
\newcommand{\kk}{\Bbbk}
\newcommand{\HH}{\mathrm{H}}
\newcommand{\Si}{\mathfrak{S}}
\newcommand{\Ext}{{\mathrm{Ext}}}

\newcommand{\PP}{\mathcal{P}}

\documentclass{amsart}

\usepackage{amssymb}


\usepackage[cmtip,all]{xy}

\usepackage{}


\newtheorem{theorem}{Theorem}[section]
\newtheorem{mproblem}[theorem]{Main Problem}
\newtheorem{problem}[theorem]{Problem}
\newtheorem{proposition}[theorem]{Proposition}

\newtheorem{lemma}[theorem]{Lemma}

\newtheorem{corollary}[theorem]{Corollary}

\theoremstyle{definition}

\newtheorem{example}[theorem]{Example}

\theoremstyle{remark}
\newtheorem{remark}[theorem]{Remark}

\numberwithin{equation}{section}

\begin{document}

\title[Cohomology with coefficients in twisted representations]{Cohomology of algebraic groups with coefficients in twisted representations}


\author{Antoine Touz\'e}
\address{Universit\'e Lille 1 - Sciences et Technologies\\
Laboratoire Painlev\'e\\
Cit\'e Scientifique - B\^atiment M2\\
F-59655 Villeneuve d'Ascq Cedex\\
France}
\curraddr{}
\email{antoine.touze@univ-lille.fr}
\thanks{This  work was supported in part by the Labex CEMPI (ANR-11-LABX-0007-01)}

\subjclass[2010]{Primary 20G10, Secondary 18G15}

\date{\today}

\begin{abstract}
This article is a survey on the cohomology of a reductive algebraic group with coefficients in twisted representations. A large part of the paper is devoted to the advances obtained by the theory of strict polynomial functors initiated by Friedlander and Suslin in the late nineties. The last section explains that the existence of certain `universal classes' used to prove cohomological finite generation is equivalent to some recent `untwisting theorems' in the theory of strict polynomial functors. We actually provide thereby a new proof of these theorems.
\end{abstract}

\maketitle

\bibliographystyle{amsplain}

\section{Introduction}

Let $G$ be an affine algebraic group (scheme) over a field $\kk$ of positive characteristic $p$. If $V$ is a rational representation of $G$, we may twist it by base change along the Frobenius morphism $\kk\to \kk$, $x\mapsto x^{p^r}$. The representation obtained is denoted by $V^{(r)}$.
This article deals with the problem of computing rational cohomology of $G$ with coefficients in such twisted representations. That is, we consider the following naive, basic question. 
\begin{center}
How different is $\HH^*(G,V^{(r)})$ from $\HH^*(G,V)$?
\end{center}
Cohomology with coefficients in twisted representations appears naturally in many problems regarding  rational cohomology of algebraic groups and the connections with other topics. Unfortunately, the naive question raised above has no easy answer. In recent years however, much effort has been made to find satisfactory partial answers. 

The first four sections of this article are a survey of the problem of computing cohomology with twisted coefficients. We describe some motivations and some important partial answers to this problem.  
As we do not assume that the reader is an expert of algebraic groups, we begin section \ref{sec-pm} by a short introduction to affine group schemes, their representations, and the associated cohomology theory. Then we present three motivations to the study of twisted representations, namely
(1) the link with the cohomology of finite groups of Lie type, (2) the study of simple representations of reductive algebraic groups, (3) cohomological finite generation theorems. This  list of motivations is not exhaustive. For example, we have left aside the connections with algebraic topology (via strict polynomial functors and unstable modules over the Steenrod algebra \cite{FFSS,Panorama}).

In sections \ref{sec-solnonfct} and \ref{sec-fct} we present classical and also more recent approaches and results regarding the cohomology of reductive group schemes with coefficients in twisted representations. In particular, section \ref{sec-fct} presents up-to-date results obtained by the functorial approach initiated by Friedlander and Suslin in \cite{FS}. Indeed, the theory of strict polynomial functors has recently brought very clean answers \cite{Chalupnik3,Tuan,TouzeUnivNew} to the naive question formulated above. In order that non-experts of strict polynomial functors can easily use these answers, we have formulated them purely in terms of (polynomial) representations of algebraic groups in section \ref{subsubsec-nofct}. Classical `group cohomologists' may  jump directly to this section to evaluate the interest of strict polynomial functors.

The last section is a bit more technical, but hopefully no less interesting. In this section we prove that the existence of some `universal cohomology classes' (which are one of the key tools in cohomological finite generation theorems \cite{TvdK}) is actually equivalent to the untwisting theorems of \cite{Chalupnik3,Tuan,TouzeUnivNew}. In particular, section \ref{subsec-sens-dur} gives a new uniform proof of all these untwisting theorems, which does not rely on the adjoint to the precomposition by the Frobenius twist. The latter is a technical tool introduced by M. Cha{\l}upnik \cite{Chalupnik3}, which was essential in the earlier proofs. Our proof also separates general arguments from statements specific to strict polynomial functors. We hope that this makes it readable for non-experts, and helps to understand where the `strict polynomial magic' really lies.

\subsubsection*{Acknowledgement} The author thanks the anonymous referee for very carefully reading a first version of the article and detecting several mistakes.

\section{Problem and motivations}\label{sec-pm}
\subsection{Group schemes and their representations}
We refer the reader to \cite{Waterhouse, Jantzen} for affine group schemes and their representations. A nice concise introduction to these topics is \cite{FriePan}. 
We recall here only a few basic definitions 	and conventions. 

Given a field $\kk$ we let $\kk-\mathrm{Alg}$ be the category of commutative finitely generated $\kk$-algebras with unit. An affine algebraic group scheme over $\kk$ is a representable functor $G$ from $\kk-\mathrm{Alg}$ to the category of groups. All group schemes in this article will be affine algebraic without further mention. A morphism of group schemes is a natural transformation $G\to G'$. Let $\kk[G]$ denote the finitely generated $\kk$-algebra representing a group scheme $G$, and let $\overline{\kk}$ be the algebraic closure of $\kk$. Then $G$ is said to be smooth, resp. connected, if  $\kk[G]\otimes_\kk \overline{\kk}$  is nilpotent-free, resp. has no nontrivial idempotent. 
For all $\kk$-vector spaces $V$, there is a functor 
$$GL_V: \kk-\mathrm{Alg}  \to  \mathrm{Groups}$$
sending a $\kk$-algebra $A$ to the group of invertible $A$-linear endomorphisms of $V\otimes_\kk A$. When $V$ has dimension $n$ this is a (smooth connected) group scheme sending $A$ to $GL_n(A)$, and we denote it by $GL_{n,\kk}$, or often by $GL_n$ when the ground field $\kk$ is clear from the context. A representation of a group scheme $G$ (or a $G$-module) is a $\kk$-vector space $V$ equipped with a natural transformation $\rho:G\to GL_V$. Such representations are also called rational representations, but we most often drop the adjective `rational'.

If $\kk$ is algebraically closed, smooth group schemes (such as $GL_n$, $SL_{n}$, $Sp_n$, $\mathrm{Spin}_n$\dots) identify with Zariski closed subgroups of some $GL_n(\kk)$. A finite dimensional rational representation of a smooth group scheme $G$ is then the same as a representation of $G$ such that the corresponding morphism $G\to GL(V)$ is a morphism of algebraic groups (i.e. a regular map). 

All the problems and results described in this paper remain hard and interesting over an algebraically closed field, the hypothesis that $\kk$ is algebraically closed does not bring any substantial simplification in the proofs. 

\subsection{Rational cohomology with coefficients in twisted representations}
We fix a field $\kk$ of positive characteristic $p>0$. The Frobenius morphism $\kk\to \kk$, $x\mapsto x^{p}$ and its iterates induce morphisms of group schemes (for $r\ge 0$)
$$\begin{array}[t]{cccc}
F^r:& GL_n & \to & GL_n\\
&[a_{ij}]& \mapsto & [a_{ij}^{p^r}]
\end{array}\;.$$
If $G$ is a subgroup scheme of $GL_n$ defined over the prime field $\mathbb{F}_p$, these morphisms restrict to $F^r:G\to G$. If $V$ is a vector space acted on by $G$ via $\rho:G\to GL_V$, we let $V^{(r)}$ denote the same vector space, acted on by $G$ via the composite $\rho\circ F^r$. The representation $V^{(r)}$ is called the \emph{$r$th Frobenius twist of $V$}, and we shall informally refer to such representations as \emph{twisted representations}.

Twisted representations play a prominent role in the representation theory of affine algebraic group schemes in positive characteristic. 
Given a group scheme $G$ and a representation $V$, we denote by $\HH^*(G,V)$ the extension groups $\Ext^*_G(\kk,V)$ between the trivial $G$-module $\kk$ and $V$, computed in the category of rational representations of $G$ (this category is abelian with enough injectives). 
The (rational) cohomology of $G$ with coefficients in twisted representations appears naturally in many situations, which motivates to study the following general problem.

\begin{mproblem}\label{mprob}
Try to understand or to compute the cohomology of $G$ with coefficients in twisted representations. In particular, if we know $H^*(G,V)$, what can we infer about $\HH^*(G,V^{(r)})$, for $r\ge 0$?
\end{mproblem}

In sections \ref{subsec-Stein}-\ref{subsec-Fin}, we give some concrete situations  where cohomology with coefficient in twisted representations naturally appear. Before this, we make some preliminary remarks regarding problem \ref{mprob}.

\begin{enumerate}
\item By definition, the representation $V^{(0)}$ equals $V$. Thus, problem \ref{mprob} is interesting for $r>0$. However, it is sometimes convenient to allow $r=0$ in order to obtain uniform statements, see e.g. the results in section \ref{subsubsec-nofct}.
\item One could ask similar questions about extensions groups between twisted representations, i.e. of the form $\Ext^*_G(V^{(r)},W^{(r)})$. This is not a more general question, since the representations $\mathrm{Hom}_\kk(V^{(r)},W^{(r)})$ and $\mathrm{Hom}_\kk(V,W)^{(r)}$ are isomorphic, so that there is a graded isomorphism:
$$\Ext^*_G(V^{(r)},W^{(r)})\simeq \HH^*(G, \mathrm{Hom}_\kk(V,W)^{(r)})\;.$$
\item Finite groups are group schemes, but problem \ref{mprob} is uninteresting for finite groups. Indeed, the Frobenius morphism $F^r:G\to G$ is then an isomorphism, thus $\HH^*(G,V^{(r)})$ is isomorphic to $\HH^*(G,V)$.
\item Problem \ref{mprob} may be asked for arbitrary affine algebraic group schemes, but in the sequel we concentrate mainly on \emph{reductive} group schemes. We refer the reader to \cite{Jantzen} for details about other kinds of group schemes.
\end{enumerate}

\begin{remark}[For experts] Our twist is denoted by `$^{[r]}$' in  \cite{Jantzen}, while `$^{(r)}$' refers there to a different way of twisting representations, defined by postcomposing $\rho$ by the action of the Frobenius morphism. However, if $V$ is defined over a subfield of $\mathbb{F}_{p^r}$ the two twists coincide: $V^{(r)}\simeq V^{[r]}$. As this is the case for most of the representations considered in this article, we have chosen to formulate all the motivations and results by using only the construction of twists defined by precomposition by the Frobenius morphism.
\end{remark}

\subsection{Cohomology of finite groups of Lie type}\label{subsec-CPSVdK}
We explain a first motivation to study problem \ref{mprob}, related to the cohomology of finite groups of Lie type.
Recall first some basic definitions. Let $\kk$ be a field with algebraic closure $\overline{\kk}$. A smooth connected group scheme $G$ is called \emph{reductive} if $G(\overline{\kk})$ does not contain any nontrivial normal connected unipotent subgroup (i.e. subgroup with unipotent elements only). It is \emph{split over $\kk$} if all its maximal connected diagonalizable subgroups are  isomorphic (over $\kk$) to a product of $n$ copies of the multiplicative group $\mathbb{G}_m$. The integer $n$ is called the rank of $G$. Typical $\kk$-split reductive group schemes of rank $n$ are the classical matrix groups $GL_n$, $SL_{n+1}$, $Sp_{2n}$ or $SO_{n,n}$ and $SO_{n,n+1}$ (which are split forms of $SO_{2n}$ and $SO_{2n+1}$, see \cite{SteinbergChev} for further details).

Let $p$ be a prime and consider a $\mathbb{F}_p$-split reductive group $G$. The functor $V\mapsto V^{(1)}$ is exact, so it induces for all $i$ a morphism
$$ H^i(G,V^{(r)})\to  H^i(G,V^{(r+1)})\;.$$
It is known that this map is injective \cite{CPS_inj} \cite[I.9.10]{Jantzen}, that is, \emph{twisting creates cohomology}. Moreover, Cline Parshall Scott and Van der Kallen discovered in \cite{CPSVdK} that this map is an isomorphism if $r$ is big enough, and that the stable value $\mathrm{colim}_r H^i(G,V^{(r)})$, which they call \emph{generic cohomology} and denote by $\HH^i_{\mathrm{gen}}(G,V)$, can be interpreted in terms of the cohomology of the finite groups of Lie type $G(\mathbb{F}_q)$.

To be more specific, for all finite extensions $\mathbb{F}_q$ of $\mathbb{F}_p$, any rational representation $V$ of $G$ yields a representation $V\otimes_{\mathbb{F}_p} \mathbb{F}_q$ of $G(\mathbb{F}_q)$. The resulting functor
$$\begin{array}{ccc} 
\{\text{Rational representations of $G$}\}&\to & \{\text{representations of  $G(\mathbb{F}_q)$}\}\\
V & \mapsto & V\otimes_{\mathbb{F}_p}\mathbb{F}_q
\end{array}$$ 
is exact, and sends $V^{(r)}$ and $V$ to the same $G(\mathbb{F}_q)$-modules provided $q|p^r$ (indeed the $r$-th iterated Frobenius morphism is then the identity of $\mathbb{F}_q$). Thus, if $q|p^r$, the $\mathbb{F}_q$ point functor induces a map
$$ H^i(G,V^{(r)})\otimes\mathbb{F}_q\to H^i(G(\mathbb{F}_q),V\otimes_{\mathbb{F}_p}\mathbb{F}_q) \;.$$
The main result in \cite{CPSVdK} asserts that this map is an isomorphism if $r$ and $q$ are big enough (with respect to $i$ and to explicit constants depending on $G$ and $V$).

\begin{example}[Quillen vanishing]\label{ex-Q-van}
Assume that $V=\kk$ is the trivial representation. Then $\kk^{(r)}=\kk$ for all $r$ so the colimit of the $H^i(G,\kk^{(r)})$ equals $H^i(G,\kk)$. Kempf vanishing theorem implies that this is zero in positive degrees. Hence, Cline Parshall Scott Van der Kallen's theorem says in this case that 
$ H^i(G(\mathbb{F}_q))=0$
for $q$ big enough with respect to $i$. This vanishing was first discovered by Quillen \cite{Quillen} in the case of the general linear group, and proofs for other classical groups can be found in \cite{FP}.
\end{example}

The trivial representations are the only ones such that $V=V^{(r)}$. Hence, to obtain  further concrete computations in the style of example \ref{ex-Q-van}, one needs to study our main problem \ref{mprob}.

\begin{remark}
The main theorem of \cite{CPSVdK} can also be formulated with more emphasis on finite groups, see section \ref{subsec-untwisting-finite}. 
\end{remark}

\subsection{Simple representations}\label{subsec-Stein}
We now describe a second motivation for problem \ref{mprob}, related to the representation theory of reductive group schemes. Let $\kk$ be a field, and let $G$ be a $\kk$-split reductive group scheme. 

Chevalley classified \cite{Chevalley} the simple representations of $G$, using the root system associated to $G$ (the procedure used to classify simple representations is called \emph{highest weight theory}). 
However, our knowledge of simple representations is far from being satisfactory in positive characteristic. For example, for classical matrix groups, we don't know the dimensions of the simple representations in general, nor can we compute the Ext-quiver of $G$.

Twisted representations naturally appear when studying simple representations of $G$.  For example, if $V$ is a nontrivial simple representation of $V$, then 
$$V, V^{(1)}, V^{(2)}, \dots, V^{(r)},\dots $$ 
is an infinite family of pairwise non isomorphic simple representations. More generally, Steinberg tensor product theorem \cite{Steinberg} \cite[II.3.17]{Jantzen} says that all simple representations are of the form 
$$V_0\otimes V_1^{(1)}\otimes \dots \otimes V_r^{(r)}$$
for some $r\ge 0$ and some simple representations $V_i$ belonging to a small set of simple representations, namely those with $p$-restricted highest weight.
\begin{example}
Take $G=SL_{n+1}$. The simple representations of $SL_{n+1}$ are in bijection with the set $\Lambda_+$ of partitions of integers in at most $n$ parts (i.e. the set of $n$-tuples $(\lambda_1,\dots,\lambda_n)$ of nonnegative integers satisfying $\lambda_1\ge \dots\ge \lambda_n$). The simple representations with $p$-restricted highest weight correspond through this bijection with the finite subset $\Lambda_1\subset \Lambda_+$ of partitions satisfying $\lambda_i-\lambda_{i+1}<p$ for $0\le i<n$ and $\lambda_n<p$. 
\end{example}

The structure of simple representations of $G$ given by the Steinberg tensor product theorem implies that any general problem involving the cohomology of simple representations of $G$ requires to study our main problem \ref{mprob}.

\subsection{Cohomological finite generation}\label{subsec-Fin}

Let $G$ be a finite group, and $\kk$ a field of positive characteristic $p$. The cohomology of $G$ with trivial coefficients $\HH^*(G,\kk)$ is then a graded $\kk$-algebra, and Evens proved \cite{Evens} that it is finitely generated. Finite generation is an important input to build a theory of support varieties, which establishes a bridge between geometry (subvarieties of $\mathrm{Spec}\, \HH^{ev}(G,\kk)$) and algebra (representations of $G$).
The development of support varieties was a motivation to understand the cohomology of the wider class of \emph{finite group schemes}. Finite group schemes are the group schemes represented by finite dimensional $\kk$-algebras. They include finite groups, enveloping algebras of restricted Lie algebras, and kernels of iterated Frobenius morphisms $F^r:G\to G$. Friedlander and Suslin proved the following theorem. 
\begin{theorem}[{\cite[Thm 1.1]{FS}}]\label{thm-FS}
Let $G$ be a finite group scheme over a field $\kk$ and $M$ a finite dimensional $G$-module. Then $\HH^*(G,\kk)$ is a finitely generated algebra, and $\HH^*(G,M)$ is a finite module over it.
\end{theorem}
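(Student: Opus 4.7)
The plan is to reduce the theorem to the case where $G$ is a Frobenius kernel $GL_{n,(r)}$ of a general linear group, and then to prove finite generation there by exhibiting explicit ``universal'' cohomology classes living on the full group $GL_n$ with Frobenius-twisted coefficients. For the reduction, any finite group scheme $G$ embeds as a closed subgroup of some $GL_n$ via its left regular representation on $\kk[G]$, and since $G$ is finite one has $G\subseteq GL_{n,(r)}$ for $r$ sufficiently large. A faithful-flatness / Frobenius reciprocity argument (using that $\kk[GL_{n,(r)}]$ is free of finite rank as a $\kk[G]$-module) then reduces both conclusions of the theorem to the corresponding ones for $G=GL_{n,(r)}$.

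For the infinitesimal group scheme $GL_{n,(r)}$, I would run the Lyndon--Hochschild--Serre spectral sequence attached to the short exact sequence of group schemes
\[
1 \longrightarrow GL_{n,(r)} \longrightarrow GL_n \xrightarrow{\;F^r\;} GL_n \longrightarrow 1\;,
\]
whose $E_2$-page is $\HH^i(GL_n, \HH^j(GL_{n,(r)},\kk))$, the outer action being via a Frobenius twist. By Kempf vanishing $\HH^{>0}(GL_n,\kk)=0$, so everything in positive total degree on the $E_\infty$-page vanishes; the positive-degree classes in $\HH^*(GL_{n,(r)},\kk)$ are therefore all forced to be killed by differentials. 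To exploit this, the key point is to construct explicit \emph{universal cohomology classes}
\[
e_r\in \HH^{2p^{r-1}}(GL_n,\mathfrak{gl}_n^{(r)})
\]
of even degree $2p^{r-1}$ (for each $r\geq 1$), whose iterated transgressions and restrictions yield a polynomial-algebra generating set of $\HH^*(GL_{n,(r)},\kk)$. Finite generation of the cohomology algebra follows.

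Once $\HH^*(GL_{n,(r)},\kk)$ is known to be a finitely generated, hence Noetherian, algebra, the statement that $\HH^*(GL_{n,(r)}, M)$ is finite over it for every finite-dimensional module $M$ follows by devissage: embed $M$ into the injective coinduced module $M\otimes \kk[GL_{n,(r)}]$ (whose higher cohomology vanishes by Shapiro's lemma), and iterate using the resulting long exact sequences and the Noetherian property of the base ring. The main obstacle is clearly the construction of the classes $e_r$: for $r=1$ one can rely on existing computations of the cohomology of the restricted enveloping algebra of $\mathfrak{gl}_n$, but for $r\geq 2$ one needs an honest geometric or extension-theoretic construction of nonzero cohomology classes with twisted coefficients --- precisely the kind of input that the strict polynomial functor machinery of the rest of this paper is designed to furnish.
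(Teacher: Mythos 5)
This theorem is quoted in the paper from [FS, Thm 1.1]; the paper itself gives no proof, only the sketch of Friedlander--Suslin's strategy in section \ref{subsec-Fin}, so your proposal has to be measured against that argument. You have correctly identified the decisive input: the universal classes $e_r\in \HH^{2p^{r-1}}(GL_n,\mathfrak{gl}_n^{(r)})$, in the right degree, are exactly the heart of [FS], and your mechanism for passing from an ambient finite group scheme to a closed subgroup (exactness of induction, $\kk[G']$ finite free over $\kk[G]$) is sound. But your global reduction fails at the first step: a finite group scheme lies inside a Frobenius kernel $GL_{n,(r)}$ only when it is \emph{infinitesimal}. The Frobenius kernels are connected --- their points over any reduced $\kk$-algebra are trivial, since $g_{ij}^{p^r}=\delta_{ij}$ forces $g=1$ there --- so no nontrivial \'etale group scheme, in particular no nontrivial constant finite group such as $\mathbb{Z}/p$, embeds in one. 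Your argument therefore says nothing about the \'etale part of the theorem, where it would have to recover Evens' theorem for finite groups. Friedlander and Suslin do not reduce to $GL_{n,(r)}$ at all: as recalled in section \ref{subsec-Fin}, they work with an arbitrary finite group scheme $G\subset GL_n$ and a spectral sequence of graded algebras with \emph{Noetherian initial page} converging to $\HH^*(G,\kk)$, and the restrictions of the $e_r$ are used to manufacture enough permanent cocycles to force stabilization.

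Even in the infinitesimal case, where the containment $G\subseteq GL_{n,(r)}$ does hold, the core of your argument is missing. Kempf vanishing makes the $E_\infty$-page of your Lyndon--Hochschild--Serre spectral sequence vanish in positive degrees, but that by itself carries no finiteness information about $\HH^*(GL_{n,(r)},\kk)$; and the assertion that transgressions of the $e_r$ yield a \emph{polynomial-algebra generating set} of $\HH^*(GL_{n,(r)},\kk)$ is false as stated --- this cohomology contains odd-degree (exterior) classes already for $r=1$, is not a polynomial algebra, and is unknown in general except for $p$ large (cf.\ theorem \ref{thm-untwistFrob}). What [FS] actually prove is that $\HH^*(G,\kk)$ is a \emph{finite module} over the subalgebra generated by characteristic classes obtained from the $e_r$, and establishing this is precisely the Noetherian-page/permanent-cocycle argument that your sketch replaces by ``finite generation follows.'' Finally, your devissage for the coefficient module $M$ does not close: the exact sequence $0\to M\to M\otimes\kk[G]\to Q\to 0$ merely replaces $M$ by another finite-dimensional module and shifts degrees indefinitely, so no induction terminates; in [FS] the module statement comes from running the same spectral-sequence argument with coefficients in $M$, whose initial page is a Noetherian module over the initial page for trivial coefficients.
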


Cohomology with coefficients in twisted representations play an important role in 
Friedlander and Suslin's proof. Let us briefly explain this in more detail. The proof follows the same basic principle as Evens' proof for finite groups. They consider a spectral sequence of graded algebras, with noetherian initial page  and converging to the desired finite group scheme cohomology $\HH^*(G,\kk)$. Then, each page of the spectral sequence is finitely generated, and to prove finite generation of $\HH^*(G,\kk)$, one needs to prove that the spectral sequence stabilizes after a finite number of pages. This is achieved by constructing sufficiently many permanent cocycles in the spectral sequence. In Evens' proof, the permanent cocycles are constructed with Even's norm map. For finite group schemes, there is no such map available. Instead, the permanent cocycles are constructed from cohomology classes living in $\HH^*(GL_n,\mathfrak{gl}_n^{(r)})$. 
The computation of this cohomology with twisted coefficients (see example \ref{ex-FScomputation} below) is a breakthrough of \cite{FS}, and solves an important case of problem \ref{mprob}.

Actually, Evens (and later Friedlander and Suslin, although this is not explicitly stated in \cite{FS}) proved a more general result. If $A$ is a commutative $\kk$-algebra acted on by $G$ by algebra automorphisms, then $\HH^*(G,A)$ is a graded commutative algebra. They prove that $\HH^*(G,A)$ is finitely generated as soon as $A$ is so. The most general version of this result was proved in \cite{TvdK}. Let $\overline{\kk}$ be the algebraic closure of $\kk$ and let $G$ be a group scheme such that the identity component of the algebraic group $G(\overline{\kk})$ is reductive. Thus $G$ may be reductive, finite, or an extension of such groups schemes. It is known from invariant theory that these group schemes are exactly the ones with the finite generation (FG) property, that is the invariant $\kk$-algebra $A^G=\HH^0(G,A)$ is finitely generated as soon as $A$ is.
\begin{theorem}[{\cite[Thm 1.1]{TvdK}}]
Let $G$ be a group scheme over $\kk$, satisfying (FG) property. Let $A$ be a finitely generated commutative algebra, acted on by $G$ by automorphisms of algebras. Then $H^*(G,A)$ is finitely generated.
\end{theorem}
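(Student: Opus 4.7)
The plan is to mimic the Friedlander--Suslin strategy used to prove Theorem~\ref{thm-FS}, now pushing it to the case where the ``reductive part'' of $G$ is infinite.

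The first step is to reduce to the case where $G$ is connected reductive. Since (FG) is preserved under extensions, $G$ sits in an exact sequence $1\to G^\circ\to G\to \pi\to 1$ with $G^\circ$ connected reductive and $\pi$ a finite group scheme; the Lyndon--Hochschild--Serre spectral sequence for this extension, combined with Evens' finite generation theorem applied degreewise on the $E_2$-page, reduces matters to $G^\circ$. A second reduction, using Grosshans' theory, replaces $A$ by a finitely generated $G$-algebra $\mathrm{hull}_\nabla(A)$ (containing $A$ $G$-equivariantly) whose associated graded is a direct sum of costandard modules $\nabla(\lambda)$. One may then run the main argument under the extra assumption that $A$ itself carries a good filtration by $G$-submodules; this is the setting in which the higher cohomology of $A$ behaves best.

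The core of the proof is then the Hochschild--Serre spectral sequence for the $r$-th Frobenius kernel $G_r\triangleleft G$ (with $r$ chosen sufficiently large),
$$E_2^{p,q}=\HH^p\bigl(G/G_r,\,\HH^q(G_r,A)\bigr)\Longrightarrow \HH^{p+q}(G,A),$$
where $G/G_r\simeq G^{(r)}$ is again reductive. By Theorem~\ref{thm-FS}, the algebra $\HH^*(G_r,A)$ is finitely generated; combined with (FG) for $G/G_r$ and a Grosshans-type input to handle higher cohomology along the filtration, one obtains that the whole $E_2$-page is a finitely generated, hence noetherian, $\kk$-algebra.

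The main obstacle --- and where twisted cohomology really enters --- is showing that the spectral sequence stabilizes after finitely many pages, i.e.\ that one can exhibit enough permanent cocycles to generate the abutment over the subalgebra of already-permanent classes. Following the pattern of \cite{FS,TvdK}, this is achieved using \emph{universal cohomology classes} $c_r\in \HH^{2p^{r-1}}(GL_N,\mathfrak{gl}_N^{(r)})$: pulled back along a closed embedding $G\hookrightarrow GL_N$ and cup-multiplied with algebra generators of $\HH^*(G_r,A)$, they kill all differentials above a fixed page. Constructing such universal classes is exactly a twisted-cohomology computation in the spirit of Main Problem~\ref{mprob}, and checking that they produce enough permanent cocycles is the principal obstacle of the proof.
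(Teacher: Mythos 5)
First, note that the paper does not actually prove this theorem: it is quoted from \cite{TvdK}, and what the paper does explain (in section \ref{subsec-Fin} and section \ref{sec-eq}) is precisely which cohomological input the proof needs. Measured against that, your sketch misidentifies the key ingredient. You propose to produce permanent cocycles from the Friedlander--Suslin classes $c_r\in \HH^{2p^{r-1}}(GL_N,\mathfrak{gl}_N^{(r)})$, i.e.\ the classes that suffice for Theorem \ref{thm-FS}. For the reductive case this is not enough: the proof requires a whole family of classes in $\HH^{2d}(GL_N,\Gamma^{d}(\mathfrak{gl}_N)^{(1)})$ for \emph{all} $d\ge 0$, compatible with the comultiplications $\Delta_{\ell,m}$ and restricting to $e_1$ for $d=1$ --- exactly the content of Theorem \ref{thm-univ-classes} and the main theorem of \cite{TouzeUniv}. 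Cup powers of $c_1$ cannot substitute for these: the composite $\Gamma^d\to\otimes^d\to S^d$ is multiplication by $d!$, so for $d\ge p$ the classes $c_1^{\cup d}$ lose exactly the divided-power information one needs to dominate the generators of $\HH^{\mathrm{even}}(G_1,\kk)$ (which is finite over a twisted symmetric algebra on $\mathfrak{g}^{\#}$) by permanent cocycles. Constructing the $\Gamma^d$-classes is the genuinely hard step, and your sketch silently assumes it away. A second gap is circularity: you claim the $E_2$-page $\HH^p(G/G_r,\HH^q(G_r,A))$ is a finitely generated algebra ``by Theorem \ref{thm-FS} combined with (FG)''. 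But (FG) only controls $p=0$; finite generation of the higher $\HH^p(G/G_r,-)$ of the finitely generated algebra $\HH^*(G_r,A)$ is an instance of the very theorem you are proving for the reductive quotient, so it cannot be invoked at this stage. The actual argument of \cite{TvdK} (building on \cite{vdKGrosshans}) instead uses the universal classes to exhibit a noetherian subring of permanent cocycles over which the relevant pages are finite modules.

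Your preliminary reduction is also not correct as stated. A group scheme with (FG) need not sit in an extension $1\to G^{\circ}\to G\to\pi\to 1$ with $G^{\circ}$ connected \emph{reductive} and $\pi$ finite: for instance Frobenius kernels, or subgroup schemes such as $G_1T\subset GL_n$, are connected but not smooth, and contain no normal connected reductive subgroup of finite index. (Also, Evens' theorem applies to finite groups, not to finite group schemes; one would need Theorem \ref{thm-FS} there.) The reduction used in \cite{TvdK} is different: embed $G\hookrightarrow GL_n$, use that $GL_n/G$ is affine for (FG) groups so that induction is exact and $\HH^*(G,A)\simeq \HH^*(GL_n,\mathrm{ind}_G^{GL_n}A)$, and use invariant theory to see that $\mathrm{ind}_G^{GL_n}A$ is again finitely generated; this reduces everything to $G=GL_n$, after which the Grosshans filtration, the $G_1$-spectral sequence and the classes of Theorem \ref{thm-univ-classes} take over. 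So while the overall Evens--Friedlander--Suslin-style strategy you describe is the right family of ideas, the two steps where the real content lies --- the correct reduction and, above all, the divided-power universal classes --- are missing or replaced by insufficient substitutes.
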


As in Friedlander and Suslin's theorem, cohomology with coefficients in twisted representations plays an important role in the proof. In particular the proof requires the construction of some cohomology classes in $\HH^*(GL_n,\Gamma^{d}(\mathfrak{gl}_n)^{(1)}\,)$, $d\ge 0$, in order to produce permanent cycles in spectral sequences. Here $\Gamma^d(\mathfrak{gl}_n)$ denotes the $d$-th divided power of $\mathfrak{gl}_n$, i.e.  $\Gamma^d(\mathfrak{gl}_n)=(\mathfrak{gl}_n^{\otimes d})^{\Si_d}$ is the subrepresentation of elements in $\mathfrak{gl}_n^{\otimes d}$ which are invariant under the action of the symmetric group (acting by permuting the factors of the tensor product). The problem of constructing non-zero classes in $\HH^*(GL_n,\Gamma^{d}(\mathfrak{gl}_n)^{(1)}\,)$ is clearly an instance of problem \ref{mprob}, but we will explain in section \ref{sec-eq} that the two problems are actually equivalent.

\section{Some results regarding the cohomology of twisted representations}\label{sec-solnonfct}
In this section $\kk$ is a field of positive characteristic $p$ and $G$ is a reductive algebraic group over $\kk$, which is assumed to be split and defined over $\mathbb{F}_p$, for example one of the classical group schemes $GL_n$, $SL_{n+1}$, $SO_{n,n}$, $SO_{n,n+1}$, $Sp_{2n}$. We describe two approaches to the cohomology of $G$ with coefficients in twisted representations. The first one is classical and described e.g. in the reference book \cite{Jantzen}, while the second one is more recent \cite{CPSt}. The content of this section is not needed in the remainder of the article. We have included this section in order to give an overview of the techniques used to attack problem \ref{mprob} without using strict polynomial functor technology. The reader can compare these important results to the results based on strict polynomial functor technology, described in section \ref{sec-fct}.

\subsection{Untwisting by using Frobenius kernels}\label{subsec-untwist-Frob}
Let $G_r$ denote the $r$-th Frobenius kernel of $G$, that is the scheme-theoretic kernel of the $r$-th iterated Frobenius $F^r:G\to G$. 
Then $G_r$ is a normal subgroup scheme of $G$ and for all $G$-module $U$, the Lyndon-Hochschild-Serre spectral sequence of the extension $G_r \to G \twoheadrightarrow G/G_r$  has the form
$$E_2^{p,q}(U)=\HH^p(G/G_r, \HH^q(G_r,U))\;\Rightarrow\; \HH^{p+q}(G,U)\;.$$

If $U=V^{(r)}$, this spectral sequence can be used to `untwist' the action, and to obtain some information on $\HH^*(G,V^{(r)})$ from cohomology with untwisted coefficients. Indeed, the quotient $G/G_r$ fits into a commutative diagram
$$\xymatrix{
G\ar[r]^-{F^r}\ar@{->>}[rd]^-{\pi}& G\ar[d]^-{\simeq}\\
& G/G_r}\;.$$  
In particular, for all $G$-modules $W$ the following three statements are equivalent:
\begin{enumerate}
\item[(i)] $G_r$ acts trivially on $W$
\item[(ii)] There is a $G/G_r$-module $V$ such that $\rho_W=\rho_V\circ \pi$.  
\item[(iii)] There is a $G$ module $W^{(-r)}$ such that $W=(W^{(-r)})^{(r)}$.
\end{enumerate}
Thus the LHS spectral sequence can be rewritten as
$$E_2^{p,q}(V^{(r)})=\HH^p(G, \HH^q(G_r,\kk)^{(-r)}\otimes V)\;\Rightarrow\; \HH^{p+q}(G,V^{(r)})\;.\qquad (*)$$

The cohomology of $\HH^*(G_r,\kk)$ is not known in general, and the differentials of the spectral sequence may very well be nonzero. However, one can overcome these difficulties when $r=1$ and $p$ is big enough (with some explicit bound, e.g. for $G=SL_n$ or $GL_n$ then $p>n$ is big enough). Under these hypotheses Friedlander and Parshall \cite{FPar1,FPar2} and Andersen and Jantzen \cite{AJ} determined $\HH^*(G_1,\kk)^{(-1)}$. As a very nice feature, this $G$-module has a `good filtration', in particular it has no higher $G$-cohomology. If in addition $V$ also has a good filtration then the  second page of the LHS spectral sequence satisfies $E^{p,q}(V^{(r)})=0$ for all positive $p$, hence the spectral sequence collapses and one obtains the following result.
\begin{theorem}\label{thm-untwistFrob}
Assume that $G$ is a reductive group and that $p>h$ ($h$ denotes the Coxeter number of $G$, if $G=GL_n$ or $SL_n$, $h=n$). If $V$ has a good filtration, there is an isomorphism:
$$ \HH^0(G,\HH^q(G_1,\kk)^{(-1)}\otimes V)\simeq \HH^{q}(G,V^{(1)})\;.$$
\end{theorem}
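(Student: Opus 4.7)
The plan is to exploit the Lyndon--Hochschild--Serre spectral sequence $(*)$ already displayed above the statement, and to show that under the hypothesis $p>h$ the $E_2$-page is concentrated in the row $p=0$. Once this is done, the spectral sequence degenerates and yields exactly the desired isomorphism
$\HH^0(G,\HH^q(G_1,\kk)^{(-1)}\otimes V)\simeq \HH^q(G,V^{(1)})$
as an edge map.

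To show the vanishing of $E_2^{p,q}(V^{(1)})$ for $p>0$, I would proceed in three steps. First, I would quote the Friedlander--Parshall and Andersen--Jantzen computation of $\HH^\bullet(G_1,\kk)^{(-1)}$: under the assumption $p>h$, this $G$-module is identified with the coordinate ring of the nilpotent cone (in even degrees) and, crucially, it admits a good filtration as a rational $G$-module. Second, since $V$ is assumed to have a good filtration, I would invoke the theorem that a tensor product of two rational $G$-modules with good filtrations again admits a good filtration; for reductive $G$ this is Mathieu's theorem, but in the range $p>h$ it is already accessible through the earlier work of Donkin and Wang, so no appeal to the hardest results is strictly necessary. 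Third, modules with a good filtration have vanishing higher $G$-cohomology (Kempf vanishing applied to each Weyl--dual piece of the filtration), which gives $\HH^p(G,\HH^q(G_1,\kk)^{(-1)}\otimes V)=0$ for all $p>0$.

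With these three inputs, the spectral sequence $(*)$ degenerates at $E_2$ onto the single row $p=0$, and the edge identification $\HH^q(G,V^{(1)})\simeq E_2^{0,q}$ furnishes the claimed isomorphism. The main obstacle, and the only nontrivial arithmetic constraint, is the hypothesis $p>h$: it is needed both for the explicit description of $\HH^*(G_1,\kk)^{(-1)}$ as a module with a good filtration (the Friedlander--Parshall/Andersen--Jantzen computation genuinely breaks down for small $p$) and, historically, to guarantee that tensor products of good-filtration modules behave well. Every other step is formal once these inputs are in place.
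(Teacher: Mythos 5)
Your argument is exactly the one the paper gives: collapse of the LHS spectral sequence $(*)$ via the Friedlander--Parshall/Andersen--Jantzen description of $\HH^*(G_1,\kk)^{(-1)}$ as a module with a good filtration, combined with the good-filtration tensor product and vanishing of higher $G$-cohomology for such modules. The proposal is correct and matches the paper's proof, with the only (harmless) addition being your explicit mention of Mathieu/Donkin/Wang for the tensor product step.
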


The approach to the computation of cohomology with twisted coefficients described above can be adapted to slightly more general $G$-modules as coefficients, as in \cite[Prop 4.8]{AJ}. Also the dimensions of the $\HH^{q}(G,V^{(1)})$ can be computed by explicit combinatorial formulas. As it is done in \cite{AJ}, it is also possible to study some special cases when $r=2$ or for $p= h$ or $p=h-1$. However, all this requires nontrivial technical work (analysis of weights or differentials in the spectral sequence) and cannot be pursued very far to obtain complete computations of $\HH^*(G,V^{(r)})$.
 
\begin{remark}\label{rk-autres-util}
Although the technique of untwisting by using Frobenius kernels reaches quickly its limits for complete computations, it can still be very useful to obtain qualitative information on twisted representations. For example, it plays an important role in the proof of cohomological finite generation for reductive algebraic groups, see \cite{vdKGrosshans,TvdK}. As another example, by carefully analysing the weights of the $G$-modules appearing at the second page of the LHS spectral sequence, it is sometimes possible to detect vanishing zones in $E_2^{*,*}$, therefore leading to vanishing results for $H^j(G,V^{(r)})$ as in \cite[Thm 5.2]{CPSt}. See theorem \ref{thm-vanish} below.
\end{remark}

\subsection{Untwisting by using finite groups of Lie type}\label{subsec-untwisting-finite}
Let us come back to Cline Parshall Scott van der Kallen's comparison theorem \cite{CPSVdK}. In section \ref{subsec-CPSVdK}, we presented this theorem as an interpretation of $\mathrm{colim}_r\HH(G,V^{(r)})$. The reader primarily interested in finite groups might prefer the following alternative presentation. Take $\kk=\overline{\mathbb{F}_p}$ and let $G=G(\kk)$ be a reductive algebraic group defined and split over $\mathbb{F}_p$ (e.g. $G=SL_{n+1}$). Let $V$ be a rational representation of $G(\kk)$. All the finite subgroups $G(\mathbb{F}_q)$ act on $V$, and the generic cohomology of $V$ is 
$$\HH^i_{\mathrm{gen}}(G,V)=\lim_q \HH^i(G(\mathbb{F}_q),V)\;.$$ 
The main theorem of \cite{CPSVdK} asserts that $\HH^i_{\mathrm{gen}}(G,V)=\HH^i(G(\mathbb{F}_q),V)$ for $q$ big enough, and moreover this is equal to the rational cohomology $\HH^i(G,V^{(r)})$ for some big enough $r$. 

Now assume that we are interested in finite group cohomology. We have a finite group $G(\mathbb{F}_q)$ and a rational $G$-module $V$, and we wish to compute $H^*(G(\mathbb{F}_q),V)$ by using CPSvdK comparison theorem and the rational cohomology of $G$. Then we face two practical problems:
\begin{enumerate}
\item We need to understand $\mathrm{colim}_r\HH^*(G,V^{(r)})$ rather than $\HH^*(G,V)$ (this is our main problem \ref{mprob}).
\item Maybe our field $\mathbb{F}_q$ is not big enough in order that $\mathrm{colim}_r\HH^*(G,V^{(r)})$ is isomorphic to $\HH^*(G(\mathbb{F}_q),V)$.
\end{enumerate}
A recent article of Parshall, Scott and Stewart \cite{CPSt} solves both problems at the same time. Their result applies when $V$ is a simple $G$-module or more generally when $V=\mathrm{Hom}_\kk(U,U')$ for simple $G$-modules $U$, $U'$. For the sake of simplicity, we explain their method when $V$ is simple. The results needs further mild restrictions on the reductive group $G$, namely that $G$ is simply connected and semisimple (e.g. $G=SL_{n+1}$), so we assume $G$ satisfies these restrictions in this section.  

The approach of \cite{CPSt} can be decomposed in two steps.
The first step uses the untwisting technique of section \ref{subsec-untwist-Frob} as explained in remark \ref{rk-autres-util} to prove a cohomological vanishing result. To state this result, recall from section \ref{subsec-Stein} that $V$ decomposes as a tensor product 
$$V=V_0\otimes V_1^{(1)}\otimes \dots\otimes V_{k}^{(k)}\;.$$
where the $V_i$ are simple modules with $p$-restricted highest weight. Since $G$ is semi-simple the decomposition is unique. We call it the \emph{Steinberg decomposition of $V$} in the sequel.

\begin{theorem}[{\cite[Thm 5.2]{CPSt}}]\label{thm-vanish}
For all $i$ there exists an (explicit) integer $d=d(G,i)$ such that for all $V$ with more than $d$ nontrivial simple factors $V_i$ in their Steinberg decomposition, and all $j\le i$ one has:
$$\HH^j(G,V^{(r)})=0=\HH^j(G,V)\;.$$ 
\end{theorem}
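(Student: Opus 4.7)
The plan is to follow the strategy sketched in Remark \ref{rk-autres-util}: apply the LHS spectral sequence for the first Frobenius kernel and locate a vanishing region on the $E_2$-page by weight analysis. First observe that the Frobenius twist $V\mapsto V^{(r)}$ preserves the number of nontrivial factors in the Steinberg decomposition: if $V=V_0\otimes V_1^{(1)}\otimes\cdots\otimes V_k^{(k)}$, then the Steinberg decomposition of $V^{(r)}$ is simply obtained by padding on the left with trivial factors, and its $p$-restricted factors are exactly $V_0,\dots,V_k$. It therefore suffices to prove that $\HH^j(G,V)=0$ for $j\le i$ whenever $V$ has more than $d(G,i)$ nontrivial Steinberg factors; the vanishing of $\HH^j(G,V^{(r)})$ follows by applying the same statement to $V^{(r)}$.

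The main input is the LHS spectral sequence associated to $G_1\hookrightarrow G\twoheadrightarrow G/G_1\simeq G$. Because $G_1$ acts trivially on every twisted factor $V_j^{(j)}$ with $j\ge 1$, the $E_2$-page simplifies to
$$E_2^{p,q}(V)=\HH^p\bigl(G,\,\HH^q(G_1,V_0)^{(-1)}\otimes V_1\otimes V_2^{(1)}\otimes\cdots\otimes V_k^{(k-1)}\bigr).$$
Iterating with the LHS spectral sequences for further Frobenius kernels $G_s\subset G$ produces, at each stage, a tensor product of the remaining ``shifted'' Steinberg factors with small modules of the form $\HH^{q_\ell}(G_1,V_\ell)^{(-1)}$, whose weight supports are controlled by the classical computations of Friedlander--Parshall \cite{FPar1,FPar2} and Andersen--Jantzen \cite{AJ}: they lie in a bounded region whose diameter is explicit in $q_\ell$ and the Coxeter number $h$.

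The decisive step is the weight analysis. The highest weight of the ``tail'' $V_1\otimes V_2^{(1)}\otimes\cdots\otimes V_k^{(k-1)}$ equals $\lambda_1+p\lambda_2+\cdots+p^{k-1}\lambda_k$, so each nontrivial factor $V_j$ contributes an honest $p$-adic digit of $p$-restricted highest weight. Having more than $d$ such contributions forces every composition factor of the tensor product to have highest weight deep inside a region where $G$-cohomology vanishes in low degrees. Invoking a standard vanishing principle---of the type ``$\HH^p(G,W)=0$ below an explicit bound depending on how far the composition factors of $W$ sit from the origin of the weight lattice''---and plugging this back into the iterated spectral sequences gives $E_2^{p,q}(V)=0$ for all $p+q\le i$, provided $d(G,i)$ is chosen large enough. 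The abutment of the spectral sequences yields $\HH^j(G,V)=0$ for $j\le i$, as required.

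The main obstacle I anticipate is the precise bookkeeping needed to produce an \emph{explicit} constant $d(G,i)$: the weights of the cohomological factors $\HH^{q_\ell}(G_1,V_\ell)^{(-1)}$ grow with $q_\ell$, and one must balance these against the $p$-adic growth coming from the nontrivial Steinberg factors, all the way up $q_1+\cdots+q_\ell\le i$. A secondary technical difficulty is that the Andersen--Jantzen formulas are cleanest only when $p>h$; treating small primes requires qualitative weight bounds in place of the explicit formulas, which is doable but adds a further layer of bookkeeping and is presumably responsible for the ``mild restrictions on $G$'' mentioned by Parshall--Scott--Stewart.
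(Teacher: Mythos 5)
A point of context first: the paper does not prove Theorem \ref{thm-vanish} at all --- it is quoted from Parshall--Scott--Stewart \cite[Thm 5.2]{CPSt}, and the survey only indicates the strategy (Remark \ref{rk-autres-util}): analyse weights on the $E_2$-page of the LHS spectral sequences for Frobenius kernels of section \ref{subsec-untwist-Frob} to detect a vanishing zone. Your outline follows that announced strategy, and your opening reduction is fine: twisting only pads the Steinberg decomposition with trivial factors, so it suffices to treat $\HH^j(G,V)$, and your first $E_2$-page identification is correct because $G_1$ acts trivially on the twisted part of the tensor product.

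However, your decisive step rests on a principle that is false as stated, and this is a genuine gap. You invoke a ``standard vanishing principle'' of the form: $\HH^p(G,W)=0$ in low degrees once the composition factors of $W$ sit far from the origin of the weight lattice. Twisting never destroys cohomology --- the maps $\HH^i(G,V^{(r)})\to\HH^i(G,V^{(r+1)})$ are injective (\cite{CPS_inj}, \cite[I.9.10]{Jantzen}, recalled in section \ref{subsec-CPSVdK}) --- so, for instance, for $SL_2$ the simple modules $L(2p-2)^{(r)}$ have arbitrarily large highest weight yet all have $\HH^1\neq 0$. Such examples have a single nontrivial Steinberg factor, which is exactly why the correct hypothesis in the theorem is the \emph{number} of nontrivial restricted layers, not the size of the weight; so the heart of the proof cannot be outsourced to a weight-size vanishing black box. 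What is actually needed is an inductive argument showing that each nontrivial restricted factor costs cohomological degree: e.g.\ $V_\ell^{G_1}=0$ for a nontrivial $p$-restricted simple (its restriction to $G_1$ stays simple), so the $q=0$ row dies at each untwisting stage, combined with explicit bounds on the weights occurring in $\HH^q(G_1,-)$ as a function of $q$ to control the interaction with the remaining tail --- and that bookkeeping \emph{is} the content of \cite[Thm 5.2]{CPSt}, not a known lemma one may cite. Two further inaccuracies in the same step: after one untwisting, $G_1$ acts nontrivially on both $\HH^q(G_1,V_0)^{(-1)}$ and $V_1$, so the clean splitting you display does not iterate verbatim; and the Friedlander--Parshall and Andersen--Jantzen computations \cite{FPar1,FPar2,AJ} concern $\HH^*(G_1,\kk)$ (and induced modules) for $p>h$, not $\HH^*(G_1,V_\ell)$ for an arbitrary restricted simple $V_\ell$, so the weight bounds you need are not found in those references.
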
 
Using induction from $G(\mathbb{F}_q)$ to $G$ and relying on the filtration of $\mathrm{Ind}_{G(\mathbb{F}_q)}^G(\kk)$ provided in \cite{BNP}, one can then deduce a similar vanishing result \cite[Thm 5.4]{CPSt} for $\HH^j(G(\mathbb{F}_q),V)$, which is valid  even when $q$ is not big enough to apply CPSvdK comparison theorem. 

The second step studies the remaining cases, i.e. when $V$ is a tensor product of a small (i.e. $\le d$) number of nontrivial twisted simple representations of $G$. In these remaining cases, comparison with $G(\mathbb{F}_q)$ cohomology can be used to untwist the coefficients in the following way. 
First, for all $e\ge 0$ the Frobenius maps $F^e:G\to G$ induce isomorphisms of the finite groups $G(\mathbb{F}_q)$ hence $\HH^*(G(\mathbb{F}_q),V^{(e)})$ does not depend (up to isomorphism) on the twisting $e$. 
Second, all the $V^{(e)}$ are simple $G(\mathbb{F}_q)$-modules, but they need not be isomorphic in general. Of course if $q=p^u$, $F^u$ is the identity map on $G(\mathbb{F}_q)$, hence $V^{(u)}=V$. Thus Frobenius twisting induces a $\mathbb{Z}/u\mathbb{Z}$-action on the set of simple representations of $G(\mathbb{F}_q)$. 
\begin{example}\label{ex-cycl-action}
Take $u=5$ and $V=V_0\otimes V_2^{(2)}$, with $V_0$ and $V_2$ non-isomorphic. Then the orbit of $V$ under the action of $\mathbb{Z}/5\mathbb{Z}$ is:
$$ V_0\otimes V_2^{(2)}\longrightarrow V_0^{(1)}\otimes V_2^{(3)} \longrightarrow V_0^{(2)}\otimes V_2^{(4)} \longrightarrow V_2\otimes V_0^{(3)} \longrightarrow V_2^{(1)}\otimes V_0^{(4)}\;. $$
\end{example}
Moreover, the simple $G(\mathbb{F}_q)$-representations in the orbit of $V$ under the action of $\mathbb{Z}/u\mathbb{Z}$ can be interpreted as restrictions of the simple $G$-modules $V^{(e)}$, but one can also interpret each of them in a unique way as the restriction of a simple $G$-module whose tensor product decomposition $(**)$ involves only $k$-th twisted $G$-modules for $k< u$. 
These simple $G$-modules are called the \emph{$q$-shifts of $V$} in \cite{CPSt}.
 \begin{example}
In example \ref{ex-cycl-action}, the $q$-shifts of $V$ are: $V$, $V^{(1)}$, $V^{(2)}$, $W$ and $W^{(1)}$, where $W=V_2\otimes V_0^{(3)}$. Note that \emph{as a $G$-module}, $W$ is not isomorphic to a Frobenius twist of $V$.
\end{example}

Now since $V$ has a small number $d$ of nontrivial factors, it is possible to find a (computable) bound $u_0=u_0(G,i)$ big enough with respect to $d$, such that for all $u\ge u_0$ and $q=p^u$, and for all $V$ with $p^u$-restricted highest weight, there is a $q$-shift $V'$ whose tensor product decomposition $(**)$ is such that: (i) it starts with a long enough chain of trivial simple representations and (ii) it ends with a long enough chain of trivial representations. Condition (i) ensures that $V'=W^{(s)}$ is already twisted enough to apply CPSvdK theorem without further twisting, while condition (ii) ensures that $\mathbb{F}_q$ is big enough to apply CPSvdK theorem. This, together with theorem \ref{thm-vanish} imply the following theorem.
\begin{theorem}[{\cite[Thm 5.8]{CPSt}}]\label{thm-untwistFq}
Let $G$ be a semi-simple group scheme, simply connected, split and defined over $\mathbb{F}_p$. Then there exists a nonnegative integer $u_0(G,i)$ such that for all $u\ge u_0$ and $q=p^u$, the following holds. 
For all simple $G$-modules $V$ with $p^u$-restricted highest weight (i.e. involving only $k$-th Frobenius twists for $k<u$ in their Steinberg decomposition), one can find a $q$-shift $V'$ such that for all $j\le i$, there is a chain of isomorphisms:
$$\HH^j(G(\mathbb{F}_q),V)\simeq \HH^j(G(\mathbb{F}_q),V')\simeq \HH_{\mathrm{gen}}^j(G,V')\simeq \HH^j(G,V')\;.$$
\end{theorem}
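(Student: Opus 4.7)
The plan is to follow the two-step strategy already outlined in the preceding discussion: the vanishing theorem disposes of simple modules with many nontrivial Steinberg factors, while a cyclic rotation of the Steinberg decomposition handles the remaining ``small'' case.

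First I would split on the number $n$ of nontrivial factors appearing in the Steinberg decomposition $V=V_0\otimes V_1^{(1)}\otimes\cdots\otimes V_{u-1}^{(u-1)}$ of $V$. If $n>d(G,i)$, where $d(G,i)$ is the constant of Theorem \ref{thm-vanish}, then $\HH^j(G,V)$ vanishes for $j\le i$ by that theorem, and $\HH^j(G(\mathbb{F}_q),V)$ vanishes by its finite-group analogue \cite[Thm 5.4]{CPSt} (obtained by inducing from $G(\mathbb{F}_q)$ to $G$ using the filtration of $\mathrm{Ind}_{G(\mathbb{F}_q)}^G(\kk)$ of \cite{BNP}). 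One may then take $V':=V$ (the trivial $q$-shift), and every term in the displayed chain of isomorphisms is zero.

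Second, suppose $n\le d(G,i)$. Since $F^u$ acts as the identity on $G(\mathbb{F}_q)$, cyclically permuting the Steinberg slots by $k$, i.e.\ $(V_0,\dots,V_{u-1})\mapsto(V_{u-k},\dots,V_{u-k-1})$ with indices mod $u$, produces a family of $q$-shifts $V^{\{k\}}$, each of which restricts to a $G(\mathbb{F}_q)$-module isomorphic to the twist $V^{(k)}$; in particular $\HH^j(G(\mathbb{F}_q),V)\simeq\HH^j(G(\mathbb{F}_q),V^{\{k\}})$ for every $k$. Since at most $d(G,i)$ of the $u$ Steinberg slots of $V$ are nontrivial, a pigeonhole argument yields a $k$ such that the nontrivial slots of $V^{\{k\}}$ cluster in a middle window, leaving a prefix of trivial slots of length at least $s$ and a suffix of trivial slots of length at least $t$, for any prescribed $s,t$ provided $u\ge u_0(G,i):=d(G,i)+s+t$. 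Setting $V':=V^{\{k\}}$, the prefix lets us write $V'=W^{(s)}$, and the suffix forces the highest weight of $W$ to lie in an interval depending only on $d(G,i)$.

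With $V'$ so chosen, the three isomorphisms fall out. The first one is the $q$-shift isomorphism just established. The second is an instance of the Cline Parshall Scott van der Kallen comparison theorem: the a priori bound on the highest weight of $W$ (from the trivial suffix) ensures, for $j\le i$, that $q=p^u$ is already large enough to make $\HH^j(G(\mathbb{F}_q),V')$ coincide with $\HH^j_{\mathrm{gen}}(G,V')$. The third is the stabilization part of generic cohomology, namely that $\HH^j(G,V')\to\HH^j_{\mathrm{gen}}(G,V')$ is an isomorphism once $V'$ is itself sufficiently twisted; here this is guaranteed by the prefix length $s$, chosen to exceed the CPSvdK stability bound for modules whose highest weight lies in the aforementioned interval. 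The main obstacle is bookkeeping: one has to pick $s$ and $t$, and hence $u_0(G,i)$, so that both CPSvdK bounds --- the one controlling how large $q$ must be and the one controlling how twisted the coefficients must be --- are satisfied simultaneously, using the uniform bound $n\le d(G,i)$ to make the pigeonhole estimate independent of $V$.
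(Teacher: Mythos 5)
Your two-step skeleton (vanishing when the Steinberg decomposition has more than $d(G,i)$ nontrivial factors; cyclic $q$-shifting plus the CPSvdK comparison otherwise) is exactly the route sketched in the paper before the statement, and the first step, as well as the identification $\HH^j(G(\mathbb{F}_q),V)\simeq \HH^j(G(\mathbb{F}_q),V')$ for any $q$-shift $V'$, is fine. The quantitative core of the second step, however, rests on a false combinatorial claim. Having at most $d=d(G,i)$ nontrivial slots among the $u$ cyclic slots does \emph{not} allow you to rotate them into a ``middle window'': take $u$ large and nontrivial slots at positions $0$ and $\lfloor u/2\rfloor$; no $q$-shift brings these two slots within a bounded distance of each other. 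All the pigeonhole argument gives is one cyclic run of trivial slots of length at least $(u-d)/d$, which is what is actually needed (the trivial suffix of length $t$ and trivial prefix of length $s$ are cyclically adjacent, so you need a single trivial run of length $s+t$), and this forces $u_0$ of the order of $d(s+t)$ rather than $d+s+t$. That part is repairable.

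What is not repairable inside your argument is the use you make of the clustering claim: it is what lets you assert that the highest weight of $W$ (where $V'=W^{(s)}$) lies in a range depending only on $d(G,i)$, and that assertion is what allows you to choose $s$ and $t$ --- hence $u_0$ --- depending only on $(G,i)$ when invoking the CPSvdK bounds, since in the original CPSvdK theorem both the required amount of twisting and the required field size depend on the highest weight of the coefficient module. Without clustering, the nontrivial factors of $W$ may be spread over the whole window between prefix and suffix, so its highest weight can be of size roughly $p^{u-s-t}$; the required $s$ and $t$ would then grow with $u$ at least as fast as the available trivial run, and the choice of $u_0(G,i)$ becomes circular. Breaking exactly this circularity --- comparison and stabilization statements in degrees $\le i$ with bounds independent of the highest weight of the ``middle'' part, given only trivial prefix and suffix of controlled length --- is the technical heart of \cite{CPSt} (this is what ``shifted generic cohomology'' is about), together with the vanishing theorem; it cannot be recovered from the original CPSvdK theorem plus pigeonhole bookkeeping. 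So your outline matches the paper's, but the step you dismiss as ``bookkeeping'' is precisely where the real work lies, and as written it is not correct.
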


This theorem removes the necessity of understanding Frobenius twists to compute finite group cohomology from rational cohomology of $G$, as well as problems with the size of the field. 
If one is interested in rational cohomology of group schemes rather than in the cohomology of finite groups, Then by taking $r$ and $q'$ big enough (both depending on $V$ and with $q'\ge q$ with $q$ as in theorem \ref{thm-untwistFq}), one obtains a chain of isomorphisms for all $j\le i$:
\begin{align*}\HH^j(G,V^{(r)})\simeq \HH_{\mathrm{gen}}^j(G,V)&\simeq \HH^j(G(\mathbb{F}_{q'}),V)\\&\simeq \HH^j(G(\mathbb{F}_{q'}),V')\simeq \HH_{\mathrm{gen}}^j(G,V')\simeq \HH^j(G,V')\;.
\end{align*}
One may draw two opposite conclusions from such a chain of isomorphisms. On the one hand, one may think that it is possible to avoid studying further our main problem \ref{mprob} when coefficients are simple representations. On the other hand, this can be seen as an additional motivation to study problem \ref{mprob}. Indeed, $\HH^*(G,V')$ is mysterious in general, and understanding some general properties of high Frobenius twists of $V$ might bring some interesting new information on $V'$. 

\section{A functorial approach to the cohomology with twisted coefficients}\label{sec-fct}

\subsection{Polynomial representations of $GL_n$ and strict polynomial functors}\label{subsubsec-pol}
We now give a quick introduction to polynomial representations and strict polynomial functors. For the sake of simplicity, we make the assumption (in this section \ref{subsubsec-pol} only)  that the ground field $\kk$ is infinite of arbitrary characteristic. We refer the reader to \cite[Sections 2 and 3]{FS}, \cite{TouzeClass} or \cite{Krause} for a presentation which is valid over an arbitrary field (or even more generally over an arbitrary commutative ring). 

Since $\kk$ is infinite, finite dimensional rational representations of $GL_n$ identify with group morphisms $\rho:GL_n(\kk)\to GL(V)\simeq GL_m(\kk)$, $[a_{i,j}]\mapsto [\rho_{k,\ell}(a_{i,j})]$ whose coordinates functions may be written in the form 
$$\rho_{k,\ell}(a_{i,j})=\frac{P_{k,\ell}(a_{i,j})}{\det([a_{i,j}])^{\alpha_{k,\ell}}}$$
where $P_{k,\ell}(a_{i,j})$ is a polynomial in the $n^2$-variables $a_{i,j}$ and $\alpha_{k,\ell}$ is a nonnegative integer. Thus the coordinate functions are \emph{rational functions} of the matrix coordinates $a_{i,j}$. The rational representation 
$(V,\rho)$ is called polynomial (resp. of degree $d$) if all the $\rho_{k,\ell}$ are polynomial functions (resp. of degree $d$). Finally, infinite dimensional polynomial representations of degree $d$ are the rational representations which can be written as a union of finite dimensional subrepresentations which are polynomial of degree $d$.

\begin{example}\label{ex-NSGT}
The defining representation $V=\kk^n$ of $GL_n$ is polynomial (of degree $1$), its $d$-th tensor power $V^{\otimes d}$ is polynomial (of degree $d$) its $d$-th symmetric power  $S^d(V)=(V^{\otimes d})_{\Si_d}$ is polynomial (of degree $d$) and its $d$-th divided power $\Gamma^d(V)=(V^{\otimes d})^{\Si_d}$ is polynomial (of degree $d$) as well.
\end{example}

Strict polynomial functors can be thought of as a natural way to generalize example \ref{ex-NSGT}. To be more specific, let $\mathcal{V}_\kk$ be the category of vector spaces over $\kk$ and $\mathcal{V}^f_\kk$ its subcategory of finite dimensional vector spaces. A functor $F:\mathcal{V}^f_\kk\to \mathcal{V}_\kk$ is called strict polynomial of degree $d$ if for all vector spaces $U$, $V$, the coordinate functions of the map 
$$\begin{array}{cccc}
F_{U,V}:&\mathrm{Hom}_\kk(U,V) &\to &\mathrm{Hom}_\kk(F(U),F(V)) \\
&f & \mapsto & F(f)
\end{array}$$
are polynomial functions of degree $d$. 
\begin{remark}
Strict polynomial functors are simply called `polynomial functors' in \cite{MacDonald}, and it is not clear from the definition why the word `strict' should be used. The reason is that the notion of a polynomial functor was already defined by Eilenberg and Mac Lane much before \cite{EML}. Strict polynomial functors are polynomial in the sense of Eilenberg-Mac Lane, but the converse is not true, which justifies the word `strict'. 
\end{remark}

For $V=\kk^n$, the map $F_{V,V}$ restricts to a polynomial action $\rho:GL_n(\kk)\to GL_\kk(F(V))$ on $F(V)$. The polynomial representations of example \ref{ex-NSGT} are obtained in this way from the degree $d$ strict polynomial functors $\otimes^d:V\mapsto V^{\otimes d}$, $S^d:V\mapsto S^d(V)$ and $\Gamma^d:V\mapsto \Gamma^d(V)$. Let us denote by $\mathcal{P}_\kk$ the category of strict polynomial functors of finite degree and natural transformations. Evaluation on $V=\kk^n$ yields an exact functor
$$\mathrm{ev}_{\kk^n}\;:\;\PP_\kk\to \text{Rational representations of $GL_n$} \;.$$
The image of $\mathrm{ev}_{\kk^n}$ consists exactly of the polynomial representations. Friedlander and Sulin proved \cite[Cor 3.13]{FS} that the graded map induced by evaluation:
$$ \Ext^*_{\PP_\kk}(F,F')\to  \Ext^*_{GL_n}(F(\kk^n),F'(\kk^n))$$
is an isomorphism provided $n$ is greater or equal to $\deg F$ and $\deg F'$. As shown in \cite{FS}, it is often easier in practice to compute extensions between polynomial representations by computing them inside $\PP_\kk$ than by computing them in the category of rational $GL_n$-modules. For example, the following fundamental computation is out of reach of the untwisting techniques described in sections \ref{subsec-untwist-Frob} and \ref{subsec-untwisting-finite}. 
\begin{example}\label{ex-FScomputation}
Let $\kk$ be a field of positive characteristic $p$, and let $E_r$ denote the graded truncated polynomial algebra:
$$E_r=S^*(e_1,\dots,e_r)\big/(e_1^p=\dots = e_r^p=0)$$
where the generators $e_i$ are homogeneous of degree $2p^{i-1}$. In particular, as a graded vector space one has:
$$E_r^i =\begin{cases} \kk & \text{if $i$ is even and $0\le i < 2p^r$}\\
0 & \text{otherwise}\end{cases}\;.$$
By using strict polynomial functors, Friedlander and Suslin showed in \cite[Thm 4.10]{FS} that for all $r$ and all $n\ge p^r$, there is an isomorphism of graded algebras:
$$\HH^*(GL_n,\mathfrak{gl}_n^{(r)})=\Ext^*_{GL_n}(\kk^{n\,(r)},\kk^{n\,(r)})\simeq E_r \;.$$
\end{example}

\begin{remark}[Strict polynomial functors versus modules over Schur algebras]
Let $\PP_{d,\kk}$ be the full subcategory of $\PP_{\kk}$ whose objects are the strict polynomial functors which are homogeneous of degree $d$ . Then $\PP_{d,\kk}$ is isomorphic to the category of modules over the Schur algebra $S(n,d)=\mathrm{End}_{\kk\Si_d}((\kk^n)^{\otimes d})$ if $n\ge d$. Modules over the Schur algebra are a classical subject of representation theory, which goes back to the work of Schur \cite{Schur} in characteristic zero, and which is studied in Green's book \cite{Green} in positive characteristic. Thus one may prefer to do $\Ext$-computations in the classical category of $S(n,d)$-Modules rather than in the more unusual category $\PP_{d,\kk}$ of strict polynomial functors. This is not a good idea, for many computations in $\PP_\kk$ ultimately rely on composing functors. Such a composition operation makes sense with strict polynomial functors, but is harder to define and manipulate in categories of modules over Schur algebras.
\end{remark}

\subsection{Frobenius twists}
Let $\kk$ be a field of positive characteristic $p$. The $r$-th Frobenius twist functor $I^{(r)}$ is a strict polynomial functor of homogeneous degree $p^r$, which may be described as the subfunctor of $S^{p^r}$ such that for each finite dimensional vector space $V$, $I^{(r)}(V)$ is the subspace of $S^{p^r}(V)$ spanned by all $p^r$-th powers $v^{p^r}$, with $v\in V$. 
Precomposition by $I^{(r)}$ is the functorial version of taking the $r$-th Frobenius twist of a representation of $GL_n$. Indeed, if $F$ is a strict polynomial functor of degree $d$, the composition $F\circ I^{(r)}$ is a strict polynomial functor of degree $dp^r$, and evaluating on $\kk^n$ yields an isomorphism of $GL_n$-modules:
$$\mathrm{ev}_{\kk^n} (F\circ I^{(r)})\simeq  (\mathrm{ev}_{\kk^n} F)^{(r)}\;.$$
Composite functors of the form $F\circ I^{(r)}$ are often denoted suggestively by $F^{(r)}$ and we shall informally refer to such functors as `twisted functors'. Problem \ref{mprob} translates as follows in the realm of strict polynomial functors.
\begin{problem}\label{mprob-bis}
Try to understand or to compute the $\Ext$ between twisted functors. In particular, if we know $\Ext^*_{\PP_\kk}(F,G)$, what can we infer about $\Ext^*_{\PP_\kk}(F^{(r)},G^{(r)})$, for $r>0$?
\end{problem}
One of the recent important achievements of the theory of strict polynomial functors is a complete solution to problem \ref{mprob-bis}. To state the solution we need an auxiliary operation on strict polynomial functors, which was introduced in \cite{TouzeENS}. Given a strict polynomial functor $F$ and a finite dimensional vector space $W$, we let $F_W$ denote strict polynomial functor such that $F_W(V)=F(W\otimes V)$. Now if $W$ is graded, then $F_W$ canonically inherits a grading. We call this construction \emph{parametrization} (indeed, the new functor $F_W$ is just the old one with a parameter $W$ inserted). 
One may think of the grading on parametrized functors as follows. Let $\PP_{\kk}^*$ denote the category of graded strict polynomial functors, and morphisms of strict polynomial functors preserving the grading. Parametrization by $W$ yields an \emph{exact} functor
$$\begin{array}{ccc}
\PP_\kk & \to & \PP_\kk^*\\
F & \mapsto & F_W
\end{array}\;.$$
Thus, to understand the grading on parametrized functors, it suffices to understand the grading on parametrized injectives. But injectives in $\PP_\kk$ are (direct summands of products of) symmetric tensors of the form $V\mapsto S^{d_1}(V)\otimes\dots \otimes S^{d_k}(V)$. On symmetric tensors, the grading induced by parametrization is simply the usual grading that one obtains by considering $W\otimes V$ as a graded vector space (with $V$ concentrated in degree zero) and applying the functor $S^{d_1}\otimes\dots \otimes S^{d_k}$.
The following result was conjectured in \cite{TouzeENS}, where it was verified for several families of pairs of functors $(F,G)$. The result generalizes many previously known computations \cite{FS,FFSS,Chalupnik1}. One may find two different proofs of it, namely in \cite{TouzeUnivNew} and in \cite{Chalupnik3}. Both proofs rely on an idea of M. Cha{\l}upnik, namely using the (derived) adjoint of precomposition by the Frobenius twist. (But see section \ref{subsec-sens-dur}).
\begin{theorem}\label{thm-Tformula}
For all strict polynomial functors $F,G$, there is a graded isomorphism:
$$\Ext^*_{\PP_\kk}(F^{(r)},G^{(r)})\simeq \Ext^*_{\PP_\kk}(F,G_{E_r})\;,$$
where the degree on the right hand side is understood as the total degree obtained by adding the $\Ext$-degree with the degree of the graded functor $G_{E_r}$ obtained by parametrizing $G$ by the graded vector space $E_r=\Ext^*_{\PP_\kk}(I^{(r)},I^{(r)})$ explicitly described in example \ref{ex-FScomputation}.
\end{theorem}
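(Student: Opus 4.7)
The plan is to follow the adjoint-functor approach initiated by M.~Cha{\l}upnik. Fix the homogeneous degree $d$ and consider the functor
\[
\Phi_r: \PP_{d,\kk} \longrightarrow \PP_{dp^r,\kk}, \qquad F \longmapsto F^{(r)} = F\circ I^{(r)}.
\]
Since exactness in $\PP_{d,\kk}$ is checked pointwise on vector spaces, $\Phi_r$ is exact, and it preserves all colimits, so by the adjoint functor theorem it admits a right adjoint $\Psi_r$. Exactness of $\Phi_r$ ensures that the adjunction passes to derived categories, producing for all $F, G \in \PP_{d,\kk}$ a natural isomorphism
\[
\Ext^*_{\PP_{dp^r,\kk}}(F^{(r)}, G^{(r)}) \;\simeq\; \Ext^*_{\PP_{d,\kk}}\bigl(F,\, \RR\Psi_r(G^{(r)})\bigr).
\]
The theorem therefore reduces to producing a natural isomorphism of graded strict polynomial functors
\[
\RR\Psi_r(G^{(r)}) \;\simeq\; G_{E_r}. \qquad (\star)
\]

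To establish $(\star)$, I would first observe that both sides are exact in $G$. Exactness of the right-hand side is immediate from exactness of parametrization. For the left-hand side, one checks that $G \mapsto G^{(r)}$ sends the standard injective cogenerators (tensor products of symmetric powers) to $\Psi_r$-acyclics. It then suffices to construct $(\star)$ naturally on such a cogenerating family.

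A K\"unneth-type compatibility of precomposition by $I^{(r)}$ and of parametrization with tensor products further reduces matters to a single symmetric power $G = S^m$, and then the exponential isomorphism $S^m_{U\oplus W} \simeq \bigoplus_{i+j=m} S^i_U \otimes S^j_W$ (applied with $U = E_r$ in stages) allows one to reduce to the base case $G = I$. There $(\star)$ specializes to $\RR\Psi_r(I^{(r)}) \simeq I_{E_r}$, which by the adjunction is equivalent to the fundamental Friedlander--Suslin computation $\Ext^*_{\PP_\kk}(I^{(r)}, I^{(r)}) \simeq E_r$ recalled in example~\ref{ex-FScomputation}.

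The main obstacle will be verifying that the identifications produced by the K\"unneth and exponential reductions assemble into an isomorphism $(\star)$ that is natural in $G$ as a graded strict polynomial functor, not merely as a graded vector space. Concretely, one must match the internal grading on $G_{E_r}$ (coming from the grading on $E_r$) with the cohomological Ext-degree on the derived adjoint side, and check compatibility of all auxiliary isomorphisms with the counit of the adjunction. This is the technical step where an explicit description of $\RR\Psi_r$ on a generating family carries the real weight in the existing proofs, and it is presumably the reformulation of this step that allows the new approach of section~\ref{subsec-sens-dur} to bypass the construction of $\Psi_r$ altogether.
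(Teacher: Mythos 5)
Your outline is the derived-adjoint route of the earlier published proofs \cite{TouzeUnivNew,Chalupnik3}, not the proof given in this paper (section \ref{subsec-sens-dur} deliberately avoids $\Psi_r$: it uses the classes of theorem \ref{thm-univ-classes} to build an explicit morphism of $\delta$-functors compatible with cup products, and then only has to check that it is an isomorphism on standard injectives via lemma \ref{lm-basic} and the computations of sections \ref{subsubsec-Step1}--\ref{subsubsec-Step3}). Within your route there is one step that is simply false: the standard injectives are \emph{not} sent by $(-)^{(r)}$ to $\Psi_r$-acyclic objects. If $(S^\mu)^{(r)}$ were $\Psi_r$-acyclic, then $\RR\Psi_r((S^\mu)^{(r)})$ would be concentrated in cohomological degree $0$, which already contradicts your own target formula $(\star)$, since $(S^\mu)_{E_r}$ is nonzero in positive degrees as soon as $p^r>1$; and it is genuinely false independently of $(\star)$: since $\Gamma^{d}$ is projective, acyclicity would give $\Ext^{>0}_{\PP_\kk}(\Gamma^{p\,(r)},S^{p\,(r)})\simeq\Ext^{>0}_{\PP_\kk}(\Gamma^{p},\Psi_r(S^{p\,(r)}))=0$, contradicting the classical nonvanishing of these groups in positive even degrees \cite{FS,FFSS}. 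The whole point of the theorem is that $\RR\Psi_r$ of a twisted functor has cohomology spread over even degrees up to $2d(p^r-1)$; that spreading is where the internal grading of $G_{E_r}$ comes from.

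The more serious gap is that your reductions (K\"unneth compatibility, exponential isomorphism, the base case $G=I$ via Friedlander--Suslin) at best identify the \emph{cohomology} of $\RR\Psi_r(G^{(r)})$ with $G_{E_r}$. The theorem needs more, namely an isomorphism in the derived category, natural in $G$ --- equivalently the collapse of the twisting spectral sequence $\Ext^p_{\PP_\kk}(F,(G_{E_r})^q)\Rightarrow\Ext^{p+q}_{\PP_\kk}(F^{(r)},G^{(r)})$ --- and this formality statement is precisely the hard content of \cite{Chalupnik3} (via an explicit study of the derived Kan extension and Troesch-type formality) and of \cite{TouzeUnivNew} (via the universal classes). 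You acknowledge this as ``the main obstacle'' but supply no argument, so the proposal as written reduces the theorem to the statement that actually has to be proved. Note also that passing from a cogenerating family to arbitrary $G$ is delicate without a globally defined comparison map: an objectwise identification on injectives does not assemble by itself, because the long exact sequences for $\RR\Psi_r$ have a priori nonzero connecting maps. This is exactly the difficulty that the paper's strategy circumvents, by first constructing the natural transformation $\phi^E$ (from the classes $c_\nu$) on all of $\mathcal{F}$ and only then evaluating it on injectives.
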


Theorem \ref{thm-Tformula} can be efficiently used in practical computations. For example, the computations of \cite{Chalupnik1} and most computations of \cite{FFSS} are instances of the following example.

\begin{example}
Assume that for all vector spaces $W$, $\Ext^{>0}_{\PP_\kk}(F,G_W)=0$. Such a condition is satisfied in the following concrete cases: if $F$ is projective, or if $G$ is injective, or if the $GL_d$-module $F(\kk^d)$ has a standard filtration and the $GL_d$-module $G(\kk^d)$ has a costandard filtration\footnote{Indeed, that the $GL_d$-module $G(\kk^d)$ has a costandard filtration is equivalent to the fact that the functor $G$ has a Schur filtration, i.e. a filtration whose associated graded is a direct sum of Schur functors as defined in \cite{ABW}. It then follows from \cite[Thm II.2.16]{ABW} that the parametrized functor $G_W$ also has a Schur filtration. The Ext condition follows by a highest weight category argument.} where $d=\max\{\deg F,\deg G\}$. Then the $\Ext$ computation between $F^{(r)}$ and $G^{(r)}$ can be recovered as a $\mathrm{Hom}$ computation via the graded isomorphism:
$$\Ext^*_{\PP_\kk}(F^{(r)},G^{(r)})\simeq \mathrm{Hom}_{\PP_\kk}(F,G_{E_r})\;.$$
Moreover, in the concrete cases given above, the latter $\mathrm{Hom}$ computation is easy to perform.
\end{example}

In general, it may not be easy to compute $\Ext^*_{\PP_\kk}(F,G_{E_r})$. However, even when these $\Ext$-groups do not seem easy to compute, one can draw many interesting qualitative results from the isomorphism of theorem \ref{thm-Tformula}. For example, the graded vector space $\mathrm{Hom}_{\PP_\kk}(F,G_{E_r})$ is a direct summand of $\Ext^*_{\PP_\kk}(F^{(r)},G^{(r)})$, which provides lots of nontrivial cohomology classes. 
Here is another way to use theorem \ref{thm-Tformula}. Recall that $E_r=\bigoplus_{i=0}^{p^r-1}\kk[2i]$, where $\kk[s]$ denotes a copy of $\kk$ placed in degree $s$. In particular, the graded functor $\bigoplus_{i=0}^{p^r-1} G_{\kk[2i]}$ is a direct summand of $G_{E_r}$. If $G$ is a homogeneous functor of degree $d$, then $G_{\kk[2i]}=G[2di]$ (a copy of $G$ placed in degree $2di$). Thus theorem \ref{thm-Tformula} has the following consequence.
\begin{corollary}\label{cor-per}
If $G$ is homogeneous of degree $d$ then 
$$\Ext^*_{\PP_\kk}(F^{(r)},G^{(r)})\simeq \bigoplus_{i=0}^{p^r-1}\Ext^{*+2di}_{\PP_\kk}(F,G) \;\oplus \; \text{Another graded summand}\;.$$
\end{corollary}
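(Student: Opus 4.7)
The plan is to apply Theorem~\ref{thm-Tformula} and then exhibit the claimed summand directly from the graded decomposition $E_r=\bigoplus_{i=0}^{p^r-1}\kk[2i]$ announced in the text preceding the statement. The whole difficulty reduces to showing that $\bigoplus_{i=0}^{p^r-1}G_{\kk[2i]}$ is a direct summand of $G_{E_r}$ in $\PP_\kk^*$; once this is granted, the identification $G_{\kk[2i]}\simeq G[2di]$ (valid since $G$ is homogeneous of degree $d$), combined with the total-degree convention of Theorem~\ref{thm-Tformula}, finishes the argument.

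To produce the splitting, I would start from the canonical inclusions $\iota_i:\kk[2i]\hookrightarrow E_r$ and projections $\pi_i:E_r\twoheadrightarrow \kk[2i]$, which satisfy $\pi_j\circ\iota_j=\Id$ and $\pi_j\circ\iota_k=0$ for $j\ne k$. Tensoring with a variable vector space $V$ and applying $G$ yields morphisms $\iota_i^G:G_{\kk[2i]}\to G_{E_r}$ and $\pi_i^G:G_{E_r}\to G_{\kk[2i]}$ in $\PP_\kk^*$. Functoriality immediately gives $\pi_j^G\circ\iota_j^G=\Id$, while for $j\ne k$ the composite $\pi_j\circ\iota_k$ factors through the zero vector space, so that by functoriality $\pi_j^G\circ\iota_k^G=G(0)=0$, using the elementary fact that a strict polynomial functor of positive homogeneous degree vanishes on zero morphisms. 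Assembling the $\iota_i^G$ and $\pi_i^G$ for $i=0,\dots,p^r-1$ therefore realises $\bigoplus_{i=0}^{p^r-1}G_{\kk[2i]}$ as a direct summand of $G_{E_r}$.

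Applying $\Ext^*_{\PP_\kk}(F,-)$ to this decomposition and combining with Theorem~\ref{thm-Tformula} yields
\[
\Ext^*_{\PP_\kk}(F^{(r)},G^{(r)})\;\simeq\;\bigoplus_{i=0}^{p^r-1}\Ext^*_{\PP_\kk}(F,G[2di])\;\oplus\;\Ext^*_{\PP_\kk}(F,Q),
\]
where $Q$ denotes the complementary summand of $G_{E_r}$. Since $G[2di]$ is a copy of $G$ placed in internal degree $2di$, the total-degree convention rewrites each factor $\Ext^*_{\PP_\kk}(F,G[2di])$ as the graded shift denoted $\Ext^{*+2di}_{\PP_\kk}(F,G)$ in the statement, and the corollary follows. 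The only point that requires a moment of care is the vanishing of the cross-terms $G(\pi_j\circ\iota_k)$ for $j\ne k$; this is immediate from homogeneity, and I do not foresee any more substantial obstacle in the argument.
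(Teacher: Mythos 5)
Your proposal is correct and takes essentially the same route as the paper: apply Theorem \ref{thm-Tformula}, split off $\bigoplus_{i=0}^{p^r-1} G_{\kk[2i]}=\bigoplus_{i=0}^{p^r-1} G[2di]$ from $G_{E_r}$, and read off the shifts via the total-degree convention; the paper merely asserts the direct-summand claim (it is an instance of the multihomogeneous decomposition it later invokes for Corollary \ref{cor-gen}), while your retraction argument via the maps $\iota_i,\pi_i$ and the vanishing of $G$ on zero maps is a perfectly good justification of it. The only caveat, shared implicitly with the paper, is that this requires $d\ge 1$: for a constant (degree $0$) functor $G$ the cross-terms are identities rather than zero and the splitting degenerates.
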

In other words, $\Ext^*_{\PP_\kk}(F^{(r)},G^{(r)})$ contains $p^r$ shifted copies of $\Ext^{*}_{\PP_\kk}(F,G)$ as direct summands. This explains periodicity phenomena which were often observed empirically in computations. One can refine this idea at the price of using a bit of combinatorics. If $G$ is a homogeneous functor of degree $d$, then the functor $G(V_1\oplus \dots\oplus V_N)$ with $N$ variables can be decomposed in a direct sum of homogeneous strict polynomial functors of $N$ variables. Applying this to $E_r\otimes V= V[0]\oplus V[2]\oplus\dots \oplus V[2p^r-2]$, we obtain a decomposition of $G_{E_r}$. To state this decomposition, we denote by $\Lambda(d,k)$ the set of compositions of $d$ into k parts (i.e. of tuples $\mu=(\mu_1,\dots,\mu_{k})$ of nonnegative integers such that $\mu_1+\dots+\mu_{k}=d$) and by $\Lambda_+(d,k)$ the subset of partitions into $k$ parts (i.e. those compositions satisfying $\mu_0\ge \dots\ge \mu_k$). By reordering a composition $\mu$, one obtains a partition which we denote by $\pi(\mu)$. Then there are strict polynomial functors $G_\lambda$ indexed by partitions of $d$ and an isomorphism of graded functors (the integer in brackets indicates in which degree the copy of $G_\lambda$ is placed)
$$G_{E_r}= \bigoplus_{\mu\in \Lambda(d,p^r)} G_{\pi(\mu)} \left[\sum_{i=1}^{p^r} 2(i-1)\mu_i\right]\;. $$
For all partitions $\lambda$ we let $E_\lambda=\Ext^*_{\PP_\kk}(F,G_\lambda)$. This is a vector space of finite total dimension provided $F$ and $G$ have finite dimensional values. We have the following generalization of corollary \ref{cor-per}.
\begin{corollary}\label{cor-gen}
If $G$ is homogeneous of degree $d$ then there is a finite number of graded vector spaces $E_\lambda$ indexed by partitions of $d$, such that for all $r\ge 0$
$$\Ext^*_{\PP_\kk}(F^{(r)},G^{(r)})= \bigoplus_{\mu\in \Lambda(d,p^r)} E_{\pi(\mu)} \left[\sum_{i=1}^{p^r} 2(i-1)\mu_i\right]\;.$$
\end{corollary}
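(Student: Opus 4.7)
The plan is to derive Corollary~\ref{cor-gen} as a direct consequence of Theorem~\ref{thm-Tformula} together with the multihomogeneous decomposition of the parametrized functor $G_{E_r}$ stated immediately before the corollary. All the work is thus concentrated in justifying that decomposition and translating it into a statement about $\Ext$ groups.

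First I would apply Theorem~\ref{thm-Tformula} to reduce the problem to an understanding of $\Ext^*_{\PP_\kk}(F, G_{E_r})$, where $E_r = \bigoplus_{i=1}^{p^r} \kk[2(i-1)]$. The key structural input is then the following general fact: if $G$ is homogeneous of degree $d$, any direct sum decomposition $W = W_1 \oplus \cdots \oplus W_N$ induces, via the action of the torus $(\mathbb{G}_m)^N$ on the summands (or equivalently via polarisation), a natural decomposition $G(W_1 \oplus \cdots \oplus W_N) = \bigoplus_{\mu \in \Lambda(d,N)} G^\mu(W_1,\dots,W_N)$ into multihomogeneous pieces of multidegree $\mu$, with $|\mu|=d$. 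This is a standard consequence of the strict polynomial nature of $G$ and can be extracted directly from the definitions.

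Second, I would specialise this to $N=p^r$ and $W_i = \kk[2(i-1)] \otimes V$, so that $\bigoplus W_i = E_r \otimes V$. Two observations are then required. On the one hand, as \emph{ungraded} functors of $V$, the $W_i$ are all isomorphic to $V$, so the ungraded isomorphism class of $G^\mu(W_1,\dots,W_{p^r})$ is invariant under permutation of the coordinates of $\mu$, and hence depends only on the partition $\pi(\mu)$; this furnishes the functor $G_{\pi(\mu)}$ appearing in the displayed formula. On the other hand, because parametrisation is an exact functor $\PP_\kk \to \PP_\kk^*$ and the grading on parametrised symmetric tensors is the obvious one induced from $W$, the summand $G^\mu$ inherits the degree shift $\sum_{i=1}^{p^r} 2(i-1)\mu_i$ from its multihomogeneity. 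Together these two observations reproduce the decomposition $G_{E_r} = \bigoplus_{\mu \in \Lambda(d,p^r)} G_{\pi(\mu)}\bigl[\sum_{i=1}^{p^r} 2(i-1)\mu_i\bigr]$.

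Finally, I would apply $\Ext^*_{\PP_\kk}(F,-)$, commute it with the finite direct sum, define $E_\lambda := \Ext^*_{\PP_\kk}(F,G_\lambda)$ for each partition $\lambda$ of $d$, and observe that the set of such $\lambda$ is finite. Taking Theorem~\ref{thm-Tformula} into account yields the claimed isomorphism. I expect the main (and only genuine) obstacle to be rigorously setting up the multihomogeneous decomposition in the graded setting and identifying its summands with well-defined functors $G_\lambda$ indexed by partitions; once this piece of functorial bookkeeping is in place, the rest is a direct manipulation of Theorem~\ref{thm-Tformula}.
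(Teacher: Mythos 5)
Your proposal is correct and follows essentially the same route as the paper: apply Theorem \ref{thm-Tformula}, decompose $G_{E_r}$ via the multihomogeneous decomposition of $G(V_1\oplus\dots\oplus V_{p^r})$ applied to $E_r\otimes V=V[0]\oplus V[2]\oplus\dots\oplus V[2p^r-2]$, note that the ungraded summands depend only on the partition $\pi(\mu)$ while the grading contributes the shift $\sum_i 2(i-1)\mu_i$, and then apply $\Ext^*_{\PP_\kk}(F,-)$ and set $E_\lambda=\Ext^*_{\PP_\kk}(F,G_\lambda)$. This is exactly the argument sketched in the paragraph preceding the corollary, so no further comparison is needed.
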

Note that there is only a finite number of $E_{\lambda}$ appearing in this decomposition, since there is only a finite number of partitions $\lambda$ of $d$. The integer $r$ plays a role only for the number of factors $E_\lambda$ and the shift. One may draw interesting consequences of this. For example there is a numerical polynomial $f$ of degree $\le d$ depending of $F$, $G$ but not on $r$, such that for $r\ge \log_p(d)$ the total dimension of the graded vector space $\Ext^*_{\PP_\kk}(F^{(r)},G^{(r)})$ is equal to $f(p^r)$.

\subsection{More groups and more general coefficients}\label{subsubsec-more}
Although theorem \ref{thm-Tformula} is a simple and complete solution to problem \ref{mprob-bis}, the reader may feel unsatisfied because theorem \ref{thm-Tformula} adresses only a small part of the original problem \ref{mprob}. Indeed:
\begin{enumerate}
\item[(a)] it considers the group scheme $G=GL_n$ only,
\item[(b)] it considers polynomial representations only, 
\item[(c)] it considers only stable representations (i.e. when $n$ is big enough with respect to the degrees of the representations).
\end{enumerate}
It seems hard to improve on problem (c), for strict polynomial technology really relies on computational simplifications which are known to appear only in the stable range, i.e. when $n$ is big enough. But problems (a), (b)  can be successfully adressed, and we wish to present solutions to these problems here. 

We first consider problem (b). Theorem \ref{thm-Tformula} gives access to cohomology groups of the form
$$\HH^*(GL_n, \mathrm{Hom}_\kk(V,W)^{(r)})\simeq \Ext^*_{GL_n}(V^{(r)},W^{(r)}) $$
for polynomial representations $V$ and $W$. Many interesting and natural representations of $GL_n$ are not covered by this theorem. For example symmetric powers of the adjoint representation $S^d(\mathfrak{gl}_n)$ are not of the form $\mathrm{Hom}_\kk(V,W)$. A solution to cover more general coefficients by a functorial approach was found in \cite{FF}. The idea is to use strict polynomial \emph{bi}functors. A strict polynomial bifunctor of bidegree $(d,e)$ is a functor 
$$B:(\mathcal{V}_\kk^f)^{\mathrm{op}}\times \mathcal{V}_\kk^f\to \mathcal{V}_\kk$$
such that for all $V$ the functor $W\mapsto B(V,W)$ is strict polynomial of degree $e$, and for all $W$ the functor $V\mapsto B(V,W)$ is strict polynomial of degree $d$.
\begin{example}\label{ex-bif}
If $F$ and $G$ are strict polynomial functors of respective degrees $d$ and $e$, then $\mathrm{Hom}_\kk(F,G):(V,W)\mapsto \mathrm{Hom}_\kk(F(V),G(W))$ is a strict polynomial bifunctor of bidegree $(d,e)$. Let $\mathrm{gl}$ denote the strict polynomial bifunctor $(V,W)\mapsto \mathrm{Hom}_\kk(V,W)$ (of degree $(1,1)$). Then for all $F$ of degree $d$, the composite $F\circ \mathrm{gl}:(V,W)\mapsto F(\mathrm{Hom}_\kk(V,W))$ is a strict polynomial bifunctor of degree $(d,d)$.
\end{example}

We denote by $\PP_{\kk}(1,1)$ the category of strict polynomial bifunctors of finite bidegree (the notation $(1,1)$ suggests that the functors of this category have one contravariant variable and one covariant variable). If $B$ is a bifunctor, $GL_n$ acts on the vector space $B(\kk^n,\kk^n)$ by letting a matrix $g$ act as the endomorphism $B(g^{-1},g)$. The resulting representation is a \emph{rational} representation, but not a polynomial representation in general. We get in this way an exact functor
$$\mathrm{ev}_{(\kk^n,\kk^n)}:\PP_{\kk}(1,1)\to \text{Rational representations of $GL_n$}\;. $$
For example, $\mathrm{ev}_{(\kk^n,\kk^n)}(S^d\circ \mathrm{gl})$ is isomorphic to the symmetric power of the adjoint representation $S^d(\mathfrak{gl}_n)$. More generally, all finite dimensional rational representations of $GL_n$ lie in the image of this evaluation functor.
Given a strict polynomial bifunctor $B$, we denote by its cohomology $\HH_{\mathrm{gl}}^*(B)$ the extension groups:
$$\HH_{\mathrm{gl}}^*(B)=\bigoplus_{k\ge 0}\Ext^*_{\PP_\kk(1,1)}(\Gamma^k\circ \mathrm{gl},B)\;.$$
Franjou and Friedlander constructed \cite[Thm 1.5]{FF} a graded map, which is an isomorphism as soon as $n\ge \max\{d,e\}$, where $(d,e)$ is the bidegree of $B$:
$$ \HH^*_{\mathrm{gl}}(B) \to \HH^*(GL_n,B(\kk^n,\kk^n))\;. $$
Cohomology of strict polynomial bifunctors subsumes extensions of strict polynomial functors. Indeed, given strict polynomial functors $F$, $G$, there is an isomorphism \cite[Thm 1.5]{FF}:
$$\HH^*_{\mathrm{gl}}(\mathrm{Hom}_\kk(F,G))\simeq \Ext^*_{\PP_\kk}(F,G)\;.$$

\begin{remark}
An alternative construction of the isomorphism between bifunctor cohomology and $GL_n$-cohomology is given in \cite{TouzeClass}. As shown in \cite{TouzeHDR}, it also provides a graded isomorphism for $n\ge \max\{d,e\}$:
$$ \HH^*_{\mathrm{gl}}(B)\simeq \HH^*(SL_{n+1},B(\kk^{n+1},\kk^{n+1}))\;.$$
\end{remark}

Given a strict polynomial bifunctor $B$ we denote by $B^{(r)}$ the bifunctor obtained by precomposing each variable by the $r$-th Frobenius twist. That is, $B^{(r)}(V,W)=B(V^{(r)},W^{(r)})$, so that the rational $GL_n$-module $\mathrm{ev}_{(\kk^n,\kk^n)}(B^{(r)})$ is isomorphic to the $r$-th Frobenius twist of the rational $GL_n$-module $\mathrm{ev}_{(\kk^n,\kk^n)}(B)$. We also denote by $\Gamma^{d,E_r}$ the graded strict polynomial functor such that 
$$\Gamma^{d,E_r}(V)=\Gamma^d\mathrm{Hom}_\kk(E_r,V)= (\mathrm{Hom}_\kk(E_r,V)^{\otimes d})^{\Si_d}\subset \mathrm{Hom}_\kk(E_r,V)^{\otimes d}\;.$$
(The grading on $\Gamma^{d,E_r}$ is the one such that the inclusion above preserves gradings if the tensor product of graded vector spaces is defined as usual.) 
In \cite{TouzeUnivNew} we prove the following generalization of theorem \ref{thm-Tformula}.
\begin{theorem}\label{thm-TformulaBif}
For all strict polynomial bifunctors $B$, there is a graded isomorphism:
$$\HH_{\mathrm{gl}}^*(B^{(r)})\simeq \bigoplus_{k\ge 0}\Ext^*_{\PP_\kk(1,1)}(\Gamma^{k,E_r}\circ \mathrm{gl},B)\;.$$
In this isomorphism, the degree on the right hand side is understood as the total degree obtained by adding the $\Ext$-degree and the degree of the graded bifunctor 
$$\Gamma^{k, E_r}\circ \mathrm{gl}:(V,W)\mapsto \Gamma^k(\mathrm{Hom}_\kk(E_r,\mathrm{Hom}_\kk(V,W)))=\Gamma^k(\mathrm{Hom}_\kk(E_r\otimes V,W))\;.$$ 
As usual, $E_r$ is the graded vector space $\Ext^*_{\PP_\kk}(I^{(r)},I^{(r)})$ described in example \ref{ex-FScomputation}.
\end{theorem}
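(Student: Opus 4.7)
The plan is to deduce Theorem \ref{thm-TformulaBif} from the single-variable Theorem \ref{thm-Tformula} by a two-step reduction: first verify the formula for ``separable'' bifunctors of the form $B=\mathrm{Hom}_\kk(F,G)$ with $F,G\in\PP_\kk$, then extend to arbitrary $B$ by resolving it with such separable bifunctors.

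For the separable case, the Franjou--Friedlander isomorphism recalled in Section \ref{subsubsec-more} gives
\[
\HH^*_{\mathrm{gl}}\bigl(\mathrm{Hom}_\kk(F,G)^{(r)}\bigr)=\HH^*_{\mathrm{gl}}\bigl(\mathrm{Hom}_\kk(F^{(r)},G^{(r)})\bigr)\simeq \Ext^*_{\PP_\kk}(F^{(r)},G^{(r)}),
\]
and Theorem \ref{thm-Tformula} further identifies this with $\Ext^*_{\PP_\kk}(F,G_{E_r})$. To match the right-hand side of Theorem \ref{thm-TformulaBif}, I would use the identity
\[
\Gamma^{k,E_r}(\mathrm{Hom}_\kk(V,W))=\Gamma^k(\mathrm{Hom}_\kk(V,E_r^*\otimes W)),
\]
which relies on the finite-dimensionality of $E_r$, to exhibit $\Gamma^{k,E_r}\circ\mathrm{gl}$ as the reparametrization of $\Gamma^k\circ\mathrm{gl}$ obtained by applying $E_r^*\otimes-$ in the covariant variable. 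Repeating the proof of the Franjou--Friedlander isomorphism with this reparametrization inserted on the test side then yields a ``parametrized'' Franjou--Friedlander isomorphism
\[
\bigoplus_{k\ge 0}\Ext^*_{\PP_\kk(1,1)}\bigl(\Gamma^{k,E_r}\circ\mathrm{gl},\mathrm{Hom}_\kk(F,G)\bigr)\simeq \Ext^*_{\PP_\kk}(F,G_{E_r}),
\]
which matches the left-hand side.

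For arbitrary $B\in\PP_\kk(1,1)$, I would choose a resolution $B\to J^\bullet$ whose terms $J^i$ are direct summands of external tensor products of injectives of $\PP_\kk$, each of which is separable after the standard duality. Since precomposition by the Frobenius twist is exact on $\PP_\kk(1,1)$, $B^{(r)}\to J^{\bullet(r)}$ remains a resolution suitable for computing $\HH^*_{\mathrm{gl}}(B^{(r)})$. The separable-case isomorphism is natural in $F$ and $G$, hence in each $J^i$, so it assembles into a morphism of the two resulting hyperhomology spectral sequences which is an isomorphism on the $E_1$-page and therefore on the abutments.

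The main obstacle will be to ensure that the comparison map in the separable case arises from a single universal construction, for instance as the Yoneda product with a canonical class in $\bigoplus_k\Ext^*_{\PP_\kk(1,1)}\bigl(\Gamma^k\circ\mathrm{gl},(\Gamma^{k,E_r}\circ\mathrm{gl})^{(r)}\bigr)$. This universality is exactly what is needed so that the map commutes with the differentials of the resolution $J^\bullet$; it is also the point of contact with the ``universal cohomology classes'' whose equivalence with the untwisting theorems is the topic of Section \ref{sec-eq}. A secondary subtlety is the bookkeeping of the internal grading inherited from the graded vector space $E_r$, which must be consistently matched with the $\Ext$-degree on both sides.
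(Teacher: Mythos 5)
Your reduction of the general case to the separable injectives $S^{\lambda,\mu}\simeq\Hom_\kk(\Gamma^\lambda,S^\mu)$ is sound, and the ``parametrized Franjou--Friedlander'' identification you want does hold (it follows from the exactness of parametrization and the adjunction $\HH^*_{E,\mathrm{gl}}(B)\simeq\HH^*_{\mathrm{gl}}(B_E)$ used in section \ref{subsec-pf}). But the step you yourself flag as the ``main obstacle'' is not a verification to be deferred: it is the entire content of the theorem, and nothing in your proposal supplies it. Theorem \ref{thm-Tformula} is only available as an abstract graded isomorphism for each pair $(F,G)$, with no naturality asserted; to compare along a resolution $J^\bullet$ you need a map defined simultaneously for all bifunctors $B$ and natural with respect to arbitrary morphisms of bifunctors, and morphisms between the separable injectives $\Hom_\kk(\Gamma^\lambda,S^\mu)$ are not all induced by pairs of morphisms in $\PP_\kk$, so even naturality of Theorem \ref{thm-Tformula} in $(F,G)$ would not suffice. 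The universal construction you ask for --- Yoneda composition, after twisting, with canonical classes --- requires classes in $\Ext^{*}_{\PP_\kk(1,1)}\bigl(\Gamma^{kp^r}\circ\mathrm{gl},(\Gamma^{k,E_r}\circ\mathrm{gl})^{(r)}\bigr)$ (the home you propose, $\Ext^*_{\PP_\kk(1,1)}\bigl(\Gamma^{k}\circ\mathrm{gl},(\Gamma^{k,E_r}\circ\mathrm{gl})^{(r)}\bigr)$, vanishes for $k>0$, $r>0$, since the arguments are homogeneous of different bidegrees), together with the comultiplication and cup-product compatibilities needed to control every summand of the exponential decomposition of $\Gamma^{k,E_r}\circ\mathrm{gl}$. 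Producing such classes is precisely Theorem \ref{thm-univ-classes}, proved by the explicit resolutions of \cite{TouzeUniv,TvdK}; it cannot be extracted from the bare statement of Theorem \ref{thm-Tformula}.

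Moreover, even granting the canonical class, your plan to conclude on injectives by matching against the composite of the Franjou--Friedlander isomorphism with Theorem \ref{thm-Tformula} does not close the argument: equality of graded dimensions does not show that your specific natural map is an isomorphism, and no identification of that map with the abstract isomorphism is given. This is exactly where the paper's proof in section \ref{sec-eq} invests its effort: it builds the comparison map directly from the universal classes, one twist at a time via $E_r\simeq E_{r-1}^{(1)}\otimes E_1$, shows it is a morphism of $\delta$-functors compatible with cup products, and then proves it is an isomorphism on injectives by the explicit computations of Steps 1--3 (the vanishing lemmas \ref{lm-annul1} and \ref{lm-annul2}, sum-diagonal adjunction and K\"unneth, surjectivity via cup products as in lemma \ref{lm-strateg}, and the coinvariant comparisons of proposition \ref{prop-step-3}); in that logical order the single-variable Theorem \ref{thm-Tformula} is a consequence of the bifunctor statement rather than an input to it. Your outline keeps the routine scaffolding (resolution by separable injectives, grading bookkeeping) but leaves out the construction and verification that constitute the proof.
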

\begin{remark}
The statement given here is slightly different from \cite[Thm 1.2]{TouzeUnivNew} as the parameter $E_r$ is not at the same place in the formula. However, it is not hard to see that the two parametrizations used are adjoint, so that the two formulas are equivalent. See the end of section \ref{subsubsec-Step3} for more details on this topic.
\end{remark}

Next we consider problem (a). $\Ext$ computations in $\PP_\kk$ can also be used in order to compute $G$-cohomology when $G$ is a classical group scheme $Sp_{2n}$, $O_{n,n+\epsilon}$, or $SO_{n,n+\epsilon}$, with $\epsilon\in\{0,1\}$. We recall quickly the statements. Given strict polynomial functors $X$ and $F$ we denote by $\HH^*_{X}(F)$ the extension groups:
$$\HH^*_X(F)= \bigoplus_{k\ge 0}\Ext^*_{\PP_\kk}(\Gamma^k\circ X, F)\;.$$ 
In \cite[Thm 3.17]{TouzeClass} and \cite[Thm 7.24]{TouzeHDR} we proved the existence of a graded map, which is an isomorphism as soon as $2n\ge \deg F$:
$$ \HH^*_{\Lambda^2}(F) \to \HH^*(Sp_{2n},F(\kk^{2n\,\#}))\;. $$
Here $\kk^{2n\,\#}$ is the dual of the defining representation of $Sp_{2n}$.
Similary, by \cite[Thm 3.24]{TouzeClass} and \cite[Thm 7.24, Cor 7.31]{TouzeHDR} if $p\ne 2$, there are graded maps, which are isomorphisms as soon as $2n+\epsilon\ge \deg F+1$: 
$$ \HH^*_{S^2}(F) \to \HH^*(O_{n,n+\epsilon},F(\kk^{2n+\epsilon\,\#}))\xrightarrow[]{\mathrm{res}}\HH^*(SO_{n,n+\epsilon},F(\kk^{2n+\epsilon\,\#}))\;. $$ 
The analogue of theorem \ref{thm-Tformula} for extensions related to orthogonal and symplectic groups was proved by Pham Van Tuan. To be more specific, on can deduce the following statement from the results in \cite{Tuan}.
\begin{theorem}\label{thm-Pham}
If $X=S^2$ or $\Lambda^2$, and $p>2$, there are graded isomorphisms (where the grading on the right hand side is obtained by summing the $\Ext$ degree with the degree coming from the graded functor $\Gamma^{k,E_r}\circ X$):
$$\Ext^*_{\PP_\kk}(\Gamma^k\circ X,F^{(r)})\simeq \Ext^*_{\PP_\kk}(\Gamma^{k,E_r}\circ X,F)\;.$$
\end{theorem}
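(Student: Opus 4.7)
I would prove theorem \ref{thm-Pham} by adapting Chałupnik's derived-adjoint method (which underlies theorem \ref{thm-Tformula}) to the orthogonal/symplectic setting, combined with Kuhn duality. The basic strategy is to move the Frobenius twist from the second slot of the $\Ext$ into the first slot, apply the untwisting adjunction, and then translate back.

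First, I would invoke Kuhn duality $D:\PP_\kk \to \PP_\kk^{\mathrm{op}}$, an exact involution satisfying $D(S^n)=\Gamma^n$, $D(\Lambda^n)=\Lambda^n$, and $D(G^{(r)})=(DG)^{(r)}$. When $p>2$, the symmetrization map yields $S^2 V \simeq \Gamma^2 V$ naturally, so $X(V^\#)\simeq X(V)^\#$ naturally for $X\in\{S^2,\Lambda^2\}$, and hence $D(\Gamma^k\circ X)\simeq S^k\circ X$. Duality therefore rewrites
$$\Ext^*_{\PP_\kk}(\Gamma^k\circ X, F^{(r)}) \simeq \Ext^*_{\PP_\kk}((DF)^{(r)}, S^k\circ X).$$
Next, I would invoke the derived right adjoint $\RR K_r$ to the Frobenius precomposition $\phi_r = -\circ I^{(r)}$. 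The adjunction isomorphism $\RR\Hom_{\PP_\kk}(\phi_r A,B)\simeq \RR\Hom_{\PP_\kk}(A,\RR K_r B)$ yields
$$\Ext^*_{\PP_\kk}((DF)^{(r)}, S^k\circ X)\simeq \Ext^*_{\PP_\kk}(DF, \RR K_r(S^k\circ X)).$$

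The heart of the proof is then the identification $\RR K_r(S^k\circ X)\simeq S^{k,E_r}\circ X$, where $S^{k,E_r}(V)=S^k(E_r\otimes V)$ (or its graded dual, depending on convention; see the remark following theorem \ref{thm-TformulaBif}). Chałupnik's fundamental computation $\RR K_r(L^{(r)})\simeq L_{E_r}$ handles only the case when the target is a pure Frobenius twist, which $S^k\circ X$ is not. To bridge this gap, I would construct an explicit resolution of $S^k\circ X$ whose terms involve compositions of Frobenius-twisted functors with $X$ (or with isotypic summands of $X^{\otimes k}$), and verify that $\RR K_r$ applied to this resolution collapses to $S^{k,E_r}\circ X$. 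Once this is established, a final application of $D$ converts $\Ext^*_{\PP_\kk}(DF, S^{k,E_r}\circ X)$ back to $\Ext^*_{\PP_\kk}(\Gamma^{k,E_r}\circ X, F)$, yielding the desired formula.

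The main obstacle is precisely the identification $\RR K_r(S^k\circ X)\simeq S^{k,E_r}\circ X$: Chałupnik's formula applies only when the argument is itself a Frobenius twist, and $S^k\circ X$ is not. Bridging this requires a careful analysis of how $\RR K_r$ interacts with the quadratic functor $X$, and I expect the argument to rely crucially on $p>2$, both to split $V^{\otimes 2}=S^2 V\oplus\Lambda^2 V$ $\Si_2$-equivariantly and to ensure enough semisimplicity in the $\Si_k$-action on $X^{\otimes k}$ to control the isotypic decomposition. An alternative route I would consider, should this prove intractable, is to construct the left adjoint $\mathbf{L}L_r$ to $\phi_r$ directly and compute $\mathbf{L}L_r(\Gamma^k\circ X)\simeq \Gamma^{k,E_r}\circ X$ via a projective resolution, thereby avoiding the detour through duality altogether.
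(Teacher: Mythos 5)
Your outer reductions are sound: Kuhn duality does give $D(\Gamma^k\circ X)\simeq S^k\circ X$ for $X=S^2,\Lambda^2$ when $p>2$, and the derived adjunction for precomposition by $I^{(r)}$ identifies $\Ext^*_{\PP_\kk}((DF)^{(r)},S^k\circ X)$ with hyper-$\Ext$ of $DF$ against $\RR K_r(S^k\circ X)$. The genuine gap is exactly where you place ``the heart'': the identification $\RR K_r(S^k\circ X)\simeq S^{k,E_r}\circ X$ is not proved, and as written it cannot hold. Since there are no nonzero $\Ext$ between homogeneous functors of different degrees, $K_r$ divides polynomial degree by $p^r$: $\RR K_r(S^k\circ X)$ vanishes unless $p^r\mid k$, and when $p^r\mid k$ it is homogeneous of degree $2k/p^r$, whereas $S^k(E_r\otimes X(-))$ is homogeneous of degree $2k$. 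The statement you actually need (visible from the degree bookkeeping in section \ref{subsec-strateg}, where the weight-$dp$ component on the twisted side is matched with the weight-$d$ component on the parametrized side) is that $\RR K_r(S^{kp^r}\circ X)$ is \emph{formal}, quasi-isomorphic as a graded object of the derived category to (the Kuhn dual of) $\Gamma^{k,E_r}\circ X$; formality is indispensable to convert hyper-$\Ext$ into the asserted direct sum with total degree, for all $F$ and not merely up to a spectral sequence. This computation-plus-formality is precisely the technical core of the theorem --- it is the content of the formality statements of \cite{Tuan} --- and your proposal only gestures at it (``construct an explicit resolution \dots and verify that $\RR K_r$ collapses''), with no candidate resolution; also, the semisimplicity of the $\Si_k$-action on $X^{\otimes k}$ that you hope to exploit fails for $p\le k$, so $p>2$ buys you only the splitting of $\otimes^2$. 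Your fallback via the left adjoint has the same status: $\mathbf{L}L_r(\Gamma^{kp^r}\circ X)\simeq\Gamma^{k,E_r}\circ X$ is the same hard statement in dual form.

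Note also that your route is essentially the original one (Cha{\l}upnik's adjoint plus the formality results of \cite{Tuan}), whereas the proof given in section \ref{sec-eq} deliberately avoids $\RR K_r$ and formality: it constructs the map from the universal classes of theorem \ref{thm-univ-classes} (restricted from $\mathrm{gl}$ to $X$ via lemma \ref{lm-calcul-prelim}), splices them by Yoneda products to get a morphism of $\delta$-functors compatible with cup products, and checks it is an isomorphism on injectives in three steps ($\otimes^2$, then $\otimes^{2d}$ by sum--diagonal adjunction, then symmetric tensors by coinvariants and parametrization adjunctions), concluding by lemma \ref{lm-basic}. So either complete your plan by actually proving the formality of $\RR K_r(S^{kp^r}\circ X)$ --- which amounts to redoing \cite{Tuan} --- or switch to the classes-plus-$\delta$-functor argument, where the only inputs are theorem \ref{thm-univ-classes} and computations on injectives.
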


\subsection{Statements without functors}\label{subsubsec-nofct}
Problem \ref{mprob} was formulated in terms of the cohomology of group schemes, thus one might wish an answer in terms of the cohomology of group schemes. For the convenience of the reader, we now translate the main results obtained with functorial techniques into ready-to-use functor-free statements.
We use the following conventions and notations.
\begin{itemize}
\item
A representation of a classical group scheme $G\subset GL_n$ is called \emph{polynomial of degree $d$} if it may be written as the restriction to $G$ of a polynomial representation of degree $d$ of $GL_n$. 
\item
Let $G_n=GL_n$ or $SL_n$. A rational representation of $G_n\times G_m$ is said to be \emph{polynomial of bidegree $(d,e)$} if its restriction to $G_n$ (resp. $G_m$) is polynomial of degree $d$ (resp. $e$).
We let $\kk^n\boxtimes\kk^m$ be the polynomial representation of bidegree $(1,1)$ acted on by $G_n\times G_m$ via the formula $(g,h)\cdot v\otimes w= gv\otimes hw$.
\item Let $G_n=GL_n$ or $SL_n$. Using the embedding $G_n\to G_n\times G_n$, $g\mapsto (g^{-1},g)$, any polynomial representation $V$ of $G_n\times G_n$ yields a (non polynomial) representation of $G_n$, which we denote by $V_{\mathrm{conj}}$. For example, if $G_n=GL_n$ then $S^d(\kk^n\boxtimes\kk^n)_{\mathrm{conj}}$ equals $S^d(\mathfrak{gl}_n)$, the symmetric power of the adjoint representation of $GL_n$.
\item We define gradings on representations as follows.  We denote by $V[i]$ a copy of a representation, placed in degree $i$. Unadorned representations are placed in degree zero, i.e. $V=V[0]$. Also, the $\kk$-linear dual of a represention $V$ (acted on by $G$ via $(gf)(v)= f(g^{-1}v)$) is denoted by $V^{\#}$.
\end{itemize}
Then theorems \ref{thm-TformulaBif} and \ref{thm-Pham} can be reformulated as follows.
\begin{theorem}[Type A]
Let $G_n=SL_{n}$ or $GL_n$. Let $V$ be a polynomial representation of $G_n\times G_n$ of bidegree $(d,e)$. Let $r\ge 0$ and assume that $n\ge \max\{dp^r,ep^r\}+1$. There is a graded isomorphism (we take the total degree on the right hand side):
$$\HH^*(G_n,V_\mathrm{conj}^{(r)})\simeq \HH^*\left(G_n\times G_n\;,\; A_r^*\otimes V\right)\;,$$
where the graded representation $A_r^*$ denotes the symmetric algebra on the graded $G_n\times G_n$-representation $\bigoplus_{i=0}^{p^r-1}\kk^{n\#}\boxtimes \kk^{n\#}[2i]$.
\end{theorem}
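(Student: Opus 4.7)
The plan is to transport the statement to the world of strict polynomial bifunctors, apply Theorem \ref{thm-TformulaBif} there, and transport the resulting formula back. First, using the equivalence between polynomial representations of $G_n\times G_n$ of bidegree $(d,e)$ and fully covariant strict polynomial bifunctors of bidegree $(d,e)$ (valid for $n$ bigger than both degrees), I associate to $V$ a covariant bifunctor $\widetilde B$ and then set $B(V_1,V_2)=\widetilde B(V_1^{\#},V_2)$, producing a bifunctor in $\PP_\kk(1,1)$ of the same bidegree. Transporting through the canonical iso $(\kk^n)^{\#}\simeq \kk^n$ and absorbing a transpose twist via the self-duality of the adjoint representation, one checks that the two $G_n$-modules $V_{\mathrm{conj}}$ and $B(\kk^n,\kk^n)$ (the latter equipped with the bifunctor evaluation action $g\mapsto B(g^{-1},g)$) become isomorphic.

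With this dictionary in place, the proof is a three-step chain of isomorphisms. First, the Franjou--Friedlander evaluation theorem \cite{FF}, together with its $SL_{n+1}$-version from \cite{TouzeHDR} (which is responsible for the $+1$ in the stability bound), gives
\[
\HH^*(G_n,V_{\mathrm{conj}}^{(r)})\simeq \HH^*(G_n,B^{(r)}(\kk^n,\kk^n))\simeq \HH^*_{\mathrm{gl}}(B^{(r)})
\]
as soon as $n\ge \max\{dp^r,ep^r\}+1$. Second, Theorem \ref{thm-TformulaBif} untwists this into
\[
\HH^*_{\mathrm{gl}}(B^{(r)})\simeq \bigoplus_{k\ge 0} \Ext^*_{\PP_\kk(1,1)}\bigl(\Gamma^{k,E_r}\circ\mathrm{gl},\,B\bigr).
\]
Third, I re-apply the evaluation equivalence for $\PP_\kk(1,1)$ in the reverse direction. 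The covariantification of the bifunctor $\Gamma^{k,E_r}\circ\mathrm{gl}$ evaluates at $(\kk^n,\kk^n)$ to the graded polynomial $G_n\times G_n$-module $\Gamma^k(E_r^{\#}\otimes \kk^n\otimes \kk^n)$ of bidegree $(k,k)$. Combining this with the identity $\Ext^*_G(M,N)\simeq \HH^*(G,M^{\#}\otimes N)$ valid for finite-dimensional $M$ and the duality $\Gamma^k(X)^{\#}\simeq S^k(X^{\#})$, I rewrite each summand as
\[
\Ext^*_{\PP_\kk(1,1)}\bigl(\Gamma^{k,E_r}\circ\mathrm{gl},\,B\bigr)\simeq \HH^*\bigl(G_n\times G_n,\,S^k(E_r\otimes \kk^{n\#}\boxtimes \kk^{n\#})\otimes V\bigr).
\]
Summing over $k\ge 0$ collects the symmetric powers into the graded symmetric algebra $A_r^{*}=S^{*}\bigl(\bigoplus_{i=0}^{p^r-1} \kk^{n\#}\boxtimes \kk^{n\#}[2i]\bigr)$, which is exactly the coefficient module appearing in the statement.

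The main obstacle will be a careful bookkeeping of dualities and gradings. The identification of $V_{\mathrm{conj}}$ with the bifunctor evaluation of $B$ is not literal but goes through a transpose twist that must be verified to be cohomologically invisible. The passage from the $\mathrm{Hom}(E_r,-)$ parametrization used in Theorem \ref{thm-TformulaBif} to the $E_r\otimes -$ form appearing in $A_r^*$ is precisely the adjointness of the two parametrizations noted in the remark following that theorem, and amounts to reindexing the grading from $E_r^{\#}$ to $E_r$. Once these conventions are sorted out, the theorem falls out by concatenating the three isomorphisms above.
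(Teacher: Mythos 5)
Your proposal is correct and is essentially the paper's own (largely implicit) argument: the paper presents this theorem as a direct reformulation of Theorem \ref{thm-TformulaBif} via the evaluation isomorphisms of \cite{FF} and \cite{TouzeHDR} (whence the $+1$ for the $SL_n$ case) together with the duality/adjunction exchanging the $\mathrm{Hom}_\kk(E_r,-)$ parametrization for the symmetric algebra $A_r^*$, which is exactly the three-step chain you spell out.
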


\begin{theorem}[Type C]
Let $V$ be a polynomial representation of $Sp_{2n}$ of degree $d$. Let $r\ge 0$ and assume that  $2n\ge dp^r$, and $p$ is odd. There is a a graded isomorphism  (we take the total degree on the right hand side):
$$\HH^*(Sp_{2n},V^{(r)})\simeq \HH^*\left(GL_{2n}\;,\; C_r^*\otimes V\right)\;,$$
where  the graded representation $C_r^*$ denotes the symmetric algebra on the graded $GL_n$-representation $\bigoplus_{i=0}^{p^r-1}\Lambda^2(\kk^{2n\#})[2i]$.
\end{theorem}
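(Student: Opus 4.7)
The plan is to derive this from the three pillars already established in this section: the isomorphism $\HH^*(Sp_{2n}, F(\kk^{2n\#})) \simeq \HH^*_{\Lambda^2}(F)$ recalled just before theorem \ref{thm-Pham}, Pham Van Tuan's untwisting theorem \ref{thm-Pham} itself, and the Friedlander--Suslin isomorphism together with the standard $\Gamma$--$S$ duality, which will be used to transport the resulting $\Ext$ groups back to $GL_{2n}$-cohomology with the claimed coefficients.

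First, write $V = F(\kk^{2n\#})$ for some strict polynomial functor $F$ of degree $d$; this is legitimate because $V$ is by definition the restriction to $Sp_{2n}$ of a polynomial representation of $GL_{2n}$ of degree $d$, and $\kk^{2n} \simeq \kk^{2n\#}$ as $Sp_{2n}$-modules via the symplectic form. Since the polynomial degree $dp^r$ of $F^{(r)}$ is bounded by $2n$ by hypothesis, the stable-range isomorphism applies and gives
$$\HH^*(Sp_{2n}, V^{(r)}) \simeq \HH^*_{\Lambda^2}(F^{(r)}) = \bigoplus_{k \geq 0} \Ext^*_{\PP_\kk}(\Gamma^k \circ \Lambda^2, F^{(r)}).$$
Because $p > 2$, theorem \ref{thm-Pham} then untwists this to
$$\bigoplus_{k \geq 0} \Ext^*_{\PP_\kk}(\Gamma^{k, E_r} \circ \Lambda^2, F),$$
the grading on the right being the total degree obtained by summing the $\Ext$-degree with the internal grading inherited from $E_r$.

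Finally, Friedlander--Suslin (applicable since the only functors $\Gamma^{k, E_r}\circ\Lambda^2$ contributing non-trivially have polynomial degree $d \leq 2n$), combined with the tensor--Hom adjunction and the dualities $\Gamma^k(W)^{\#} \simeq S^k(W^\#)$ and $\Lambda^2(W)^\# \simeq \Lambda^2(W^\#)$, yields for each $k$
$$\Ext^*_{\PP_\kk}(\Gamma^{k, E_r} \circ \Lambda^2, F) \simeq \HH^*\bigl(GL_{2n},\, S^k\bigl(E_r \otimes \Lambda^2(\kk^{2n\#})\bigr) \otimes V\bigr).$$
Summing over $k$ and invoking $S^*(\bigoplus_i W_i) \simeq \bigotimes_i S^*(W_i)$ identifies $\bigoplus_k S^k(E_r \otimes \Lambda^2(\kk^{2n\#}))$ with $C_r^* = S^*\bigl(\bigoplus_{i=0}^{p^r-1} \Lambda^2(\kk^{2n\#})[2i]\bigr)$, completing the proof. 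The main subtlety will be reconciling grading conventions: the natural grading on the Hom-parametrized functor $\Gamma^{k, E_r}(V) = \Gamma^k(\Hom_\kk(E_r, V))$ comes from $E_r^\#$, whereas the grading on $C_r^*$ comes from $E_r$, so the adjoint-parametrization passage described in the remark following theorem \ref{thm-TformulaBif} is exactly what is needed to match the two gradings.
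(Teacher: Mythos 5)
Your proposal is correct and follows exactly the route the paper intends for this statement: it is presented there as a reformulation of theorem \ref{thm-Pham} combined with the stable comparison $\HH^*_{\Lambda^2}(F)\simeq\HH^*(Sp_{2n},F(\kk^{2n\#}))$, with the Friedlander--Suslin evaluation isomorphism and $\Gamma$--$S$ duality converting the resulting $\Ext$-groups in $\PP_\kk$ into $GL_{2n}$-cohomology with coefficients in $C_r^*\otimes V$. Your handling of the degree bounds and of the grading/parametrization conventions matches the paper's, so there is nothing to add.
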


\begin{theorem}[Types B and D]
Assume $p\ne 2$, let $G_{n,n+\epsilon}=O_{n,n+\epsilon}$ or $SO_{n,n+\epsilon}$ with $\epsilon\in\{0,1\}$, and let $V$ be a polynomial representation of $G_{n,n+\epsilon}$ of degree $d$. Let $r\ge 0$ and assume that $2n+\epsilon\ge dp^r+1$. There is a a graded isomorphism  (we take the total degree on the right hand side):
$$\HH^*(G_{n,n+\epsilon},V^{(r)})\simeq \HH^*\left(GL_{2n+\epsilon}\;,\; BD_r^*\otimes V\right)\;,$$
where the graded representation $BD_r^*$ denotes the symmetric algebra on the graded $GL_n$-representation $\bigoplus_{i=0}^{p^r-1}S^2(\kk^{2n+\epsilon\#})[2i]$.
\end{theorem}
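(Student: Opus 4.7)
The plan is to combine three ingredients: the functor-to-orthogonal-group isomorphism of \cite[Thm 3.24]{TouzeClass} and \cite[Thm 7.24, Cor 7.31]{TouzeHDR}, Pham's untwisting formula (Theorem \ref{thm-Pham}), and the Friedlander-Suslin evaluation isomorphism \cite[Cor 3.13]{FS} comparing $\Ext$ in $\PP_\kk$ to extensions of polynomial $GL_N$-modules. Setting $N=2n+\epsilon$, I would write $V=F(\kk^{N\#})$ for some strict polynomial functor $F$ of degree $d$; since $\kk^N$ and $\kk^{N\#}$ are isomorphic as $G_{n,n+\epsilon}$-modules via the orthogonal form, the choice of evaluation variable is immaterial on the $G_{n,n+\epsilon}$-side, and the Frobenius twist identification $F^{(r)}(\kk^{N\#})\simeq V^{(r)}$ lets me replace twisted representations by twisted functors throughout.

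The first step is to apply the $\HH^*_{S^2}$-isomorphism to the functor $F^{(r)}$, which has degree $dp^r$; the hypothesis $2n+\epsilon\ge dp^r+1$ is exactly the stable range required, giving
$$\HH^*(G_{n,n+\epsilon},V^{(r)})\;\simeq\;\HH^*_{S^2}(F^{(r)})\;=\;\bigoplus_{k\ge 0}\Ext^*_{\PP_\kk}(\Gamma^k\circ S^2,\,F^{(r)}).$$
Next, I would invoke Pham's Theorem \ref{thm-Pham} with $X=S^2$ (where the assumption $p>2$ enters for the first time) to absorb the Frobenius twist into a graded parametrization:
$$\bigoplus_{k\ge 0}\Ext^*_{\PP_\kk}(\Gamma^k\circ S^2,F^{(r)})\;\simeq\;\bigoplus_{k\ge 0}\Ext^*_{\PP_\kk}(\Gamma^{k,E_r}\circ S^2,F).$$
Finally, since $N\ge d$, each term can be converted into $GL_N$-cohomology via the Friedlander-Suslin evaluation isomorphism:
$$\Ext^*_{\PP_\kk}(\Gamma^{k,E_r}\circ S^2,F)\;\simeq\;\HH^*\bigl(GL_N,\,(\Gamma^{k,E_r}\circ S^2)(\kk^N)^{\#}\otimes V\bigr).$$

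The remaining and most delicate task is to reassemble the direct sum over $k$ into a single $GL_N$-cohomology with coefficients $BD_r^*\otimes V$. Using $E_r\simeq\bigoplus_{i=0}^{p^r-1}\kk[2i]$ together with the exponential law $\Gamma^k(W_0\oplus\cdots\oplus W_{p^r-1})\simeq\bigoplus_{k_0+\cdots+k_{p^r-1}=k}\bigotimes_i\Gamma^{k_i}(W_i)$, one decomposes $(\Gamma^{k,E_r}\circ S^2)(\kk^N)$ into graded tensor products of the $\Gamma^{k_i}(S^2(\kk^N))$ with explicit shifts inherited from the grading of $E_r$. Dualization then relies on the classical identifications $\Gamma^k(W)^{\#}\simeq S^k(W^{\#})$ and, decisively for $p>2$, $S^2(W)^{\#}\simeq\Gamma^2(W^{\#})\simeq S^2(W^{\#})$, yielding tensor products $\bigotimes_i S^{k_i}(S^2(\kk^{N\#}))$ with matching shifts. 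Summing over all $k\ge 0$ and all compositions $(k_0,\ldots,k_{p^r-1})$ then reconstructs precisely the graded symmetric algebra $BD_r^*=S^*\bigl(\bigoplus_{i=0}^{p^r-1}S^2(\kk^{N\#})[2i]\bigr)$, producing the asserted coefficients $BD_r^*\otimes V$ with the total grading. The main hurdle is the careful bookkeeping of gradings and duality conventions in this reassembly step, ensuring that the parametrized divided-power structure on the $\PP_\kk$ side dualizes exactly into the symmetric-algebra structure of $BD_r^*$ without sign ambiguity; the hypothesis $p\ne 2$ is essential throughout, both for Pham's theorem and for the $S^2\simeq\Gamma^2$ identification used in the dualization.
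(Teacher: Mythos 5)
Your proposal is correct and follows exactly the route the paper intends: the theorem is presented there as a reformulation of Theorem \ref{thm-Pham} combined with the $\HH^*_{S^2}$-to-orthogonal-group comparison isomorphisms of \cite{TouzeClass,TouzeHDR} and the Friedlander--Suslin evaluation isomorphism, which is precisely the chain you assemble. The duality bookkeeping you flag ($\Gamma^k(W)^{\#}\simeq S^k(W^{\#})$, $S^2(W)^{\#}\simeq S^2(W^{\#})$ for $p\ne 2$, and the shifts coming from $E_r$) is the same routine identification the paper leaves implicit, so nothing essential is missing.
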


\begin{remark}
If $r=0$ then $V^{(0)}=V$ and the results stated in this section can be interpreted as polynomial induction formulas. These formulas follow directly from the results of \cite{TouzeClass} (improved in \cite{TouzeHDR}). There is no such interpretation if $r\ge 1$, although the isomorphisms have  similarities with induction isomorphisms. 
\end{remark}

\section{Cohomology of twisted representations versus cohomology classes}\label{sec-eq}

As recalled in section \ref{subsec-Fin}, one of the crucial ingredients of the proof  \cite{TvdK} of cohomological finite generation for reductive group schemes is the construction of some cohomology classes living in $\HH^*(GL_n,\Gamma^d(\mathfrak{gl}_n)^{(1)})$, for $d\ge 0$ and $n\gg 0$. The purpose of this section is to show that this construction is equivalent to the solution of problem \ref{mprob} given in theorem \ref{thm-TformulaBif}.

It is not difficult to use theorem \ref{thm-TformulaBif} to construct the desired classes. This is explained in \cite{TouzeUnivNew}, and we briefly reiterate the argument in section \ref{subsec-sens-facile}.

Conversely, we explain how the cohomology classes can be used to prove theorem \ref{thm-TformulaBif} in section \ref{subsec-sens-dur}. The strategy given here differs from the arguments of \cite{TouzeUnivNew,Chalupnik3} in that it does not use the adjoint functor to the precomposition of the Frobenius twist (using this adjoint was a key idea for the earlier proofs, due to M. Cha{\l}upnik). Moreover, this proof can be adapted to cover theorem \ref{thm-Pham} related to the cohomology of orthogonal and symplectic types, without relying on the formality statements established in \cite{Tuan}. 
In order to deal with all settings simultaneously (i.e. symplectic, orthogonal and general linear cases), we first need to introduce a few notations.

\subsection{Some recollections and notations}\label{subsec-nota}
In the sequel, $\mathcal{F}$ stands for $\PP_\kk$ -- the category of strict polynomial functors of bounded degree over a field $\kk$ of characteristic $p$ recalled in section \ref{subsubsec-pol}, or its bifunctor analogue $\PP_\kk(1,1)$ recalled in section \ref{subsubsec-more}. While the letter `$\mathcal{F}$' suggests that we are working in a functor category, our functor categories are very similar to categories of representations of finite dimensional algebras or groups. They are $\kk$-linear abelian with enough injectives and projectives, $\mathrm{Hom}$s between finite functors (i.e. functors with values in finite dimensional $\kk$-vector spaces) are finite dimensional, all objects are the union of their finite subobjects, and so on. Actually much of what is explained below can be understood while thinking of $\mathcal{F}$ as a category of representations.  

In order to deal with degrees without refering to the number of variables of the functors, we say that a bifunctor of bidegree $(d,e)$ has (total) degree $d+e$.

We will often have to compose (bi)functors. To avoid cumbersome notations, the composition symbol `$\circ$' will systematically be omitted in the sequel. Moreover the $r$-th Frobenius twist functor $I^{(r)}$ will most often be denoted by `$^{(r)}$'. For example the composition 
$S^d\circ I^{(r)}\circ \mathrm{gl}$ will be denoted by $S^{d\,(r)}\mathrm{gl}$.
We will also use concise notations for tensor products of symmetric or divided powers. If $\mu=(\mu_1,\dots,\mu_n)$ is a composition of $d$, i.e. a tuple of nonnegative integers with sum $\sum \mu_i=d$, then we let 
$S^\mu=S^{\mu_1}\otimes\dots\otimes S^{\mu_n}$, and similarly for divided powers. Sometimes compositions will be written as a matrix of nonnegative integers $(\nu_{ij})$ with $1\le i\le \ell$, $0\le j\le m$, and we let $S^\nu$ be the tensor product $\bigotimes_{1\le i\le \ell}\bigotimes_{0\le j\le m}S^{\nu_{ij}}$.

We will constantly use the divided powers $\Gamma^dV=(V^{\otimes d})^{\Si_d}$, $d\ge 0$ of a vector space $V$. If $\mu$ is a composition of $d$, the tensor product of the inclusions $\Gamma^{\mu_i}\subset \otimes^{ \mu_i}$ yields an injective morphism $\Gamma^\mu \hookrightarrow \otimes^d$ which we will call the canonical inclusion. More generally, the canonical inclusion $\Gamma^\mu\hookrightarrow \Gamma^\nu$ refers to the unique (if is exist) morphism factoring the canonical inclusion $\Gamma^\mu\hookrightarrow \otimes^d$ through the canonical inclusion $\Gamma^\nu\hookrightarrow \otimes^d$. Recall also that $\bigoplus_{d\ge 0}\Gamma^d$ is a Hopf algebra (dual to the maybe more usual symmetric algebra). We will mainly use the coalgebra structure. The component $\Delta_{d,e}:\Gamma^{d+e}\to \Gamma^d\otimes\Gamma^e$ of the comultiplication is the canonical inclusion. Finally, divided powers satisfy a decomposition formula similar to that of symmetric powers, which we shall refer to as the \emph{exponential isomorphism for divided powers}:
$$\Gamma^{d}(V\oplus W)\simeq\bigoplus_{i+j=d}\Gamma^iV\otimes\Gamma^jW\;.$$

Fix an object $X$ of $\mathcal{F}$. Then we let 
$$\HH^*_{X}(F)=\bigoplus_{d\ge 0}\mathrm{Ext}^*_{\mathcal{F}}(\Gamma^dX,F)\;.$$
More generally, if we fix a graded vector space $E$ concentrated in even degrees and with finite total dimension, we can replace the functor $\Gamma^d$ by the graded functor $\Gamma^{d,E}$ defined by $\Gamma^{d,E}(V)=\Gamma^d\mathrm{Hom}_\kk(E,V)$.
Then for all $F$ in $\mathcal{F}$, we let 
$$\HH^*_{E, X}(F)=\bigoplus_{d\ge 0}\mathrm{Ext}^*_{\mathcal{F}}(\Gamma^{d,E}X,F)\;.$$
Note that this is a bigraded object: the first grading is the $\mathrm{Ext}$ grading, and the second grading is induced by the grading on $\Gamma^{d}\mathrm{Hom}_\kk(E,X)$. When refering to $\HH^*_{E, X}(F)$ as a graded vector space, we take the total grading. In particular if $\kk$ is considered as a graded vector space concentrated in degree zero, then $\HH^*_{\kk, X}(F)$ is graded isomorphic to $\HH^*_{X}(F)$.
These cohomology groups are equipped with a cup product:
\begin{align*}\mathrm{Ext}^i_{\mathcal{F}}(\Gamma^{d,E}X, F)\otimes \mathrm{Ext}^i_{\mathcal{F}}(\Gamma^{e,E}X, G)\xrightarrow[]{\cup}\mathrm{Ext}^{i+j}_{\mathcal{F}}(\Gamma^{d+e,E}X, F\otimes G)
\end{align*}
defined by $c\cup c'=\Delta_{d,e}^*(c\sqcup c')$, where `$\sqcup$' refers to the external cup product. The latter is the derived version of the tensor product, defined for all $F'$, $F$, $G'$ and $G$:
\begin{align*}\mathrm{Ext}^i_{\mathcal{F}}(F', F)\otimes \mathrm{Ext}^j_{\mathcal{F}}(G', G)\xrightarrow[]{\sqcup} \mathrm{Ext}^{i+j}_{\mathcal{F}}(F'\otimes G', F\otimes G)\;.\end{align*}
In the sequel, we will mainly use these definitions for the functors $X=S^2$, $X=\Lambda^2$ and for the bifunctor $X=\mathrm{gl}$. Note that in these cases $X$ commutes with Frobenius twists, i.e. we have isomorphisms: $^{(r)}X\simeq X^{(r)}$.

\subsection{The cohomology classes}\label{subsec-univ-classes}
In \cite{TouzeUniv,TvdK}, we constructed some cohomology classes living in the bifunctor cohomology groups $\HH^*_{\mathrm{gl}}(\Gamma^{*\,(1)}\mathrm{gl})$, or equivalently in the cohomology groups $\HH^*(GL_n,\Gamma^*(\mathfrak{gl}_n)^{(1)})$ for $n\gg 0$. As recalled in section \ref{subsec-Fin}, these classes play a role in the proof of cohomological finite generation for reductive groups. It is possible to construct analogous classes related to the cohomology of classical groups in types B,C,D. The uniform statement valid for all classical types is given by the next theorem.
\begin{theorem}\label{thm-univ-classes}
Assume that $(\mathcal{F},X)$ equals $(\mathcal{P}_\kk(1,1), \mathrm{gl})$ or $(\mathcal{P}_\kk,\Lambda^2)$, or $(\mathcal{P}_\kk,S^2)$. In the last two cases, assume moreover that $p$ is odd. There are graded $\kk$-linear maps, for $\ell> 0$:
$$\psi_\ell:\Gamma^{\ell}\big(\HH^*_X({}^{(1)} X)\big)\xrightarrow[]{} \HH^*_X(\Gamma^{\ell\,(1)} X)\;, $$
satisfying the following properties: 
\begin{enumerate}
\item $\psi_1: \HH^*_X({}^{(1)} X)\to \HH^*_X({}^{(1)} X)$ is the identity map,
\item for all positive $\ell,m$, the following diagrams of graded vector spaces and graded maps commutes:
$$\xymatrix{
\HH^*_X(\,\Gamma^{\ell+m\,(1)} X\,)\ar[rr]^-{(\Delta_{\ell,m})_*}&&\HH^*_X\big(\,(\Gamma^{\ell}\otimes\Gamma^m)^{(1)} X\,\big)\\
\Gamma^{\ell+m}\big(\,\HH^*_X({}^{(1)} X)\,\big)\ar[u]_-{\psi_{\ell+m}}\ar[rr]^-{\Delta_{\ell,m}}&& \Gamma^\ell\big(\,\HH^*_X({}^{(1)} X)\,\big)\,\otimes\, \Gamma^m\big(\,\HH^*_X({}^{(1)} X)\,\big)\ar[u]_-{\psi_{\ell}\cup \psi_m}
}\;,$$
$$\xymatrix{
\HH^*_X\big(\,(\Gamma^{\ell}\otimes\Gamma^m)^{(1)} X\,\big)\ar[rr]^-{\mathrm{mult}_*}&&\HH^*_X(\,\Gamma^{\ell+m\,(1)} X\,)\\
\Gamma^\ell\big(\,\HH^*_X({}^{(1)} X)\,\big)\,\otimes\, \Gamma^m\big(\,\HH^*_X({}^{(1)} X)\,\big)\ar[u]_-{\psi_{\ell}\cup \psi_m}\ar[rr]^-{\mathrm{mult}}&& \Gamma^{\ell+m}\big(\,\HH^*_X({}^{(1)} X)\,\big)\ar[u]_-{\psi_{\ell+m}}
}\;,$$
\end{enumerate}
\end{theorem}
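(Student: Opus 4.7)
My plan is to construct $\psi_\ell$ from the external cup product, restricted to the symmetric group invariants, and then to verify the two compatibility diagrams from formal properties of $\sqcup$. Since $\Ext$ in $\mathcal{F}$ vanishes between functors of different homogeneous degrees, the space $V=\HH^*_X({}^{(1)}X)$ collapses to a single summand $\Ext^*_\mathcal{F}(\Gamma^p X,{}^{(1)}X)$, and similarly $\HH^*_X(\Gamma^{\ell(1)}X)=\Ext^*_\mathcal{F}(\Gamma^{\ell p}X,\Gamma^{\ell(1)}X)$. The starting observation is that the $\ell$-fold external cup product gives a $\Si_\ell$-equivariant linear map
$$V^{\otimes \ell}\xrightarrow{\;\sqcup\;}\Ext^*_\mathcal{F}\bigl((\Gamma^pX)^{\otimes \ell},\,(X^{(1)})^{\otimes \ell}\bigr),$$
with $\Si_\ell$ acting on the source by permutation and on the target by simultaneous permutation of tensor factors (with the usual Koszul signs, whence $p\ne 2$ is required for $X=S^2,\Lambda^2$). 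Restricting to $\Gamma^\ell V=(V^{\otimes\ell})^{\Si_\ell}$ on the source yields a map to the $\Si_\ell$-invariant part of this $\Ext$.

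To extract $\psi_\ell$, one composes with a descent to $\Ext^*_\mathcal{F}(\Gamma^{\ell p}X,\Gamma^{\ell(1)}X)$. The $\Si_\ell$-invariants of $(X^{(1)})^{\otimes \ell}$ are $\Gamma^{\ell(1)}X$ by definition, and the iterated comultiplication in the divided-power Hopf algebra yields a canonical inclusion $\Gamma^{\ell p}X\hookrightarrow \Gamma^\ell(\Gamma^p X)=((\Gamma^p X)^{\otimes \ell})^{\Si_\ell}$. At the chain level, I would fix a projective resolution $P_\bullet\to\Gamma^p X$ in $\mathcal F$, so that $P_\bullet^{\otimes \ell}\to (\Gamma^p X)^{\otimes \ell}$ is a $\Si_\ell$-equivariant resolution. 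For a diagonal element $v^{(\ell)}\in\Gamma^\ell V$ with $v$ represented by a cocycle $f\colon P_\bullet\to X^{(1)}[n]$, the cocycle $f^{\otimes \ell}$ is automatically $\Si_\ell$-equivariant and restricts unambiguously to a map between invariants of source and target. For general elements of $\Gamma^\ell V$, the exponential isomorphism for divided powers $\Gamma^\ell(V_1\oplus V_2)\simeq \bigoplus_{a+b=\ell}\Gamma^a V_1\otimes \Gamma^b V_2$ reduces the construction to diagonal classes on one-dimensional subspaces of $V$, which are then combined by $\kk$-linearity.

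Property (1) is immediate, since no symmetrization intervenes when $\ell=1$. The two diagrams in (2) are then formal consequences of the compatibility of external products with the Hopf structure of $\Gamma^\bullet$. For the first diagram, the associativity of $\sqcup$ and the subgroup inclusion $\Si_\ell\times \Si_m\subset \Si_{\ell+m}$ identify an $(\ell+m)$-fold external product with the external cup of an $\ell$-fold and an $m$-fold one, matching both the canonical inclusion $\Gamma^{\ell+m}\hookrightarrow\Gamma^\ell\otimes\Gamma^m$ on the source side and, after Frobenius twist, on the target. For the second diagram, one combines the definition $\cup=\Delta^*\sqcup$ with the bialgebra identity relating $\mathrm{mult}$ and $\Delta$ in $\Gamma^\bullet$, tracing through that both composites produce the same $\Si_{\ell+m}$-equivariant chain map on $P_\bullet^{\otimes \ell+m}$.

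The main obstacle will be the absence of a symmetrization operator in characteristic $p$: one cannot naively average cocycles over $\Si_\ell$ once $\ell\ge p$. This forces the construction to proceed through genuinely $\Si_\ell$-equivariant cocycles, starting from diagonal elements and extending via the exponential decomposition. Verifying that this assignment is well-defined and natural on all of $\Gamma^\ell V$, independent of the choices of projective resolution and cocycle representatives, and that the two diagrams commute in full generality rather than just for diagonal elements $v^{(\ell)}$, is the technical heart of the argument; it is also where the specific properties of $X=\mathrm{gl},S^2,\Lambda^2$ -- each commuting with Frobenius twists and admitting a clean interaction with the divided-power functors -- play an essential role.
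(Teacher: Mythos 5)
The construction you propose breaks down at exactly the step you defer to the end, and that step is the whole content of the theorem. Starting from $v\in\HH^*_X({}^{(1)}X)=\Ext^*_{\mathcal F}(\Gamma^pX,X^{(1)})$, the $\ell$-fold external product composed with the pullback along the canonical inclusion $\Gamma^{\ell p}X\hookrightarrow(\Gamma^pX)^{\otimes\ell}$ only produces a class in $\Ext^*_{\mathcal F}(\Gamma^{\ell p}X,(X^{(1)})^{\otimes\ell})$, i.e.\ the cup power; the theorem requires a class in $\Ext^*_{\mathcal F}(\Gamma^{\ell p}X,\Gamma^{\ell\,(1)}X)$, that is a lift along the inclusion $\Gamma^{\ell\,(1)}X\hookrightarrow(X^{(1)})^{\otimes\ell}$, which has no retraction once $\ell\ge p$. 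Your chain-level recipe does not produce such a lift: the $\Si_\ell$-equivariant cocycle $f^{\otimes\ell}$ does map the invariant subcomplex $(P_\bullet^{\otimes\ell})^{\Si_\ell}$ to invariants, but taking $\Si_\ell$-invariants is not exact in characteristic $p$, so $(P_\bullet^{\otimes\ell})^{\Si_\ell}$ is neither projective nor a resolution of $\Gamma^\ell(\Gamma^pX)$, and the comparison map from a projective resolution of $\Gamma^{\ell p}X$ to $P_\bullet^{\otimes\ell}$ (which exists only up to homotopy) has no reason to factor through the invariant subcomplex. The induced map $\Ext^*_{\mathcal F}(\Gamma^{\ell p}X,\Gamma^{\ell\,(1)}X)\to\Ext^*_{\mathcal F}(\Gamma^{\ell p}X,(X^{(1)})^{\otimes\ell})$ is in general neither injective nor surjective, so "the cup power lifts, coherently in $\ell$" is precisely what must be proved and is not formal; the reduction of general elements of $\Gamma^\ell V$ to diagonal ones via a choice of basis and the exponential isomorphism also leaves well-definedness unaddressed, but that is secondary. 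Note also that the hypothesis $p$ odd is not about Koszul signs (the relevant classes sit in even degrees); it enters through properties of $S^2$ and $\Lambda^2$ such as being direct summands of $\otimes^2$ and the acyclicity statements below.

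For comparison, the paper does not prove the theorem by formal manipulation of cup products: for $(\PP_\kk(1,1),\mathrm{gl})$ it invokes the explicit construction of \cite{TouzeUniv,TvdK}, which builds a concrete injective resolution $J$ of the one-variable functor $\Gamma^{\ell\,(1)}$ (of Troesch type), writes down explicit cocycles in a small accessible part of $J$, and then uses that the objects $S^\mu(X)$ are $\HH^*_X(-)$-acyclic (via the Cauchy filtration of \cite{ABW}) so that $J\mathrm{gl}$, though not injective, still computes $\HH^*_{\mathrm{gl}}$; the compatibilities with $\Delta_{\ell,m}$ and $\mathrm{mult}$ are checked on these explicit cocycles. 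The cases $(\PP_\kk,S^2)$ and $(\PP_\kk,\Lambda^2)$ in odd characteristic are then obtained either by running the same construction verbatim or, as in the paper, by restricting the $\mathrm{gl}$-classes via the restriction maps $\mathrm{res}_F$ together with lemma \ref{lm-calcul-prelim}. (The other derivation available in the paper, from theorem \ref{thm-untwist}, cannot be used here without circularity, since section \ref{subsec-pf} goes in the opposite direction.) So the gap in your proposal is not a technicality to be checked later: the coherent system of lifts $\psi_\ell$ is the nontrivial theorem, and some genuinely non-formal input — the explicit resolution of the twisted divided powers, or an untwisting theorem proved independently — is required to produce it.
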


\begin{remark}\label{rk-uni-versal}
For $(\mathcal{F},X)=(\mathcal{P}_\kk(1,1),\mathrm{gl})$, the classes in the image of $\psi_\ell$ are called `universal classes'. However it is not clear what universal property these classes satisfy. In particular, we don't know if the morphisms $\psi_\ell$  are uniquely determined. The problem of uniqueness is discussed in \cite[remark 4]{TouzeUnivNew}. The classes are `versal' in some sense though, since any affine algebraic group scheme $G$ is a subgroup of $GL_n$ for some $n$, hence receives by restriction some classes in $\HH^*(G,\Gamma^{d}(\mathfrak{g})^{(1)})$. 
\end{remark}

As already mentioned, a direct proof of theorem \ref{thm-univ-classes} when $(\mathcal{F},X)=\mathcal{P}_\kk(1,1)$ is given in \cite{TouzeUniv,TvdK}. 
The construction is combinatorial, that is, we construct an explicit resolution of the representation $\Gamma^d(\mathfrak{gl}_n)^{(1)}$. The whole resolution is complicated but there is a small part of the resolution which is easy to understand, and where we can easily construct nontrivial cocycles. The idea to construct this explicit resolution is as follows.
\begin{enumerate}
\item The strict polynomial bifunctor $\Gamma^{d\,(1)}\mathrm{gl}$ is too complicated, so we simplify the problem. Instead of constructing an injective resolution of $\Gamma^{d\,(1)}\mathrm{gl}$, we rather construct an injective resolution $J$ of the strict polynomial functor $\Gamma^{d\,(1)}$.
\item\label{itt-2} Having constructed $J$, we can evaluate on $\mathrm{gl}$ to obtain a resolution $J\mathrm{gl}$ of $\Gamma^{d\,(1)}\mathrm{gl}$. Now $J\mathrm{gl}$ is not an \emph{injective} resolution, nonetheless it is a resolution by $\HH^*_{\mathfrak{gl}}(-)$-acyclic objects. Thus it is perfectly qualified to compute cohomology.
\end{enumerate}
The category $\mathcal{P}_\kk(1,1)$ does not play a fundamental role in the construction of the classes. Indeed, the construction would work as well if we replaced it by the category of rational $GL_n$-modules, with $\mathrm{gl}$ replaced by the adjoint representation $\mathfrak{gl}_n$ and  $\HH^*_{\mathrm{gl}}(-)$ replaced by $H^*(GL_n,-)$. More generally the construction of the classes would work verbatim in any situation where one wants to compute some cohomology of the form $\HH^*(\Gamma^{d\,(1)}(X))$ provided this cohomology is equipped with cup products, and the objects $S^\mu(X)$ are $\HH^*(-)$-acyclic. In particular it works for $(\mathcal{F},X)=(\mathcal{P}_\kk,S^2)$ or $(\mathcal{P}_\kk,\Lambda^2)$ in odd characteristic (in the later cases $\HH^*_X(-)$-acyclicity follows from the Cauchy filtration of \cite[Thm III.1.4]{ABW}, see the proof of \cite[Lm 3.1]{TouzeUniv}.).

We now provide yet another approach to prove theorem \ref{thm-univ-classes} for $(\mathcal{F},X)=(\mathcal{P}_\kk,S^2)$ or $(\mathcal{P}_\kk,\Lambda^2)$ in odd characteristic, following the idea that the classes in $\HH^*_{\mathrm{gl}}(\Gamma^{\ell\,(1)} \mathrm{gl})$ are `versal' as mentioned in remark \ref{rk-uni-versal}. To this purpose, we use restriction maps 
$$\mathrm{res}_F:\HH^*_{\mathrm{gl}}(F\mathrm{gl})\to \HH^*_{X}(F X)$$
natural with respect to the strict polynomial functor $F$ and compatible with cup products. To be more specific, let $F$ be homogeneous of degree $d$, and consider an extension $e$ represented by (here we make apparent the variables $U$, $V$ of the strict polynomials bifunctors): 
$$0\to F(\mathrm{Hom}_\kk(U,V))\to \cdots \to \Gamma^d\mathrm{Hom}_\kk(U,V)\to 0\;.$$
By replacing $U$ by $V^*$ one gets an extension $e'$ of $F\,\otimes^2$ by $\Gamma^d\,\otimes^2$. If $\iota:X\leftrightarrows \otimes^2:\pi$ are the canonical inclusion and projections, the restriction map is then defined by
$$\mathrm{res}_F(e)=  (\Gamma^d\iota)^*(F\pi)_*(e')\;.$$
Next lemma collects some basic computations which can be found in the literature. 
The composition of the last three isomorphisms of lemma \ref{lm-calcul-prelim} is exactly the restriction map defined above, so that $\mathrm{res}_{I^{(1)}}$ is an isomorphism. Thus we can construct simply the maps $\psi_\ell$ for $(\mathcal{P}_\kk,X)$ by restricting the maps $\psi_\ell$ for $(\mathcal{P}_\kk(1,1),\mathrm{gl})$. That is, if we add decorations `$^X$' and `$^\mathrm{gl}$' to distinguish  the two cases we let:
$$\psi_\ell^X := \mathrm{res}_{\Gamma^{\ell\,(1)}}\circ \psi_\ell^{\mathrm{gl}}\circ \Gamma^\ell(\mathrm{res}_{I^{(1)}}^{-1})\;.$$

\begin{lemma}\label{lm-calcul-prelim}
Assume that $p$ is odd and $X=\Lambda^2$ or $S^2$. Let $\mathcal{P}_\kk(2)$ denote the category of strict polynomial functors with two covariant variables, and let $\boxtimes^2$ denote the bifunctor $\boxtimes^2(U,V)= U\otimes V$.
We have a chain of isomorphisms of graded vector spaces:
$$E_1\simeq\HH^*_{\mathrm{gl}}(\mathrm{gl}^{(1)})\simeq \mathrm{Ext}^*_{\mathcal{P}_\kk(2)}(\Gamma^p\,\boxtimes^2,\boxtimes^{2\,(1)})\simeq \HH^*_{X}(\otimes^{2\,(1)})\simeq  \HH^*_{X}(X^{(1)})\;.$$
\end{lemma}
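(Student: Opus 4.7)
My plan is to establish the four isomorphisms individually. The first, $E_1\simeq \HH^*_\mathrm{gl}(\mathrm{gl}^{(1)})$, is the Franjou--Friedlander identification $\HH^*_\mathrm{gl}(\mathrm{Hom}_\kk(F,G))\simeq\Ext^*_{\PP_\kk}(F,G)$ recalled in section~\ref{subsubsec-more}, applied to $F=G=I^{(1)}$ and combined with the Friedlander--Suslin computation of $E_1=\Ext^*_{\PP_\kk}(I^{(1)},I^{(1)})$ recorded in Example~\ref{ex-FScomputation}.

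For the second isomorphism, I first observe that $\mathrm{gl}^{(1)}$ is bihomogeneous of bidegree $(p,p)$, so the defining decomposition $\HH^*_\mathrm{gl}(\mathrm{gl}^{(1)})=\bigoplus_k\Ext^*_{\PP_\kk(1,1)}(\Gamma^k\mathrm{gl},\mathrm{gl}^{(1)})$ collapses to its $k=p$ summand. Kuhn duality in the contravariant slot then induces an equivalence $\PP_\kk(1,1)\simeq\PP_\kk(2)$ sending $\mathrm{gl}(U,V)=U^*\otimes V$ to $\boxtimes^2(V_1,V_2)=V_1\otimes V_2$, and accordingly $\Gamma^p\mathrm{gl}\mapsto\Gamma^p\boxtimes^2$ and $\mathrm{gl}^{(1)}\mapsto\boxtimes^{2\,(1)}$, yielding $\Ext^*_{\PP_\kk(1,1)}(\Gamma^p\mathrm{gl},\mathrm{gl}^{(1)})\simeq\Ext^*_{\PP_\kk(2)}(\Gamma^p\boxtimes^2,\boxtimes^{2\,(1)})$.

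The fourth isomorphism exploits the odd-characteristic splitting $\otimes^{2\,(1)}=S^{2\,(1)}\oplus\Lambda^{2\,(1)}$, so that $\HH^*_X(\otimes^{2\,(1)})=\HH^*_X(S^{2\,(1)})\oplus\HH^*_X(\Lambda^{2\,(1)})$; the claim that the projection onto $\HH^*_X(X^{(1)})$ (induced by $(I^{(1)}\pi)_*$) is an isomorphism thereby reduces to the vanishing of the complementary summand $\Ext^*_{\PP_\kk}(\Gamma^pX,Y^{(1)})=0$, where $Y$ denotes the factor of $\otimes^2$ opposite to $X$. This is a classical computation available from the \cite{FFSS} toolkit.

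The third isomorphism $\Ext^*_{\PP_\kk(2)}(\Gamma^p\boxtimes^2,\boxtimes^{2\,(1)})\simeq\HH^*_X(\otimes^{2\,(1)})$ is the substantive step. My candidate map combines restriction along the diagonal $V\mapsto(V,V)$ (which carries $\boxtimes^2\mapsto\otimes^2$ and $\boxtimes^{2\,(1)}\mapsto\otimes^{2\,(1)}$) with pullback by the canonical inclusion $\Gamma^p\iota\colon\Gamma^pX\hookrightarrow\Gamma^p\otimes^2$; the latter is meaningful because the exponential decomposition $\Gamma^p\otimes^2=\bigoplus_{i+j=p}\Gamma^iS^2\otimes\Gamma^j\Lambda^2$ makes $\Gamma^pX$ into a direct summand of $\Gamma^p\otimes^2$. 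I expect this to be the main obstacle: invoking the cross-vanishings $\Ext^*_{\PP_\kk}(\Gamma^iS^2\otimes\Gamma^j\Lambda^2,Y^{(1)})=0$ for off-diagonal $(i,j)$ and for both choices of $Y$ (again of FFSS type) reduces the source to a single surviving direct summand, which one then identifies with $E_1$ in parallel with the target; the induced map can be checked non-zero on the explicit Friedlander--Suslin generators, since the Koszul-type complexes producing those generators naturally factor through $X$ in odd characteristic.
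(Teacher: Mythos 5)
Your handling of the first two isomorphisms is exactly the paper's: Friedlander--Suslin's computation of $E_1$ transported to bifunctor cohomology by \cite[Thm 1.5]{FF}, then the duality equivalence $B\mapsto B(U^\sharp,V)$ between $\PP_\kk(1,1)$ and $\mathcal{P}_\kk(2)$, with the sum over $k$ collapsing to $k=p$ for degree reasons. The maps you propose for the last two isomorphisms are also the paper's maps (diagonal evaluation followed by $(\Gamma^p\iota)^*$, and the quotient $\otimes^{2\,(1)}\to X^{(1)}$). The divergence is that the paper proves these two maps are isomorphisms by invoking (the proof of) \cite[Thm 6.6]{TouzeClass}, whereas you sketch a direct argument, and that sketch has a genuine gap.

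Concretely: the exponential decomposition $\Gamma^p(\otimes^2)\simeq\bigoplus_{i+j=p}\Gamma^iS^2\otimes\Gamma^j\Lambda^2$ exists only after diagonal evaluation; it does not decompose the bifunctor $\Ext$-group $\Ext^*_{\mathcal{P}_\kk(2)}(\Gamma^p\boxtimes^2,\boxtimes^{2\,(1)})$, so your cross-vanishings at best describe the intermediate group $\Ext^*_{\PP_\kk}(\Gamma^p\otimes^2,\otimes^{2\,(1)})$, and the surviving summand $\Ext^*_{\PP_\kk}(\Gamma^pX,X^{(1)})\simeq E_1$, which you take as known ``in parallel with the target'', is precisely the computation the lemma is after --- it lies in \cite[Thm 6.6]{TouzeClass} (likewise the vanishings you need), not literally in \cite{FFSS}. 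Even granting that identification, your plan still requires the specific composite ``evaluate on the diagonal, then restrict along $\Gamma^p\iota$'' to be nonzero in each degree $0,2,\dots,2p-2$, and checking this ``on the explicit Friedlander--Suslin generators'' is not workable as stated: the positive-degree generators are not explicit enough for such a verification, and the claim that the Koszul-type complexes ``factor through $X$'' has no evident meaning. The way to close both gaps at once --- and what the cited proof does, in the same spirit as the paper's own Step 2 and lemma \ref{lm-annul2} --- is the sum--diagonal adjunction: $\Ext^*_{\PP_\kk}(\Gamma^pX,\otimes^{2\,(1)})\simeq\Ext^*_{\mathcal{P}_\kk(2)}(\Gamma^p(X(V_1\oplus V_2)),V_1^{(1)}\otimes V_2^{(1)})$; one kills every summand of $\Gamma^p(X(V_1\oplus V_2))$ except $\Gamma^p(V_1\otimes V_2)$ by multidegree and K\"unneth arguments, and one observes that the resulting isomorphism is exactly your map, because the cross-component of $X(\delta)$ coincides (up to an invertible scalar, $p$ being odd) with $\iota$. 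This structural argument replaces the generator check; without it (or without simply citing \cite[Thm 6.6]{TouzeClass} as the paper does), the third and fourth isomorphisms are not established.
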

\begin{proof}
The first isomorphism is Friedlander and Suslin's computation \cite[Thm 4.10]{FS} for $r=1$ (see example \ref{ex-FScomputation} for more details) translated in terms of bifunctors with \cite[Thm 1.5]{FF} (see more explanations in section \ref{subsubsec-more}). The second isomorphism is induced by the equivalence of categories between $\mathcal{P}_\kk(1,1)$ and $\mathcal{P}_\kk(2)$ given by dualizing the first variable of the bifunctors, i.e. it sends a bifunctor $B$ to the bifunctor $B'$ defined by $B'(U,V)=B(U^\sharp,V)$, where $^\sharp$ denotes $\kk$-linear duality. The third isomorphism is induced by evaluating both variables of the functor on the same variable $V$, and then pulling back by the map $\Gamma^{p}(\iota)$ where $\iota:X\to \otimes^2$ is the canonical inclusion. The fact that is is an isomorphism is (a very particular case) of the proof \cite[Thm 6.6]{TouzeClass}. Finally the last isomorphism is induced by the quotient map $\otimes^{2\,(1)}\to X^{(1)}$ and it is an isomorphism by \cite[Thm 6.6]{TouzeClass}.
\end{proof}

\subsection{The untwisting isomorphisms}
Recall from section \ref{subsubsec-pol} that $E_r$ denotes the graded vector space which equals $\kk$ in degrees $2i$ for $0\le i<p^r$ and zero in the other degrees. The notation $E_r$ reminds that this graded vector space is isomorphic to $\Ext^*_{\PP_\kk}(I^{(r)},I^{(r)})$, as initially computed by Friedlander and Suslin \cite{FS}.
The following statement is an abstract form of theorems \ref{thm-TformulaBif} and \ref{thm-Pham}, where we have made explicit the implicit naturalities and compatibilities with cup products (not all these properties are established in \cite{TouzeUnivNew,Chalupnik3,Tuan}).

\begin{theorem}\label{thm-untwist}
Assume that $(\mathcal{F},X)$ equals $(\mathcal{P}_\kk(1,1), \mathrm{gl})$ or $(\mathcal{P}_\kk,\Lambda^2)$, or $(\mathcal{P}_\kk,S^2)$. In the last two cases, assume moreover that $p$ is odd. 
For all positive $r$ there are isomorphisms of graded vector spaces (take the total degree on the left hand side):
$$\phi_F: \HH_{E_r, X}^*(F)\simeq\HH^*_{X}(F^{(r)})\;.$$
Moreover, $\phi_F$ is natural with respect to $F$ and commutes with cup products.
\end{theorem}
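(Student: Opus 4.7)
The plan is to construct $\phi_F$ directly from the universal classes of Theorem \ref{thm-univ-classes} and then verify that it is an isomorphism by reducing, via an injective coresolution of $F$, to the computable case where $F$ is a tensor product of symmetric powers.

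\textbf{Construction of $\phi_F$ and its formal properties.} I would treat $r=1$ first. The exponential isomorphism for divided powers, applied to the graded decomposition $E_1=\bigoplus_{i=0}^{p-1}\kk[2i]$, yields
$$\Gamma^{d,E_1}X\;=\;\bigoplus_{\mu\in\Lambda(d,p)}\Gamma^\mu X\bigl[2\textstyle\sum_i i\mu_i\bigr].$$
Accordingly, a class $c\in\Ext^n_{\mathcal{F}}(\Gamma^{d,E_1}X,F)$ of total degree $n$ splits as a sum of pieces $c_\mu\in\Ext^{n-2\sum i\mu_i}_{\mathcal{F}}(\Gamma^\mu X,F)$. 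Via the identification of Lemma \ref{lm-calcul-prelim}, the universal class $\psi_d$, restricted to the $\mu$-th summand of $\Gamma^d(E_1)$ and post-composed with the twisted comultiplication $\Gamma^{d\,(1)}\!\to\!\Gamma^{\mu\,(1)}$, provides a canonical Ext class $\eta_\mu\in\Ext^{2\sum i\mu_i}_{\mathcal{F}}(\Gamma^{dp}X,\Gamma^{\mu\,(1)}X)$. Since precomposition by $I^{(1)}$ is exact, $c_\mu$ has a canonical Frobenius twist $c_\mu^{(1)}$, and setting $\phi_F(c)=\sum_\mu c_\mu^{(1)}\circ\eta_\mu$ lands in $\Ext^n_{\mathcal{F}}(\Gamma^{dp}X,F^{(1)})=\HH^n_X(F^{(1)})$. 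For $r>1$, I would iterate this construction, using the cup product compatibility of $\psi_\ell$ in Theorem \ref{thm-univ-classes}(2) to ensure that the iteration does not depend on the chosen factorisation of $p^r=p\cdots p$. Naturality in $F$ is immediate, and cup product compatibility reduces, via $c\cup c'=\Delta_{d,e}^*(c\sqcup c')$, to the two commutative squares of Theorem \ref{thm-univ-classes}(2).

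\textbf{Verification on injective coefficients.} I would then check that $\phi_{S^\lambda}$ is an isomorphism for every composition $\lambda$. Since $S^\lambda$ is injective in $\mathcal{F}$, the left-hand side $\HH^*_{E_r,X}(S^\lambda)$ reduces to an ordinary Hom computation, which becomes combinatorially explicit after unfolding the exponential decomposition of $\Gamma^{d,E_r}X$. The right-hand side $\HH^*_X(S^{\lambda\,(r)})$ is one of the computations carried out by Friedlander and Suslin (Example \ref{ex-FScomputation}), extended to tensor products of symmetric powers by a Künneth argument; for types B, C, D, the analogous computation follows by applying the restriction maps $\mathrm{res}_F$ of Lemma \ref{lm-calcul-prelim} to the bifunctor case. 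A direct matching of the two bookkeepings of shifts, together with the characterisation of $\psi_d$ in Theorem \ref{thm-univ-classes}(1) on degree zero, shows that $\phi_{S^\lambda}$ is precisely the explicit isomorphism predicted by the construction.

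\textbf{Dévissage and main obstacle.} For arbitrary $F$, choose an injective coresolution $F\hookrightarrow I^\bullet$ with each $I^p$ a direct sum of symmetric tensors $S^\lambda$. The naturality of $\phi$ in $F$ from the first step provides an isomorphism of cochain complexes
$$\phi_{I^\bullet}:\Hom_{\mathcal{F}}(\Gamma^{d,E_r}X,I^\bullet)\xrightarrow{\;\sim\;}\Ext^*_{\mathcal{F}}(\Gamma^{dp^r}X,I^{\bullet\,(r)}),$$
whose cohomology on the left is $\HH^*_{E_r,X}(F)$, and whose cohomology on the right is the $E_2$ page of the hyper-Ext spectral sequence computing $\Ext^*_{\mathcal{F}}(\Gamma^{dp^r}X,F^{(r)})=\HH^*_X(F^{(r)})$. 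The main obstacle is that $S^{\lambda\,(r)}$ is \emph{not} injective, so this spectral sequence has potentially nonzero differentials past $E_2$; this is precisely the technical difficulty that the derived adjoint introduced by Cha\l{}upnik was designed to circumvent in earlier proofs, by replacing injective resolutions with better-adapted ones. My approach would bypass the adjoint instead by exploiting the cup product compatibility of the universal classes (Theorem \ref{thm-univ-classes}(2)): the classes $\eta_\mu$ assemble, under Yoneda multiplication with the classes coming from $I^\bullet$, into enough permanent cocycles to kill all higher differentials. The spectral sequence then collapses at $E_2$ and $\phi_F$ is the desired isomorphism, whose naturality and cup product compatibility have already been established.
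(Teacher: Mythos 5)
Your construction of $\phi_F$ is essentially the one the paper uses (classes $\eta_\mu$ obtained from the $\psi_\ell$ of theorem \ref{thm-univ-classes}, then Yoneda composition with the twist of the components of $c$), and naturality plus cup-product compatibility are indeed formal. But the verification step contains a genuine gap, and it is exactly at the hard point. You claim that $\HH^*_X(S^{\lambda\,(r)})$ ``is one of the computations carried out by Friedlander and Suslin, extended to tensor products of symmetric powers by a K\"unneth argument.'' This is not so: \cite[Thm 4.10]{FS} only gives $\Ext^*_{\PP_\kk}(I^{(r)},I^{(r)})$, and no K\"unneth argument produces $\Ext^*_{\mathcal{F}}(\Gamma^{d}X, S^{\lambda\,(r)})$ from it, because the source $\Gamma^{d}X$ does not split compatibly with the tensor factors of $S^{\lambda\,(r)}$; these groups are precisely the deep computations of \cite[Thm 4.5]{FFSS} (equivalently \cite[Cor 4.7]{TouzeENS}), i.e.\ the very untwisting results that the theorem is meant to encapsulate. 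The paper avoids assuming them gratuitously by a two-stage argument: first the tensor powers $Y^{\otimes d}$ of $Y=\otimes^2$ (or $\mathrm{gl}$), where sum--diagonal adjunction plus the vanishing lemmas \ref{lm-annul1}, \ref{lm-annul2} make the target a tensor product of single twists so that Friedlander--Suslin plus K\"unneth \emph{does} apply (this is lemma \ref{lm-strateg} and proposition \ref{prop-step-2}); then the passage from $Y^{\otimes d}$ to symmetric tensors, which is proposition \ref{prop-step-3} and requires the cited nontrivial input (and, for $S^2$, $\Lambda^2$, a reduction to the bifunctor case by sum--diagonal adjunction, not by the restriction maps $\mathrm{res}_F$, which are only known to be isomorphisms in the special case $F=I^{(1)}$ of lemma \ref{lm-calcul-prelim}). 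Relatedly, your $r=1$ construction must be carried out with an arbitrary graded parameter $E$ (as in theorem \ref{thm-untwist-bis}); otherwise the iteration $E_r\simeq E_{r-1}^{(1)}\otimes E_1$ you invoke has nothing to iterate through, and the injective check must likewise be done in the parametrized setting (the paper uses the adjunction $\HH^*_{E,\mathrm{gl}}(B)\simeq\HH^*_{\mathrm{gl}}(B_E)$ for this).

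Your final d\'evissage is also problematic, though for the opposite reason: it is both unjustified and unnecessary. The claim that the universal classes ``assemble into enough permanent cocycles to kill all higher differentials'' of the hyper-$\Ext$ spectral sequence of $I^{\bullet\,(r)}$ is not an argument, and nothing in your setup supports an $E_2$-collapse. But no spectral sequence is needed: since $\phi$ is, by construction via Yoneda splice, a morphism of $\delta$-functors vanishing in negative degrees, the elementary dimension-shifting statement (lemma \ref{lm-basic}) shows that being an isomorphism on injectives already forces it to be an isomorphism on all $F$. So the correct repair of your last step is to delete it in favour of that lemma, and to concentrate the real work where it belongs, namely on establishing the isomorphism for $Y^{\otimes d}$ and then for symmetric tensors via the coinvariance statement of proposition \ref{prop-step-3}.
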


\begin{remark}
Theorem \ref{thm-untwist} does not hold for $(\mathcal{P}_\kk,\Lambda^2)$ or $(\mathcal{P}_\kk,S^2)$ in characteristic $2$. Indeed, in both cases for $F=I$ and $r=1$ the source of $\phi_F$ would be zero for degree reasons (there are no $\mathrm{Ext}$ between homogeneous functors of different degrees) while the target is nonzero as there are non split extensions $0\to I^{(1)}\to S^2\to \Lambda^2\to 0$ and $0\to I^{(1)}\to S^2\to \otimes^2\to S^2\to 0$.
\end{remark}

\subsection{From theorem \ref{thm-untwist} to theorem \ref{thm-univ-classes}}\label{subsec-sens-facile}

We now assume that theorem \ref{thm-untwist} holds. Following \cite{TouzeUnivNew}, we are going to prove theorem \ref{thm-univ-classes}. 

Cup products define graded maps $\cup:\HH^0_{E_1, X}(X)^{\otimes \ell}\to \HH^0_{E_1, X}(X^{\otimes\ell})$. Since the graded functor $\HH^0_{E_1, X}(-)$ is left exact, $\HH^0_{E_1, X}(\Gamma^{\ell}X)$ identifies with the graded subspace of $\HH^0_{E_1, X}(X^{\otimes\ell})$ of elements invariant under the action of the symmetric group $\Si_\ell$ (induced by letting $\Si_\ell$ permuting the factors of the tensor product $X^{\otimes\ell}$). Thus there is a unique graded map $\alpha_\ell$ making the following diagram commutative.
$$\xymatrix{
\HH^0_{E_1, X}(\Gamma^{\ell}X)\ar@^{^{(}->}[rr] &&\HH^0_{E_1, X}(X^{\otimes\ell})\\
&&\ar@{-->}[llu]^-{\alpha_\ell}\ar[u]^-{\cup}\Gamma^{\ell}\big(\,\HH^0_{E_1, X}(X)\,\big)
}$$
By construction $\alpha_1$ is the identity map of $\HH^0_{E_1, X}(X)$, and the maps $\alpha_\ell$ fit into commutative diagrams:
$$\xymatrix{
\HH^0_{E_1, X}(\,\Gamma^{\ell+m} X\,)\ar[rr]^-{(\Delta_{\ell,m})_*}&&\HH^0_{E_1, X}\big(\,(\Gamma^{\ell}\otimes\Gamma^m)X\,\big)\\
\Gamma^{\ell+m}\big(\,\HH^0_{E_1, X}(X)\,\big)\ar[u]_-{\alpha_{\ell+m}}\ar[rr]^-{\Delta_{\ell,m}}&& \Gamma^\ell\big(\,\HH^0_{E_1, X}(X)\,\big)\,\otimes\, \Gamma^m\big(\,\HH^0_{E_1, X}(X)\,\big)\ar[u]_-{\alpha_{\ell}\cup \alpha_m}
}\;,$$
$$\xymatrix{
\HH^0_{E_1, X}\big(\,(\Gamma^{\ell}\otimes\Gamma^m) X\,\big)\ar[rr]^-{\mathrm{mult}_*}&&\HH^0_{E_1, X}(\,\Gamma^{\ell+m} X\,)\\
\Gamma^\ell\big(\,\HH^0_{E_1, X}(X)\,\big)\,\otimes\, \Gamma^m\big(\,\HH^0_{E_1, X}( X)\,\big)\ar[u]_-{\psi_{\ell}\cup \psi_m}\ar[rr]^-{\mathrm{mult}}&& \Gamma^{\ell+m}\big(\,\HH^0_{E_1, X}(X)\,\big)\ar[u]_-{\psi_{\ell+m}}
}\;.$$

Now we can use theorem \ref{thm-untwist} to convert this rather trivial $\HH^0_{E_1, X}(-)$ construction into the sought after $\HH^*_{X}(-)$-construction. Indeed by theorem \ref{thm-untwist}, we have graded isomorphisms (the equality on the right comes from the fact that $X$ is an injective object of $\mathcal{F}$):
$$\HH^*_X(X^{(1)})\xrightarrow[\simeq]{\phi_X^{-1}} \HH^*_{E_1, X}(X)=\HH^0_{E_1, X}(X)\;. $$
Theorem \ref{thm-untwist} also yields graded monomorphisms, compatible with cup products:
$$\iota_\ell: \HH^0_{E_1, X}(\Gamma^{\ell}X)\to \HH^*_{X}(\Gamma^{\ell}X^{(1)})\;. $$
Since for our $X$ we have a canonical isomorphism $\Gamma^{\ell}X^{(1)}\simeq \Gamma^{\ell\,(1)}X$ we define graded maps $\psi_\ell$ satisfying the required properties by the formula:
$$\psi_\ell= \iota_\ell\circ \alpha_\ell\circ \Gamma^d(\phi_X^{-1})\;.$$

\subsection{From theorem \ref{thm-univ-classes} to theorem \ref{thm-untwist}: an overview}\label{subsec-sens-dur}\label{subsec-strateg}
Now we assume that theorem \ref{thm-univ-classes} holds. We are going to use it to prove theorem \ref{thm-untwist}. 
Recall that $E_r$ denotes the graded vector space with $E_r^{2i}=\kk$ for $0\le i<p^r$ and which is zero in other degrees. Let $E_r^{(1)}$ be the same vector space with grading multiplied by $p$, i.e. $(E_r^{(1)})^{2pi}=\kk$ for $0\le i<p^r$ and $E_r^{(1)}$ is zero in the other degrees. Then for all positive $r$ we have an isomorphism of graded vector spaces:
$$E_{r} \simeq E_{r-1}^{(1)}\otimes E_1\;.$$
In particular, for all $r\ge 1$ we can construct the isomorphism $\phi_F:\HH^*_{E_r, X}(F)\simeq \HH^*_X(F^{(r)})$ of theorem \ref{thm-untwist} as the composition of the chain of $r$ isomorphisms
$$\HH^*_{E_r, X}(F)\xrightarrow[\simeq]{}\HH^*_{E_{r-1}, X}(F^{(1)})\xrightarrow[\simeq]{}\cdots \xrightarrow[\simeq]{}\HH^*_{E_0,X}(F^{(r)})=\HH^*_{X}(F^{(r)})$$
which are provided by the next result.
\begin{theorem}\label{thm-untwist-bis}
Let $(\mathcal{F},X)$ equal $(\mathcal{P}_\kk(1,1), \mathrm{gl})$ or $(\mathcal{P}_\kk,\Lambda^2)$ or  $(\mathcal{P}_\kk,S^2)$. In the last two cases, assume moreover that $\kk$ has odd characteristic $p$. 
Let $E$ be a graded vector space of finite total dimension and concentrated in even degrees, and let $E^{(1)}$ denote the same vector space, with homothetic grading defined by $E^{i}=(E^{(1)})^{pi}$.
Then there is a graded isomorphism, natural with respect to $F$ and compatible with cup products:
$$\phi_F^{E}:\HH^*_{E^{(1)}\otimes E_1, X}(F)\simeq \HH^*_{E, X}(F^{(1)})\;.$$
\end{theorem}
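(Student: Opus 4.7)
The plan is to define $\phi_F^E$ using the universal classes of Theorem~\ref{thm-univ-classes}, and then show it is an isomorphism via an $\HH^*_X$-acyclic resolution extracted from those same classes. For the construction, the $\psi_\ell$ combined with Lemma~\ref{lm-calcul-prelim} give, for each $\ell$, a graded collection of classes in $\Ext^*_{\mathcal{F}}(\Gamma^{p\ell}X, \Gamma^{\ell(1)}X)$ parametrized by $\Gamma^\ell E_1$. I would parametrize this construction by $E$: using the Hopf comultiplication to map $\Gamma^{pd, E}X$ into a tensor product $\bigotimes_\alpha \Gamma^{p\ell_\alpha}X$ indexed by a basis of $E$, then applying the $\psi_{\ell_\alpha}$ factor by factor, and finally using the exponential isomorphism to identify the resulting tensor product with $(\Gamma^{d, E^{(1)}\otimes E_1}X)^{(1)}$ (up to the appropriate grading shift), one obtains a natural morphism $\Gamma^{pd, E}X \to (\Gamma^{d, E^{(1)}\otimes E_1}X)^{(1)}$ in the derived category of $\mathcal{F}$. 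Yoneda composition of this morphism with the Frobenius twist of an LHS extension class then defines $\phi_F^E$. Naturality in $F$ and cup-product compatibility follow from the analogous properties in Theorem~\ref{thm-univ-classes}(2).

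To prove $\phi_F^E$ is an isomorphism, I would reduce formally to the base case $E = \kk$, i.e.\ $\HH^*_{E_1, X}(F) \simeq \HH^*_X(F^{(1)})$; the reduction exploits the naturality in $E$ and the cup-product compatibility. In the base case, both sides are cohomological $\delta$-functors in $F$, and $\phi_F^\kk$ is a morphism of such. Degree-zero agreement follows from $\psi_1 = \Id$ (Theorem~\ref{thm-univ-classes}(1)) together with Lemma~\ref{lm-calcul-prelim}. The standard universal-$\delta$-functor argument does not propagate this to higher degrees, because $F^{(1)}$ is not injective when $F$ is, so the RHS is not effaceable by the injectives of $\mathcal{F}$. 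To overcome this, my plan is to exploit the fact (recalled in section~\ref{subsec-univ-classes}) that the $\psi_\ell$ themselves arise from an explicit bounded resolution of $\Gamma^{d(1)}X$ by $\HH^*_X$-acyclic objects, namely tensor products of divided powers of $X$, whose acyclicity follows from the Cauchy filtration in the $\Lambda^2, S^2$ cases and from injectivity of symmetric tensors in the $\mathrm{gl}$ case. Applying this resolution functorially produces a bicomplex computing $\HH^*_X(F^{(1)})$ whose terms identify with those of the complex computing $\HH^*_{E_1, X}(F)$, giving the desired isomorphism.

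The main obstacle is promoting the universal classes -- a compatible family of cohomology classes -- into a genuine resolution that is functorial in $F$ and whose columns identify correctly term by term with the LHS. The multiplicativity and comultiplicativity of the $\psi_\ell$ in Theorem~\ref{thm-univ-classes}(2) are the essential algebraic inputs for this gluing, but they must be combined with careful bookkeeping of the gradings of $E$ and $E_1$ and of the Hopf structure on $\bigoplus_\ell \Gamma^\ell$. Once this resolution is in place, the degree-zero comparison upgrades to an isomorphism in all degrees, and the general $E$ case follows from the base case by the formal reduction.
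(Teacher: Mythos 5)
Your construction of $\phi_F^E$ is essentially the paper's: the classes $\gamma^{k}(e_j)=\psi_k(e_j^{\otimes k})$ assembled via the comultiplication and the exponential isomorphism are exactly the classes $c_\nu$ used there, and the formal skeleton (Yoneda composition makes $\phi^E$ a morphism of $\delta$-functors, cup compatibility reduces to the relation $c_\lambda\cup c_\mu=\Delta^{\lambda+\mu}_{\lambda,\mu\,*}c_{\lambda+\mu}$, and one must check the map on injectives) also matches. The divergence, and the gaps, are in how you propose to verify the isomorphism. First, the reduction to $E=\kk$ is not formal: the summands of $\HH^*_{E,X}(F^{(1)})$ for multidimensional $E$ are $\Ext$-groups out of tensor products $\Gamma^{\mu_1}X\otimes\cdots\otimes\Gamma^{\mu_\ell}X$, and no naturality-in-$E$ or cup-product argument converts these into instances of the $E=\kk$ statement. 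Such a reduction does exist, but it runs through the exact parametrization adjunction $\HH^*_{E,\mathrm{gl}}(B)\simeq\HH^*_{\mathrm{gl}}(B_E)$ together with the identification $(F^{(1)})_E\simeq (F_{E^{(1)}})^{(1)}$, and in the $S^2,\Lambda^2$ cases additionally through sum-diagonal adjunction to bifunctors; this is genuine functor technology (it is what the paper uses in its Step 3), not a formality you can wave at.

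The more serious gap is the base case on injectives. The resolution you invoke from section \ref{subsec-univ-classes} is a coresolution of the \emph{twisted divided power} $\Gamma^{d\,(1)}$ by $\HH^*_X$-acyclic objects (symmetric tensors $S^\mu X$, not tensor products of divided powers); it computes $\HH^*_X(\Gamma^{d\,(1)}X)$, i.e.\ cohomology with that particular twisted coefficient. For the theorem you must compute $\Ext^*_{\mathcal{F}}(\Gamma^{pd,E}X,\,F^{(1)})$ for an \emph{arbitrary} injective $F=S^\mu$, where the twisted object sits in the coefficient slot; the universal-class resolution gives you no bicomplex for this, and replacing it by an injective (Troesch-type) coresolution of $S^{\mu\,(1)}$ just relocates the problem: the assertion that its terms ``identify with those of the complex computing $\HH^*_{E_1,X}(F)$'' with collapsing differentials is precisely the formality-type statement whose proof is the hard content. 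In the paper this content is supplied by the explicit computation for $Y^{\otimes d}$ (sum-diagonal adjunction, K\"unneth, and the vanishing lemmas \ref{lm-annul1}--\ref{lm-annul2}, packaged through the cup-product surjectivity criterion of lemma \ref{lm-strateg}), followed by descent to symmetric tensors via the nontrivial $\Si_\mu$-coinvariant isomorphisms of \cite{FFSS,TouzeENS} in proposition \ref{prop-step-3}. Without an argument at this level of detail your plan does not close, so as it stands the proof is incomplete at its decisive step.
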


Thus, to prove theorem \ref{thm-untwist} it suffices to prove theorem \ref{thm-untwist-bis}. In the remainder of section \ref{subsec-sens-dur}, we describe the strategy of the proof of theorem  \ref{thm-untwist-bis}. This strategy is fairly general, i.e. it is not properties specific to strict polynomial functors. The reader can safely imagine that $\mathcal{F}$ is the category of rational $GL_n$-modules, and replace $\HH^*_{E,X}(-)$ by $\bigoplus_{d\ge 0}\mathrm{Ext}^*_{GL_n}(\Gamma^{d}\mathrm{Hom}_\kk(E,\mathfrak{gl}_n),-)$. Functorial techniques are relegated to section \ref{subsec-pf}, in which we implement concretely the strategy.

Recall that any functor in $\mathcal{F}$ splits as a finite direct sum of homogeneous functors, so it suffices to prove theorem \ref{thm-untwist-bis} for  $F$ homogeneous. Moreover there are no nontrivial $\Ext$ between homogeneous functors of different degrees. Since $X$ is homogeneous of degree $2$, both $\HH^*_{E, X}(F^{(1)})$ and $\HH^*_{E^{(1)}\otimes E_1, X}(F)$ are zero unless $F$ has even degree, say $2d$.
Thus, to prove theorem \ref{thm-untwist}, we have to construct a graded isomorphism $\phi_F^E$, natural with respect to $F$ homogeneous of degree $2d$:
$$\underbrace{\HH^*_{E^{(1)}\otimes E_1, X}\left(F\right)}_{
\text{\small $
=\Ext^*_{\mathcal{F}}\left(\Gamma^{d}\mathrm{Hom}_\kk(E^{(1)}\otimes E_1,X)\,,\,F\right)
$}
}\xrightarrow[]{\qquad \text{\footnotesize $\phi_F^E$}\qquad} \underbrace{\HH^*_{E, X}\left(F^{(1)}\right)}_{
\text{\small
$=\Ext^*_{\mathcal{F}}\left(\Gamma^{dp}\mathrm{Hom}_\kk(E, X),F^{(1)}\right)
$}}\;.$$

Fix a basis $(b_i)_{1\le i\le \ell}$ of $E$ with the $b_i$ homogeneous, and a basis $(e_0,\dots,e_{p-1})$ of $E_1$, with $\deg e_j=2j$. Then, using the exponential isomorphism for divided powers, one may rewrite the domain and codomain of the sought-after $\phi_F^E$ as direct sums:
\begin{align*}
\HH^*_{E^{(1)}\otimes E_1, X}\left(F\right)&=\bigoplus_{\nu\in\Lambda(d,\ell,p)}\Ext^*_{\mathcal{F}}\left(\Gamma^{\nu}X\,,\,F\right)\,[ps(\nu)+t(\nu)]\;, \\
\HH^*_{E, X}\left(F^{(1)}\right)&=\bigoplus_{\mu\in \Lambda(pd,\ell,1)}\Ext^*_{\mathcal{F}}\left(\Gamma^{\mu},F^{(1)}\right)\,[s(\mu)]\;.
\end{align*}
In these decompositions, $\Lambda(a,\ell,m)$ denotes the set of matrices of nonnegative integers $(\nu_{ij})$ with $1\le i\le \ell$ and $0\le j<m$ such that $\sum_{i,j} \nu_{i,j}=a$, and the brackets indicate a shift of cohomological grading, for example $\Ext^{k}_{\mathcal{F}}\left(\Gamma^{\mu},F^{(1)}\right)[s(\mu)]$ is concentrated in degree $k+s(\mu)$. The shifts are weighted sums of the coefficients of the matrices defined by $s(\nu)=\sum_{i,j}\nu_{i,j}\deg(b_i)$ and $t(\nu)=\sum_{i,j} \nu_{i,j}\deg(e_j)$.

By summing the coefficients in each row of a matrix $\nu\in \Lambda(d,\ell,p)$ an multiplying the result by $p$, one obtains a matrix $\overline{\nu}\in \Lambda(pd,\ell,1)$. We construct the graded morphism $\phi_F$ as follows. For each $\nu\in \Lambda(d,\ell,p)$, we choose a cohomology class 
$$c_\nu\in \Ext^{t(\nu)}_{\mathcal{F}}(\Gamma^{\overline{\nu}}X,\Gamma^{\nu}X^{(1)})\;\subset\; \HH^{t(\nu)}_{E, X}(\Gamma^{\nu}X^{(1)})\;.$$ 
We define the restriction of $\phi_F$ to the summand of $\HH^*_{E^{(1)}\otimes E_1, X}(F)$ indexed by $\nu$ as the following composition:
$$\xymatrix{
\Ext^*_{\mathcal{F}}(\Gamma^\nu X,F)\; [ps(\nu)+t(\nu)]\ar@/_2pc/@{-->}[rrdd]_-{\phi_F}\ar[rr]^-{-^{(1)}}&&\Ext^*_{\mathcal{F}}(\Gamma^\nu X^{(1)},F^{(1)})\; [ps(\nu)+t(\nu)]\ar[d]^-{- \circ c_\nu}\\
&& \Ext^*_{\mathcal{F}}(\Gamma^{\overline{\nu}} X,F^{(1)})\;[s(\overline{\nu})]\ar@{^{(}->}[d]\\
&& \HH^*_{E, X}\left(F^{(1)}\right)\\
}$$
where `$\circ$' stands for the Yoneda splice of extensions.
At this point, we do not explain which classes $c_\nu$ we choose. However, whatever the choice of $c_\nu$ is, the following lemma is clear from the properties of Yoneda splices.
\begin{lemma}
The map $\phi^E:\HH^*_{E^{(1)}\otimes E_1, X}(-)\to \HH^*_{E, X}(-^{(1)})$ is a morphism of $\delta$-functors.
\end{lemma}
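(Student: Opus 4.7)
The plan is to reduce the statement to elementary building blocks. By construction, on the summand indexed by $\nu \in \Lambda(d, \ell, p)$ the map $\phi_F^E$ factors as the composition of three elementary operations: (i) the Frobenius twist on extensions $(-)^{(1)}: \Ext^*_{\mathcal{F}}(\Gamma^\nu X, F) \to \Ext^*_{\mathcal{F}}(\Gamma^\nu X^{(1)}, F^{(1)})$, (ii) left Yoneda splice with the fixed class $c_\nu$, and (iii) inclusion into the direct summand of $\HH^*_{E, X}(F^{(1)})$ indexed by $\overline{\nu}$. Since a composition, and a direct sum, of morphisms of $\delta$-functors is again a morphism of $\delta$-functors, it is enough to check the property separately for each of these three pieces.

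First I would check (i): precomposition by $I^{(1)}$ is an exact endofunctor of $\mathcal{F}$, so it sends a short exact sequence $0 \to F' \to F \to F'' \to 0$ to another short exact sequence $0 \to F'^{(1)} \to F^{(1)} \to F''^{(1)} \to 0$. At the level of representing resolutions the twist is simply applied termwise, so the induced map on Yoneda extensions commutes with the connecting homomorphisms of the two resulting long exact $\Ext$-sequences. Next I would verify (ii): compatibility with connecting homomorphisms is a classical property of the Yoneda product, namely that splicing on one side with a fixed class is a natural transformation of $\delta$-functors in the opposite variable (the standard reference is Cartan--Eilenberg). Finally, (iii) comes from a fixed direct sum decomposition of $\Gamma^{dp}\mathrm{Hom}_\kk(E, X)$ in the first variable of $\Ext$, so it is tautologically natural in $F$ and compatible with the connecting homomorphisms induced in either $\delta$-functor.

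Concatenating these three checks across the finite indexing set $\Lambda(d,\ell,p)$ yields the claim. I do not expect any genuine obstacle in carrying out this plan: the lemma is formal, and deliberately so, because its purpose is to isolate the purely functorial compatibility from the substantive content of Theorem~\ref{thm-untwist-bis}. The real work -- choosing the classes $c_\nu$ so that $\phi^E$ is actually an isomorphism and not merely a natural transformation -- lies downstream and is where the universal classes of Theorem~\ref{thm-univ-classes} will have to be used.
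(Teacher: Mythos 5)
Your decomposition of $\phi_F^E$ into the Frobenius-twist map on $\Ext$, Yoneda splice with the fixed class $c_\nu$, and the inclusion of the summand indexed by $\overline{\nu}$ is exactly how the map is defined in the paper, and your three checks are precisely what the paper means when it says the lemma "is clear from the properties of Yoneda splices" (exactness of precomposition by $I^{(1)}$, associativity of the splice, and naturality of a decomposition in the contravariant argument). So the proposal is correct and follows essentially the same route as the paper, just with the formal verifications written out.
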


To prove that $\phi^E$ is an isomorphism of $\delta$-functors, it suffices to prove that $\phi_F^E$ is an isomorphism for all injectives $F$, by the following basic lemma.
\begin{lemma}\label{lm-basic}
Let $\mathcal{A}$ be an abelian category with enough injectives, let $\mathcal{B}$ be an abelian category, and let $S^*,T^*:\mathcal{A}\to \mathcal{B}$ be two $\delta$-functors satisfying $T^i=S^i=0$ for negative $i$.  Then a morphism of $\delta$-functors $\phi:S^*\to T^*$ is an isomorphism if and only if  $\phi_A:S^*(A)\to T^*(A)$ is an isomorphism for all injective objects $A$ in $\mathcal{A}$. 
\end{lemma}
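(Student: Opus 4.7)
The forward direction is immediate: an isomorphism of $\delta$-functors is an isomorphism on every object, in particular on injective objects. For the converse, my plan is to induct on the cohomological degree $i$, showing that under the hypothesis on injectives, $\phi^i_M$ is an isomorphism for every object $M$ of $\mathcal{A}$.

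Fixing $i\geq 0$ and assuming the conclusion for all $j<i$, I split the inductive step into two sub-steps: first show that $\phi^i_M$ is a monomorphism for every $M$, and then that it is an epimorphism. Both sub-steps begin by choosing an embedding $M\hookrightarrow I$ into an injective, with cokernel $N=I/M$, and comparing the resulting long exact sequences for $S^*$ and $T^*$. For the monomorphism sub-step, I apply the classical four-lemma to the four-term exact portion $S^{i-1}(I)\to S^{i-1}(N)\to S^i(M)\to S^i(I)$ and its analogue for $T^*$: the three outer vertical maps $\phi^{i-1}_I$, $\phi^{i-1}_N$, $\phi^i_I$ are isomorphisms (the first and third by the hypothesis on injectives, the second by the inductive hypothesis), so $\phi^i_M$ must be monic. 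The convention $S^{-1}=T^{-1}=0$ makes the same argument cover the base case $i=0$ uniformly.

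For the epimorphism sub-step, I would run a direct diagram chase, phrased in $\mathcal{B}$ via the Freyd--Mitchell embedding or Mac Lane's pseudo-elements. Given $y\in T^i(M)$, push it to $T^i(I)$, lift along the isomorphism $\phi^i_I$ to some $x\in S^i(I)$, use that $\phi^i_N$ is already monic (by the sub-step just proved, applied at $N$) to conclude that the image of $x$ in $S^i(N)$ vanishes, so that $x$ pulls back to some $z\in S^i(M)$; finally correct $z$ by the image under $\partial$ of an element of $S^{i-1}(N)$ coming from the isomorphism $\phi^{i-1}_N$ supplied by the inductive hypothesis, so that $\phi^i_M(z-\partial w)=y$. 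For $i=0$ this last correction is vacuous since $T^0(M)\to T^0(I)$ is already injective by left exactness.

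The step requiring the most attention is the interlocking of the two sub-steps across degrees: the epimorphism sub-step at degree $i$ invokes the monomorphism sub-step at degree $i$ applied to the auxiliary object $N$, and one must verify that it does not require any information about $\phi^{i+1}$, which would create a circular induction. The chase sketched above only uses $\phi^i_I$, $\phi^i_N$ and $\phi^{i-1}_N$, so the induction closes cleanly.
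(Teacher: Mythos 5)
Your argument is correct. The paper states this as a ``basic lemma'' and omits the proof entirely, so there is nothing to compare against; your dimension-shifting induction (embed $M\hookrightarrow I$ with cokernel $N$, compare the two long exact sequences, prove monomorphy at degree $i$ for all objects first and then epimorphy) is exactly the standard argument the paper is implicitly invoking, and your check that the epi step only consumes $\phi^i_I$, $\phi^i_N$ and $\phi^{i-1}_N$ — so the induction is not circular — is the one point that genuinely needed saying. Two small remarks: the epimorphism half is literally the dual four lemma applied to $S^{i-1}(N)\to S^i(M)\to S^i(I)\to S^i(N)$ and its $T$-analogue, so you can avoid Freyd--Mitchell altogether (and if you prefer the element chase, Mac Lane's calculus of members in an arbitrary abelian category already accommodates the ``subtraction'' step via the usual difference-of-members rule, with the correcting class produced by $\partial$ and the naturality of $\phi$ with respect to $\delta$); and it is worth stating explicitly that exactness of $T^i(M)\to T^i(I)\to T^i(N)$ is what makes the image of $\bar y$ in $T^i(N)$ vanish before you invoke injectivity of $\phi^i_N$.
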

Finally, it is claimed in theorem \ref{thm-untwist} that $\phi_F^E$ is compatible with cup products. Let $E'$ denote the tensor product $E^{(1)}\otimes E_1$. Then the homogeneous elements $b_i^{(1)}\otimes e_j$, $1\le i\le \ell$, $à\le j<p$ form a basis of $E'$. For all $d$, $e$ we have canonical decompositions (the first one has already been used in order to decompose the source of $\phi_F$):
\begin{align*}
&\Gamma^{d+e}\mathrm{Hom}_\kk(E', X^{(1)})\simeq \bigoplus_{
\text{\footnotesize $
\nu\in \Lambda(d+e,\ell,p)
$}} \Gamma^\nu X^{(1)}\;,\\
&\Gamma^d\mathrm{Hom}_\kk(E', X^{(1)})\otimes\Gamma^e\mathrm{Hom}_\kk(E', X^{(1)})\simeq \bigoplus_{\text{\footnotesize
$
\begin{array}{c}
\lambda\in \Lambda(d,\ell,p)\\
\mu\in \Lambda(e,\ell,p)
\end{array}
$
}}
\Gamma^\lambda X^{(1)}\otimes \Gamma^\mu X^{(1)} 
\end{align*}
We let $\Delta_{\lambda,\mu}^\nu:\Gamma^\nu X^{(1)}\to \Gamma^\lambda X^{(1)}\otimes\Gamma^\mu X^{(1)}$ be the components of the canonical inclusion $\Delta_{d,e}$
in this decomposition. Note that $\Delta_{\lambda,\mu}^\nu=0$ if $\nu\ne \lambda+\mu$. 
The next lemma is a formal consequence of naturality properties of cup products.
\begin{lemma}\label{lm-compat-classes}
The morphism $\phi_F^E$ is compatible with cup products if and only if for all matrices $\lambda$ and $\mu$, the following equality holds in $\HH^*_{E, X}(\Gamma^\lambda X^{(1)}\otimes\Gamma^\mu X^{(1)})$:
$$c_\lambda\cup c_\mu= \Delta_{\lambda,\mu\;*}^{\lambda+\mu}c_{\lambda+\mu}\;.$$
\end{lemma}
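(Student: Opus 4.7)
The plan is to reduce the claim to a formal computation with Yoneda splices, external cups, and the classes $c_\nu$. By bilinearity, and by the direct sum decompositions of source and target of $\phi^E$, it suffices to verify the compatibility
\[
\phi_{F\otimes G}^E(\alpha\cup\beta)\;=\;\phi_F^E(\alpha)\cup\phi_G^E(\beta)
\]
for every pair of homogeneous classes $\alpha\in\Ext^*_{\mathcal{F}}(\Gamma^\lambda X,F)$ and $\beta\in\Ext^*_{\mathcal{F}}(\Gamma^\mu X,G)$ supported in fixed summands indexed by matrices $\lambda$ and $\mu$. Throughout I would freely use the standard properties of the Yoneda splice (associativity) and the external cup product (naturality with respect to composition, i.e.\ $(u\circ v)\sqcup(u'\circ v')=(u\sqcup u')\circ(v\sqcup v')$), together with the fact that the Frobenius twist $^{(1)}$ is exact and monoidal, so it commutes with both operations.

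First I would compute the left-hand side. Since $\alpha\sqcup\beta$ is nonzero only on the $(\lambda,\mu)$-summand of $\Gamma^{|\lambda|}\mathrm{Hom}_\kk(E',X)\otimes\Gamma^{|\mu|}\mathrm{Hom}_\kk(E',X)$, the component of $\alpha\cup\beta$ in the summand of $\HH^*_{E',X}(F\otimes G)$ indexed by $\lambda+\mu$ equals $(\alpha\sqcup\beta)\circ\Delta_{\lambda,\mu}^{\lambda+\mu}$. Applying $\phi^E$ then yields
\[
\phi_{F\otimes G}^E(\alpha\cup\beta)\;=\;(\alpha^{(1)}\sqcup\beta^{(1)})\circ\bigl(\Delta_{\lambda,\mu}^{\lambda+\mu}\bigr)^{(1)}\circ c_{\lambda+\mu}.
\]
For the right-hand side, by definition $\phi_F^E(\alpha)\sqcup\phi_G^E(\beta)=(\alpha^{(1)}\circ c_\lambda)\sqcup(\beta^{(1)}\circ c_\mu)=(\alpha^{(1)}\sqcup\beta^{(1)})\circ(c_\lambda\sqcup c_\mu)$, and pulling back along the $(\overline\lambda,\overline\mu)$-component of the diagonal of $\Gamma^*\mathrm{Hom}_\kk(E,X)$ gives
\[
\phi_F^E(\alpha)\cup\phi_G^E(\beta)\;=\;(\alpha^{(1)}\sqcup\beta^{(1)})\circ(c_\lambda\sqcup c_\mu)\circ\Delta_{\overline\lambda,\overline\mu}^{\overline\lambda+\overline\mu}.
\]
Both expressions are of the form $(\alpha^{(1)}\sqcup\beta^{(1)})\circ(\,\cdot\,)$ with the parenthesized factor independent of $\alpha,\beta,F,G$, so equality for all choices is equivalent to the single identity $\bigl(\Delta_{\lambda,\mu}^{\lambda+\mu}\bigr)^{(1)}\!\circ c_{\lambda+\mu}=(c_\lambda\sqcup c_\mu)\circ\Delta_{\overline\lambda,\overline\mu}^{\overline\lambda+\overline\mu}$. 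Its left-hand side is by definition $\Delta_{\lambda,\mu\,*}^{\lambda+\mu}c_{\lambda+\mu}$, and its right-hand side is by definition the internal cup product $c_\lambda\cup c_\mu$ in $\HH^*_{E,X}(\Gamma^\lambda X^{(1)}\otimes\Gamma^\mu X^{(1)})$, which settles the ``if'' direction.

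For the ``only if'' direction I would specialize: take $F=\Gamma^\lambda X$, $G=\Gamma^\mu X$, and $\alpha=\mathrm{id}_{\Gamma^\lambda X}$, $\beta=\mathrm{id}_{\Gamma^\mu X}$ viewed in the summands indexed by $\lambda$ and $\mu$. Then $\phi_F^E(\alpha)=c_\lambda$, $\phi_G^E(\beta)=c_\mu$, while the $(\lambda+\mu)$-component of $\alpha\cup\beta$ is $\Delta_{\lambda,\mu}^{\lambda+\mu}$, whose image under $\phi^E$ is $\Delta_{\lambda,\mu\,*}^{\lambda+\mu}c_{\lambda+\mu}$; the assumed compatibility then forces the displayed equality. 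The argument is entirely formal and uses no property specific to strict polynomial functors. The only real obstacle is bookkeeping: one must be careful to distinguish the diagonal of $\Gamma^*\mathrm{Hom}_\kk(E^{(1)}\otimes E_1,X)$ (which produces the matrices $\lambda,\mu$) from the diagonal of $\Gamma^*\mathrm{Hom}_\kk(E,X)$ (which produces $\overline\lambda,\overline\mu$), and to check once and for all that Frobenius twist commutes with external cup and Yoneda splice.
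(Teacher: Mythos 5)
Your proposal is correct, and it is essentially the argument the paper has in mind: the paper dismisses the lemma as ``a formal consequence of naturality properties of cup products,'' and your computation—decomposing $\alpha\cup\beta$ via the components $\Delta_{\lambda,\mu}^{\nu}$ (nonzero only for $\nu=\lambda+\mu$), using that the twist commutes with Yoneda splice and external products, and specializing to $\alpha=\mathrm{id}_{\Gamma^\lambda X}$, $\beta=\mathrm{id}_{\Gamma^\mu X}$ for the converse—is precisely the omitted formal verification (note that the interchange of $\sqcup$ with Yoneda composition carries no sign here since the classes $c_\nu$ sit in even degree).
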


\subsection{From theorem \ref{thm-univ-classes} to theorem \ref{thm-untwist}: detailed proof}\label{subsec-pf}
In this section, we implement the strategy just described, namely we define the classes $c_\nu$ based on the classes provided by theorem \ref{thm-univ-classes} and then we prove that these classes have all the required properties. Checking the isomorphism on injectives requires classical computation techniques which are specific to `functor technology' (i.e. they have no full equivalent in the category of representations of groups). 

\subsubsection{Definition of the classes $c_\nu$}
Recall that $E_1$ is a graded vector space with homogeneous basis $(e_0,\dots,e_{p-1})$ with $\deg(e_j)=2j$. Lemma \ref{lm-calcul-prelim} yields a graded isomorphism between $E_1$ and $\HH^*_X(X^{(1)})$, and we still denote by $(e_0,\dots,e_{p-1})$ the corresponding homogeneous basis of $\HH^*_X(X^{(1)})$.
As there is a canonical isomorphism ${}^{(1)}X=X^{(1)}$, the graded morphisms $\psi_k$ of theorem \ref{thm-univ-classes} can be interpreted as a partial divided power structure on $\HH_X^{*}(\Gamma^{*}X^{(1)})$. We define `divided power classes' $\gamma^k(e_i)$ by letting $\gamma^0(e_i)=1\in \HH^0_X(\Gamma^{0}X^{(1)})=\kk$, and for $k>0$:
$$\gamma^k(e_i)=\psi_k(e_i^{\otimes k}) \quad\in \HH_X^{2ki}(\Gamma^{k}X^{(1)}) = \Ext^{2ki}_{\mathcal{F}}(\Gamma^{pk}X,\Gamma^{k}X^{(1)})\;.$$

For all matrices $\nu=(\nu_{i,j})\in \Lambda(d,\ell,p)$, there are canonical inclusions $\Delta_\nu:\Gamma^{\overline{\nu}}X\to \Gamma^{p\nu}X$ and we define the class 
$c_\nu\in \Ext^{t(\nu)}_{\mathcal{F}}(\Gamma^{\overline{\nu}}X,\Gamma^{\nu}X^{(1)})$
by the formula:
$$c_\nu= \Delta_\nu^*\left(\bigsqcup_{i,j}\gamma^{\nu_{i,j}}(e_j)\right)\;.$$
It is clear from the properties of the maps $\psi_\ell$ given in theorem \ref{thm-univ-classes} that these classes satisfy 
$c_\lambda\cup c_\mu= \Delta_{\lambda,\mu\;*}^{\lambda+\mu}c_{\lambda+\mu}$. Hence by lemma \ref{lm-compat-classes} the morphisms $\phi_F$ based on the classes $c_\mu$ are compatible with cup products. It remains to prove that the morphisms $\phi_F$ are isomorphisms. By lemma \ref{lm-basic}, it suffices to prove the isomorphism when $F$ is injective. We will proceed in several steps, starting with easy injectives (tensor product functors of low degree) and moving gradually towards general injectives. 

\subsubsection{The maps $\phi_F$ are isomorphisms (Step 1)}\label{subsubsec-Step1}
If $\mathcal{F}$ denotes the category of strict polynomial functors we let $Y=\otimes^2$ and if $\mathcal{F}$ denotes the category of strict polynomial bifunctors, we let $Y=\mathrm{gl}$. In both cases $Y$ is injective. As a first step towards the proof of theorem \ref{thm-untwist-bis}, we prove the following result.
\begin{proposition}\label{prop-step-1}
The map $\phi_Y^E$ is an isomorphism.
\end{proposition}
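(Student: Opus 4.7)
My plan is to prove Proposition \ref{prop-step-1} by computing both sides of $\phi_Y^E$ explicitly, exploiting the injectivity of $Y$ together with the identification $\HH^*_X(X^{(1)})\simeq E_1$ provided by lemma \ref{lm-calcul-prelim}. Since $Y$ is injective in $\mathcal{F}$ and homogeneous of degree $2$, all $\Ext$ groups appearing in $\HH^*_{E^{(1)}\otimes E_1,X}(Y)$ collapse to $\Hom$ in degree zero, and only the summand $d=1$ contributes for degree reasons. Using $\Gamma^1=\mathrm{Id}$ and Hom--tensor adjunction with the finite-dimensional graded vector space $W=E^{(1)}\otimes E_1$, this reduces the LHS to
\[
\HH^*_{E^{(1)}\otimes E_1,X}(Y)\;\simeq\;W\otimes \Hom_{\mathcal{F}}(X,Y)\;\simeq\;E^{(1)}\otimes E_1,
\]
where $\Hom_{\mathcal{F}}(X,Y)\simeq\kk$ in each of the three cases (the canonical inclusions $X\hookrightarrow\otimes^2$, or the identity $\mathrm{gl}\to\mathrm{gl}$).

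For the right-hand side I would proceed similarly. Since $Y^{(1)}$ is homogeneous of degree $2p$, only the $d=p$ summand contributes to $\HH^*_{E,X}(Y^{(1)})$. Fixing a homogeneous basis $(b_i)_{1\le i\le \ell}$ of $E$ and applying the exponential isomorphism for divided powers gives
\[
\HH^*_{E,X}(Y^{(1)})\;\simeq\;\bigoplus_{\mu\in\Lambda(p,\ell)}\Ext^*_{\mathcal{F}}(\Gamma^{\mu}X,Y^{(1)})\,[s(\mu)].
\]
The central input is then the computation
\[
\Ext^*_{\mathcal{F}}(\Gamma^{\mu}X,Y^{(1)})\;\simeq\;\begin{cases} E_1 & \text{if $\mu_i=p$ for some $i$ and $\mu_j=0$ otherwise,}\\ 0 & \text{in all other cases.}\end{cases}
\]
The non-vanishing case reduces, by $\Hom_{\mathcal{F}}(X,Y)\simeq\kk$, to the computation of $\Ext^*_{\mathcal{F}}(\Gamma^p X,X^{(1)})\simeq E_1$ contained in lemma \ref{lm-calcul-prelim}. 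Granting the vanishing, the RHS also becomes $E^{(1)}\otimes E_1$ as a graded vector space, matching the LHS.

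Having matched dimensions, I would close the argument by unwinding $\phi_Y^E$ on each summand. The $(i,j)$-summand of the LHS corresponds to the matrix $\nu\in\Lambda(1,\ell,p)$ with a single $1$ at position $(i,j)$, and $\phi_Y^E$ is Yoneda splice with $c_\nu=\gamma^1(e_j)$. By the normalization $\psi_1=\mathrm{Id}$ of theorem \ref{thm-univ-classes}(1), $\gamma^1(e_j)=e_j$ is (via lemma \ref{lm-calcul-prelim}) the degree-$2j$ basis element of $\HH^*_X(X^{(1)})\simeq E_1$; pushed forward along $X^{(1)}\to Y^{(1)}$, it generates the degree-$2j$ component of the unique non-vanishing $\mu=p\mathbf{e}_i$ summand on the RHS, so $\phi_Y^E$ is the identity on each one-dimensional graded line. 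The main obstacle in this strategy is the vanishing $\Ext^*_{\mathcal{F}}(\Gamma^{\mu}X,Y^{(1)})=0$ for $\mu$ supported on at least two indices: this expresses that $Y^{(1)}$ admits no nonzero extensions from proper tensor factorizations $\Gamma^{\mu_1}X\otimes\cdots\otimes\Gamma^{\mu_\ell}X$ of total degree $2p$, and should be obtained by a direct $\Ext$ computation using the Friedlander--Suslin resolution of $I^{(1)}$; in the orthogonal and symplectic cases one then reduces to the bifunctor statement by dualizing the contravariant variable and pulling back along $\Gamma^p(\iota)$ as in the proof of lemma \ref{lm-calcul-prelim}.
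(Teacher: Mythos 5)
Your overall route is the same as the paper's: identify the source with $E^{(1)}\otimes E_1\otimes\Hom_{\mathcal{F}}(X,Y)$ (only $d=1$ contributes since $Y$ is injective of degree $2$, and $\Hom_{\mathcal{F}}(X,Y)\simeq\kk$), decompose the target over compositions $\mu$ of $p$, use lemma \ref{lm-calcul-prelim} together with the normalization $\psi_1=\Id$ to see that $\phi_Y^E$ carries each graded line of the source isomorphically onto the summand indexed by the concentrated composition $p\mathbf{e}_i$, and then kill the remaining summands. The one step you leave unproved is, as you yourself say, the crux: the vanishing of $\Ext^*_{\mathcal{F}}(\Gamma^{\mu}X,Y^{(1)})$ when $\mu$ has at least two nonzero entries. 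Asserting that it ``should be obtained by a direct $\Ext$ computation using the Friedlander--Suslin resolution of $I^{(1)}$'' is not a proof, and that route would be genuinely laborious (one must resolve $Y^{(1)}$ and control the maps from all the functors $\Gamma^{\mu}X$ into the resolution). Your proposed treatment of the orthogonal and symplectic cases --- reducing to the bifunctor statement by dualizing the contravariant variable and pulling back along $\Gamma^p(\iota)$ --- is also not an argument as stated: vanishing does not transfer along a restriction map unless you show that map is suitably surjective, which is exactly the kind of comparison (as in the proof of lemma \ref{lm-calcul-prelim}) that requires real work and is not needed here.

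The missing idea is much simpler and works uniformly in the three cases; it is lemma \ref{lm-annul1} of the paper. If $\mu$ is a composition of $p$ with at least two nonzero parts, then every $\mu_i<p$, so each canonical inclusion $\Gamma^{\mu_i}\hookrightarrow\otimes^{\mu_i}$ is split; hence $\Gamma^{\mu}X$ is a direct summand of $X^{\otimes p}$, which is itself a direct summand of $Y^{\otimes p}$. Since $Y^{\otimes p}$ is projective in $\mathcal{F}$, the $\Ext$ groups in question are concentrated in degree zero, and they vanish there because there is no nonzero morphism $Y^{\otimes p}\to Y^{(1)}$ (no nonzero maps from an untwisted tensor power to a twisted functor, cf.\ \cite[Cor 2.12]{FS}). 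With this vanishing supplied, the rest of your argument is correct and coincides with the paper's proof.
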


It is not hard to see that the vector space $\mathrm{Hom}_\mathcal{F}(X, Y)$ has dimension one, and we denote by $f$ a basis of it (we may take for $f$ the canonical inclusion $X\to Y$ in the functor case, and the identity map in the bifunctor case). Recall that we have fixed a homogeneous basis $(b_1,\dots,b_\ell)$ of $E$. We let $(b_1^{(1)},\dots,b_\ell^{(1)})$ be the corresponding homogeneous basis of $E^{(1)}$.  
By definition $\phi_Y^E$ is a map:
$$E^{(1)}\otimes E_1\otimes \mathrm{Hom}_\mathcal{F}(X, Y)\to \bigoplus_{\mu\in\Lambda(p,\ell,1)}\Ext^*_{\mathcal{F}}(\Gamma^{\mu}X,Y^{(1)})\;,$$
which sends each basis element $b_i^{(1)}\otimes e_j\otimes f$ of the source to the class
$$\phi_Y^E(b_i^{(1)}\otimes e_j\otimes f)= f_*(e_j)\in \Ext^*_{\mathcal{F}}(\Gamma^{(0,\dots,0,p,0,\dots,0)}X,Y^{(1)})$$
where $p$ occupies the $i$-th position in the $\ell$-tuple $(0,\dots,0,p,0,\dots,0)$. By lemma \ref{lm-calcul-prelim}, $\phi_Y^E$ induces an isomorphism onto the summand of the target indexed by the compositions $\mu$ of $p$ with exactly one nonzero coefficient. Thus, to finish the proof of proposition \ref{prop-step-1} it remains to prove that the other summands of the target are zero. This follows from the next lemma.
\begin{lemma}\label{lm-annul1}
Let $\mu$ be a composition of $p$ into $\ell$ parts. If $\mu$ has at least two nonzero coefficients then $\Ext^*_\mathcal{F}(\Gamma^\mu X,Y^{(1)})=0$.
\end{lemma}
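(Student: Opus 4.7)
The plan is to reduce the vanishing to a multidegree mismatch, exploiting the phenomenon, specific to characteristic $p$, that the Frobenius twist is additive on direct sums.

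First, since $\mu$ has at least two nonzero coefficients, I would fix an index $i$ with $\mu_i\neq 0$ and rewrite
\[
\Gamma^\mu X \;\simeq\; A\otimes B, \qquad A:=\Gamma^{\mu_i}X,\quad B:=\bigotimes_{j\ne i}\Gamma^{\mu_j}X.
\]
By the hypothesis on $\mu$, both $A$ and $B$ are nontrivial, and more precisely every (bi)degree coordinate of each of them lies in $\{1,\dots,p-1\}$: in the functor case $A$ is homogeneous of degree $2\mu_i$ and $B$ of degree $2(p-\mu_i)$, while in the bifunctor case $A$ has bidegree $(\mu_i,\mu_i)$ and $B$ has bidegree $(p-\mu_i,p-\mu_i)$.

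Next, I would apply the sum-diagonal adjunction to pass to a category $\mathcal{F}^{(2)}$ of (bi)functors with twice as many variables:
\[
\Ext^*_{\mathcal{F}}(A\otimes B,\,Y^{(1)}) \;\simeq\; \Ext^*_{\mathcal{F}^{(2)}}\bigl(A\boxtimes B,\; Y^{(1)}\!\circ \oplus_*\bigr),
\]
where $\oplus_*$ denotes summation on the covariant variables (and, in the bifunctor case, the analogous pairing on the contravariant variables). The crucial step is then to decompose $Y^{(1)}\!\circ \oplus_*$ into multihomogeneous summands. Here I would invoke the key characteristic $p$ identity $(V\oplus W)^{(1)}=V^{(1)}\oplus W^{(1)}$ as strict polynomial functors, which follows from $(v+w)^p=v^p+w^p$ and the fact that $I^{(1)}\subset S^p$ is generated by $p$-th powers. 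Applying this to $Y^{(1)}=I^{(1)}\otimes I^{(1)}$ in the functor case and to $Y^{(1)}=\mathrm{Hom}_\kk(I^{(1)},I^{(1)})$ in the bifunctor case, each multihomogeneous summand of $Y^{(1)}\!\circ \oplus_*$ turns out to have every multidegree entry in $\{0,p\}$.

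Finally, I would compare multidegrees. Since $\Ext$ groups between multihomogeneous objects of different multidegrees vanish, only matching summands can contribute. In the bifunctor case, $A\boxtimes B$ has multidegree $(\mu_i,\mu_i,p-\mu_i,p-\mu_i)$ with all coordinates in $\{1,\dots,p-1\}$, so no match is possible. In the functor case, $A\boxtimes B$ has bidegree $(2\mu_i,2(p-\mu_i))$, and the only candidate for matching among the bidegrees $\{(2p,0),(p,p),(0,2p)\}$ appearing in $Y^{(1)}\!\circ\oplus_*$ is the cross term $(p,p)$, which would force $2\mu_i=p$ and is ruled out by the odd characteristic hypothesis. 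In all cases $\Ext^*_\mathcal{F}(\Gamma^\mu X,Y^{(1)})=0$. The principal technical obstacle is formulating the sum-diagonal adjunction cleanly in the bifunctor setting (the required left adjoint is the "sum on each variable"), but this is by now a standard tool; the odd characteristic assumption enters at exactly one point, namely the potential $(p,p)$ match in the functor case.
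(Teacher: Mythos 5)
Your argument is correct, but it follows a genuinely different route from the paper's. The paper's proof is a splitting-plus-projectivity argument: since every part of $\mu$ satisfies $\mu_i<p$, each canonical inclusion $\Gamma^{\mu_i}\to\otimes^{\mu_i}$ admits a retract, so $\Gamma^\mu X$ is a direct summand of $X^{\otimes p}$, hence of $Y^{\otimes p}$; as $Y^{\otimes p}$ is projective the $\Ext$ is concentrated in degree $0$, and one concludes with the vanishing $\Hom_{\mathcal{F}}(Y^{\otimes p},Y^{(1)})=0$, quoted from \cite[Cor 2.12]{FS}. You instead split $\Gamma^\mu X\simeq A\otimes B$ with both factors of intermediate degree, pass to doubled variables via the (derived) sum-diagonal adjunction, use additivity of the Frobenius twist on direct sums to decompose $Y^{(1)}$ of a sum, and conclude by a multidegree mismatch --- which is precisely the technique the paper uses one step later, in Lemma \ref{lm-annul2} (whose proof, conversely, reduces to Lemma \ref{lm-annul1} by the retract trick you bypass). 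The paper's proof buys brevity and elementarity (no derived adjunction, only the splitting of $\Gamma^k\subset\otimes^k$ for $k<p$ and projectivity of $Y^{\otimes p}$); yours buys independence from the cited $\Hom$-vanishing of \cite{FS} and makes the role of each hypothesis transparent: ``at least two nonzero parts'' gives $0<\mu_i<p$, and odd characteristic enters only to exclude the cross term $(p,p)$ in the functor case, while the bifunctor case goes through for $p=2$ as well, consistently with Theorem \ref{thm-untwist}. Two small slips in your wording, harmless because your final case analysis is the correct one: in the functor case the degrees of $A$ and $B$ lie in $\{2,\dots,2p-2\}$, not $\{1,\dots,p-1\}$, and the summand of $Y^{(1)}\circ\oplus$ of bidegree $(2p,0)$ shows that not every multidegree entry lies in $\{0,p\}$ there.
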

\begin{proof}
If $k<p$ the canonical inclusion $\Gamma^k(V)\to V^{\otimes k}$ has a retract. Thus, under our hypotheses, $\Gamma^\mu X$ is a direct summand of $X^{\otimes p}$ which is itself a direct summand of $Y^{\otimes p}$. But $Y^{\otimes p}$ is projective so $\Ext^*_\mathcal{F}(Y^{\otimes p},Y^{(1)})$ is concentrated in degree zero. Finally, it is not hard to see (use e.g. \cite[Cor 2.12]{FS}) that there is no nonzero morphism $Y^{\otimes p}\to Y^{(1)}$. 
\end{proof}

\subsubsection{The maps $\phi_F$ are isomorphisms (Step 2)}\label{subsubsec-Step2} As a second step towards the proof of theorem \ref{thm-untwist-bis}, we consider the case of the injective functors $Y^{\otimes d}$, for $d\ge 1$.
\begin{proposition}\label{prop-step-2}
For all $d\ge 0$, the map $\phi_{Y^{\otimes d}}^E$ is an isomorphism.
\end{proposition}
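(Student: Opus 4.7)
The plan is to deduce Proposition~\ref{prop-step-2} from Step~1 (Proposition~\ref{prop-step-1}) by exploiting the compatibility of $\phi^E$ with cup products, which has already been established in section~\ref{subsec-strateg} via the choice of classes $c_\nu$ satisfying Lemma~\ref{lm-compat-classes}. For each $d \geq 1$ this compatibility yields a commutative square
\[
\xymatrix{
\HH^{*}_{E^{(1)}\otimes E_1,X}(Y)^{\otimes d}\ar[rr]^-{(\phi^E_Y)^{\otimes d}}\ar[d]_{\mathrm{cup}} && \HH^{*}_{E,X}(Y^{(1)})^{\otimes d}\ar[d]^{\mathrm{cup}}\\
\HH^{*}_{E^{(1)}\otimes E_1,X}(Y^{\otimes d})\ar[rr]^-{\phi^E_{Y^{\otimes d}}} && \HH^{*}_{E,X}(Y^{(1)\otimes d})
}
\]
in which the top arrow is an isomorphism by Step~1 and the vertical arrows are iterated cup products (external cup product postcomposed with pullback along the divided-power coproduct). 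It then suffices to show that both vertical arrows are isomorphisms.

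On the left, the argument reduces to a direct $\Hom$ computation, since $Y^{\otimes d}$ is injective: the group equals $\bigoplus_n \Hom(\Gamma^{n,E^{(1)}\otimes E_1}X, Y^{\otimes d})$, and after the exponential decomposition $\Gamma^{n,E^{(1)}\otimes E_1}X = \bigoplus_\mu \Gamma^\mu X$ the problem reduces to evaluating $\Hom(\Gamma^\mu X, Y^{\otimes d})$. Homogeneity (both $X$ and $Y$ have functor-degree two) forces $|\mu|=d$, and degree-counting vanishings in the spirit of Lemma~\ref{lm-annul1} should eliminate the summands not hit by cup products and identify the remaining ones with the iterated cup-product image.

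The main obstacle is the right-hand vertical arrow: $Y^{(1)\otimes d}$ is not injective, so the $\Ext$ groups are genuinely nontrivial. My plan is a Künneth-type argument: choose an $\HH^{*}_{E,X}(-)$-acyclic resolution $J^\bullet$ of $Y^{(1)}$ (which exists because injectives in $\mathcal{F}$, being summands of parametrized symmetric tensors $S^\mu_W$, are $\HH^{*}_{E,X}(-)$-acyclic), form the tensor product $J^{\bullet\otimes d}$, and verify that it remains $\HH^{*}_{E,X}(-)$-acyclic. Then it computes $\HH^{*}_{E,X}(Y^{(1)\otimes d})$ and the iterated cup product on the RHS identifies with the total complex map induced by the tensor structure, matching the LHS through $(\phi^E_Y)^{\otimes d}$. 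This acyclicity of tensor products of parametrized symmetric tensors is precisely the ``functor technology'' flagged at the start of section~\ref{subsec-pf}: it is a specific feature of $\mathcal{F}$ with no direct analogue in general categories of group representations.
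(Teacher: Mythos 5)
Your reduction has a genuine gap: the two vertical ``iterated cup product'' arrows in your square are not isomorphisms --- they are not even surjective --- as soon as $d\ge 2$, so the square cannot transport the Step~1 isomorphism to $\phi^E_{Y^{\otimes d}}$. Concretely, the sum--diagonal adjunction computation (the one carried out in the paper's Step~2) identifies $\HH^*_{E,X}(Y^{(1)\,\otimes d})$ with $\bigoplus_{I\in\Omega}E^{(1)\,\otimes d}\otimes E_1^{\otimes d}$, where $\Omega$ is the set of pairings of $\{1,\dots,2d\}$ into $d$ two-element blocks, of cardinality $(2d)!/(2^d d!)$; the source $\HH^*_{E,X}(Y^{(1)})^{\otimes d}$ of your right-hand vertical map has total dimension $(\ell p)^d$ with $\ell=\dim E$, while the target has dimension $|\Omega|\,(\ell p)^d$. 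The image of the plain iterated cup product is only (the summand corresponding to) the standard pairing; the summands indexed by the other $I\in\Omega$ are reached only after post-composing with the permutation endomorphisms $(\sigma_I)_*$ of $Y^{(1)\,\otimes d}$. The same counting (isomorphism \eqref{eqn-iso-Ext2}, with $\Hom$'s since $Y^{\otimes d}$ is injective) shows the left-hand vertical arrow fails for the same reason. This also explains why your K\"unneth-type plan cannot work even granting acyclicity of tensor products of acyclic objects: the functor $\HH^*_{E,X}(-)=\bigoplus_n\Ext^*_{\mathcal F}(\Gamma^{n,E}X,-)$ is not monoidal in the required sense, because $\Gamma^{n,E}X$ evaluated on a direct sum of variables produces many cross terms beyond the one seen by the coproduct $\Delta$; this is exactly why the paper passes to the multifunctor category $\mathcal{P}_\kk(2d)$ and the indexing set $\Omega$.

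The fix is to weaken what you ask of the vertical maps, which is the content of the paper's Lemma \ref{lm-strateg}: besides Step~1 (Proposition \ref{prop-step-1}), you need (i) surjectivity of the enlarged map $\HH^*_{E,X}(Y^{(1)})^{\otimes d}\otimes\mathrm{End}_{\mathcal F}(Y^{(1)\,\otimes d})\to\HH^*_{E,X}(Y^{(1)\,\otimes d})$, $(c_1\otimes\cdots\otimes c_d)\otimes f\mapsto f_*(c_1\cup\cdots\cup c_d)$, and (ii) equality of the finite total dimensions of the source and target of $\phi^E_{Y^{\otimes d}}$. Naturality of $\phi^E$ (in particular with respect to endomorphisms $f$ of $Y^{\otimes d}$) together with compatibility with cup products then gives surjectivity of $\phi^E_{Y^{\otimes d}}$, and (ii) upgrades surjectivity to bijectivity. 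Both (i) and (ii) are supplied by the explicit computation via the exponential isomorphism, the vanishing Lemma \ref{lm-annul2} (your ``degree counting in the spirit of Lemma \ref{lm-annul1}'' is the right instinct, but it identifies the target as the larger sum over $\Omega$, not as the image of cup products alone), and the explicit description of the classes hit by $(\sigma_I)_*\bigl((b_{k_1}\otimes a'_{t_1})\cup\cdots\cup(b_{k_d}\otimes a'_{t_d})\bigr)$. Without the endomorphism factor and the dimension count, the argument as you propose it stops at the standard-pairing summand and cannot conclude.
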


In order to prove proposition \ref{prop-step-2}, we will rely on the following lemma.
\begin{lemma}\label{lm-strateg}
Let $F$ be an object of $\mathcal{F}$.  The following conditions are sufficient to prove that $\phi_{F^{\otimes d}}^E$ is an isomorphism:
\begin{enumerate}
\item $\phi_F^E$ is an isomorphism, 
\item\label{it-cond-3} the source and the target of $\phi_{F^{\otimes d}}^E$ have the same finite total dimension,
\item\label{it-cond-2} the following graded map is surjective:
$$
\begin{array}{ccc}
\HH^*_{E,X}(F^{(1)})^{\otimes d}\otimes \mathrm{End}_\mathcal{F}(F^{(1)\,\otimes d})&\to & \HH^*_{E, X}(F^{(1)\,\otimes d})\\
(c_1\otimes\cdots\otimes c_d)\otimes f & \mapsto & f_*(c_1\cup\cdots\cup c_d)
\end{array}\;.
$$
\end{enumerate} 
\end{lemma}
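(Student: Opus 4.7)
The plan is to use condition (2) to reduce the isomorphism claim to surjectivity, and then to exploit conditions (1) and (3) together with the cup product compatibility and naturality of $\phi^E$ to produce enough generators in the image of $\phi_{F^{\otimes d}}^E$. By (2), source and target are finite-dimensional graded spaces of equal total dimension, so surjectivity will suffice.

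The first step is to record the commutative square coming from the cup product compatibility of $\phi^E$ (Lemma \ref{lm-compat-classes}):
$$\xymatrix{
\HH^*_{E^{(1)}\otimes E_1, X}(F)^{\otimes d} \ar[r]^-{\cup}\ar[d]_-{(\phi_F^E)^{\otimes d}} & \HH^*_{E^{(1)}\otimes E_1, X}(F^{\otimes d}) \ar[d]^-{\phi_{F^{\otimes d}}^E}\\
\HH^*_{E, X}(F^{(1)})^{\otimes d} \ar[r]^-{\cup} & \HH^*_{E, X}(F^{(1)\otimes d})
}$$
By condition (1), the left vertical map is an isomorphism, so the image of $\phi_{F^{\otimes d}}^E$ contains every cup product $c_1\cup\cdots\cup c_d$ with $c_i\in\HH^*_{E, X}(F^{(1)})$. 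Moreover, naturality of $\phi^E$ with respect to endomorphisms of $F^{\otimes d}$ yields $\phi_{F^{\otimes d}}^E(g_*s)=(g^{(1)})_*\phi_{F^{\otimes d}}^E(s)$ for all $g\in\mathrm{End}_\mathcal{F}(F^{\otimes d})$, so the image is automatically stable under the subalgebra of twisted endomorphisms $\{g^{(1)}:g\in\mathrm{End}_\mathcal{F}(F^{\otimes d})\}\subseteq \mathrm{End}_\mathcal{F}(F^{(1)\otimes d})$.

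The second step is to promote this to stability under the full algebra $\mathrm{End}_\mathcal{F}(F^{(1)\otimes d})$. Once that is established, condition (3) immediately gives that the image of $\phi_{F^{\otimes d}}^E$ coincides with $\HH^*_{E, X}(F^{(1)\otimes d})$; then (2) upgrades surjectivity to bijectivity.

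The main obstacle is this last step. The Frobenius twist map $\mathrm{End}_\mathcal{F}(F^{\otimes d})\to\mathrm{End}_\mathcal{F}(F^{(1)\otimes d})$ is typically not surjective, so naturality alone is insufficient to close the gap between the twisted endomorphisms and the full $\mathrm{End}_\mathcal{F}(F^{(1)\otimes d})$ appearing in (3). To overcome this, I would combine all three hypotheses: the summand decomposition of $\HH^*_{E^{(1)}\otimes E_1, X}(F^{\otimes d})$ and $\HH^*_{E, X}(F^{(1)\otimes d})$ over the sets $\Lambda(\cdot,\ell,p)$ and $\Lambda(\cdot,\ell,1)$ described in section \ref{subsec-sens-dur}, the explicit expression of $\phi_{F^{\otimes d}}^E$ as post-composition with the universal classes $c_\nu$, and the dimension equality in (2). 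Together, these should force the image of $\phi_{F^{\otimes d}}^E$ to absorb the action of arbitrary elements of $\mathrm{End}_\mathcal{F}(F^{(1)\otimes d})$ on the cup product subspace, completing the proof.
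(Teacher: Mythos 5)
Your first two steps reproduce the paper's argument correctly: by compatibility of $\phi^E$ with cup products and condition (1), the image of $\phi^E_{F^{\otimes d}}$ contains all classes $c_1\cup\dots\cup c_d$ with $c_i\in \HH^*_{E,X}(F^{(1)})$, and by naturality it is stable under $(g^{(1)})_*$ for $g\in\mathrm{End}_{\mathcal F}(F^{\otimes d})$. But the proposal then stops at exactly the decisive point, and the premise that stops you is wrong. The twist map $\mathrm{End}_{\mathcal F}(F^{\otimes d})\to \mathrm{End}_{\mathcal F}(F^{(1)\otimes d})$, $g\mapsto g^{(1)}$, is not merely ``typically not surjective'' to be worked around: it is bijective, because precomposition by the Frobenius twist is fully faithful on $\PP_\kk$ (and likewise on $\PP_\kk(1,1)$). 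Concretely, $\Hom_{\PP_\kk}(A^{(1)},B^{(1)})\simeq \Hom_{GL_n}(A(\kk^n)^{(1)},B(\kk^n)^{(1)})$ for $n$ large by Friedlander--Suslin's comparison, and a $GL_n$-equivariant map between modules on which the Frobenius kernel $G_1$ acts trivially is the same as a $GL_n/G_1$-equivariant map; via $GL_n/G_1\simeq GL_n$ this identifies the group with $\Hom_{GL_n}(A(\kk^n),B(\kk^n))\simeq\Hom_{\PP_\kk}(A,B)$. This is why condition (3) is stated with the full algebra $\mathrm{End}_{\mathcal F}(F^{(1)\otimes d})$: after this identification, your ``twisted endomorphisms'' are all of it.

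With that input your argument closes at once, and it is the paper's proof: there is a single commutative square whose top row is $\HH^*_{E^{(1)}\otimes E_1,X}(F)^{\otimes d}\otimes\mathrm{End}_{\mathcal F}(F^{\otimes d})\to \HH^*_{E^{(1)}\otimes E_1,X}(F^{\otimes d})$, $(c_1\otimes\dots\otimes c_d)\otimes g\mapsto g_*(c_1\cup\dots\cup c_d)$, whose bottom row is the map of condition (3), and whose left vertical arrow is $(\phi^E_F)^{\otimes d}$ tensored with the identification $g\mapsto g^{(1)}$ of endomorphism algebras; commutativity is exactly naturality plus cup-product compatibility of $\phi^E$. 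The left vertical arrow is an isomorphism by (1), the bottom arrow is surjective by (3), hence $\phi^E_{F^{\otimes d}}$ is surjective, and (2) upgrades a surjection between graded spaces of the same finite total dimension to an isomorphism. In particular, your concluding paragraph, which proposes to ``force'' the missing stability by invoking the $\Lambda(\cdot,\ell,p)$-decompositions and the explicit classes $c_\nu$, is not needed and, as written, is not an argument; the only missing ingredient was the full faithfulness of the Frobenius twist.
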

\begin{proof}
As $\phi^E$ is a natural transformation compatible with cup products, we have a commutative square (where the lower horizontal arrow is the map appearing in condition \eqref{it-cond-2}, and the upper horizontal arrow is defined similarly):
$$\xymatrix{
\HH^*_{E^{(1)}\otimes E_1, X}(F)^{\otimes d}\otimes \mathrm{End}_\mathcal{F}(F^{\otimes d})\ar[rr]\ar[d]^-{(\phi_F^E)^{\otimes d}\otimes\mathrm{Id}}_-{\simeq}&&\HH^*_{E^{(1)}\otimes E_1, X}(F^{\otimes d})\ar[d]^-{\phi_{F^{\otimes d}}^E}\\
\HH^*_{E, X}(F^{(1)})^{\otimes d}\otimes \mathrm{End}_\mathcal{F}(F^{(1)\,\otimes d})\ar@{->>}[rr]&&\HH^*_{E, X}(F^{(1)\,\otimes d})\;.
}$$
In particular $\phi_{F^{\otimes d}}^E$ is surjective. Condition \eqref{it-cond-3}  ensures it is an isomorphism.
\end{proof}

We have seen in section \ref{subsubsec-Step1} that the first condition of lemma \ref{lm-strateg} is satisfied for $F=Y$. We are now going to check conditions  \eqref{it-cond-3} and \eqref{it-cond-2}. We only prove the case $\mathcal{F}=\mathcal{P}_\kk$, the bifunctor case being similar\footnote{Moreover conditions \eqref{it-cond-3} and \eqref{it-cond-2} for bifunctors can also be deduced from computations already published in the literature. Indeed, for $E=\kk$ (concentrated in degree zero) the statement follows from \cite[Thm 1.8]{FF}, \cite[Thm Prop 5.4]{TouzeENS} or the computations of \cite[p. 781]{Chalupnik1}. For an arbitrary $E$ the computations can be deduced from the case $E=\kk$ by using the isomorphism
$\HH^*_{E,\mathrm{gl}}(B)\simeq \HH^*_{\mathrm{gl}}(B_E)$ explained at the end of section \ref{subsubsec-Step3}.}. 
For this computation, we momentarily (until the end of section \ref{subsubsec-Step2}) change our notations and indicate explicitly by the letter $V$ the variable of the strict polynomial functors, e.g. we write $\Gamma^d\mathrm{Hom}_\kk(E, X(V))$ instead of $\Gamma^d\mathrm{Hom}_\kk(E, X)$. We also use strict polynomial multifunctors of $2d$ (covariant) variables which we denote explicitly by $V_1,\dots,V_{2d}$. The category of multifunctors with $2d$ covariant variables is denoted by $\mathcal{P}_\kk(2d)$. The (derived) sum-diagonal adjunction (see e.g. the proof of \cite[Thm 1.7]{FFSS} or \cite[Section 5.3]{TouzeClass}) yields an isomorphism:
\begin{align*}\mathrm{Ext}^*_{\mathcal{P}_\kk(2d)}\Big(\Gamma^{pd}\mathrm{Hom}_\kk(E, X(&V_{1}\oplus\cdots\oplus V_{2d}))\;,\; V_1^{(1)}\otimes\cdots\otimes V_{2d}^{(1)}\Big)\\&\xrightarrow[\alpha]{\;\simeq\;} \mathrm{Ext}^*_{\mathcal{P}_\kk}\left(\Gamma^{pd}\mathrm{Hom}_\kk( E, X(V))\;,\; V^{(1)\,\otimes 2d}\right)\;.\end{align*}
To be more specific, $\alpha$ sends an extension $e_{V_1,\dots,V_{2d}}$ to the extension obtained by first replacing all the variables $V_i$ by $V$, and then pulling back by the map $\Gamma^{pd}(\delta)$, where $\delta:V\to V^{\oplus d}$ is the diagonal map which sends $v$ to $(v,\dots,v)$. 

Next we analyze the first argument of the source of $\alpha$. 
We consider the partitions of the set $\{1,\dots,2d\}$ into $d$ subsets of $2$ elements. Each partition of this kind can be uniquely represented as a $d$-tuple of pairs $((i_1,j_1),\dots,(i_d,j_d))$ of elements of $\{1,\dots,2d\}$ satisfying $i_n<j_n$ for all $n$ and $i_1<\dots<i_d$. We denote by $\Omega$ the set of such $d$-tuples. If $I\in \Omega$ we let $\Gamma^{p,I}(V_1,\dots, V_{2d})$ denote the multifunctor:
$$\Gamma^{p,I}(V_1,\dots,V_{2d})= \bigotimes_{1\le n\le d}\Gamma^p\left(V_{i_n}\otimes V_{j_n}\right)\;.$$
Using the exponential isomorphism for $X$ and for divided powers, we obtain that the graded multifunctor $\Gamma^{pd}\mathrm{Hom}_\kk(E, X(V_1\oplus\dots\oplus V_{2d}))$ is graded isomorphic to
$$\bigoplus_{I\in \Omega}\mathrm{Hom}_\kk(E^{(1)},\kk)^{\otimes d}\otimes\Gamma^{p,I}(V_1,\dots,V_{2d})\quad\oplus\;\text{other terms.}
$$
The `other terms' mentioned in the decomposition don't bring any contribution to the $\mathrm{Ext}$ by lemma \ref{lm-annul2} below, so that $\alpha$ actually induces a graded isomorphism:
\begin{align*}
\bigoplus_{
I\in \Omega
}
E^{(1)\,\otimes d}\otimes\mathrm{Ext}^*_{\mathcal{P}_\kk(2d)}\big(\Gamma^{p,I}(V_1,&\dots,V_{2d})\;,\; V_1^{(1)}\otimes\cdots\otimes V_{2d}^{(1)}\Big)\\&\xrightarrow[\alpha]{\;\simeq\;} \mathrm{Ext}^*_{\mathcal{P}_\kk}\left(\Gamma^{pd}\mathrm{Hom}_\kk( E, X(V))\;,\; V^{(1)\,\otimes 2d}\right)\;.\end{align*}
\begin{lemma}\label{lm-annul2}
For $1\le k\le N$ we consider a multifunctor $X_k(V_1,\dots,V_{2d})=V_{i_k}\otimes V_{j_k}$ with $1\le i_k,j_k\le d$, and a nonnegative integer $d_k$. We let 
$$F(V_1,\dots,V_{2d}):= \bigotimes_{1\le k\le N} \Gamma^{d_k}X_k(V_1,\dots,V_{2d})\;.$$
If
$\mathrm{Ext}^*_{\mathcal{P}_\kk(2d)}(F(V_1,\dots,V_{2d}), V_1^{(1)}\otimes \cdots\otimes V_{2d}^{(1)})$ is nonzero, then there exists a $d$-tuple $I\in\Omega$ such that the multifunctor $F(V_1,\dots,V_{2d})$ is isomorphic to $\Gamma^{p,I}(V_1,\dots,V_{2d})$.
\end{lemma}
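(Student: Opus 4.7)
The plan is to combine a multidegree analysis, a Künneth decomposition along a graph structure, and a $\mathrm{Hom}$ computation using the representability of divided powers. First, since $\mathrm{Ext}^*$ in $\mathcal{P}_\kk(2d)$ preserves multidegree and the target $V_1^{(1)} \otimes \cdots \otimes V_{2d}^{(1)}$ is homogeneous of multidegree $(p,p,\ldots,p)$, the source $F$ must have multidegree $p$ in each variable. Writing $n_{k,m}$ for the multiplicity of $m$ in the unordered pair $\{i_k, j_k\}$ (so $n_{k,m}\in\{0,1,2\}$), this forces $\sum_k d_k\, n_{k,m} = p$ for every $m \in \{1, \ldots, 2d\}$. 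I would encode these data as a weighted multigraph $\Gamma$ on vertex set $\{1, \ldots, 2d\}$, with one edge (or self-loop) per index $k$ joining $i_k$ to $j_k$ and carrying weight $d_k$; the constraint becomes ``total incident weight at each vertex equals $p$''.

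Both $F$ and the target split as external tensor products along the connected components of $\Gamma$, since different components involve disjoint variables. By Künneth in $\mathcal{P}_\kk(2d)$, $\mathrm{Ext}^*$ factors over these components, so it suffices to show the Ext vanishes on each connected component $C$ unless $C$ consists of a single edge of weight $p$ joining two distinct vertices; the collection of such matching edges across all components then yields a perfect matching $I \in \Omega$ with $F \cong \Gamma^{p, I}$, as claimed. For the single-component analysis, I would use that each $\Gamma^{d_k}(V_{i_k} \otimes V_{j_k})$ is a representable projective in the multifunctor category (namely a parametrized divided power of the form $\Gamma^{d_k,\kk,\kk}$, representing evaluation at the one-dimensional space in its two relevant variables), so $F_C$ is projective and $\mathrm{Ext}^*$ collapses to $\mathrm{Hom}$. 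A single edge of weight $d$ then gives $\mathrm{Hom}(\Gamma^d(V \otimes W), V^{(1)} \otimes W^{(1)})$, which vanishes for $d \neq p$ by bidegree and is one-dimensional for $d = p$ by Yoneda. Self-loops at a vertex $m$ contribute $\Gamma^{d_k}(V_m^{\otimes 2})$ with bidegree $2d_k$ in $V_m$; since $V_m^{(1)}$ has degree $p$ and is not a composition factor in the head of the relevant tensor powers, a head/bidegree analysis rules out such components.

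The main obstacle is components with multiple edges between the same pair of vertices, with weights $d_1, \ldots, d_s$ summing to $p$, $s \geq 2$, and all $d_i < p$. The key ingredient, closely related to \cite[Cor 2.12]{FS} as already invoked in the proof of Lemma~\ref{lm-annul1}, is that the natural multiplication $\Gamma^{d_1}(X) \otimes \cdots \otimes \Gamma^{d_s}(X) \to \Gamma^p(X)$ in the divided-power algebra vanishes modulo $p$, because the multinomial coefficient $\binom{p}{d_1, \ldots, d_s}$ is divisible by $p$ whenever all $d_i < p$. Consequently the obvious composition with the Frobenius quotient $\Gamma^p(X) \to X^{(1)}$ is zero, and a Yoneda-based inspection using the representability of the projective $\bigotimes_i \Gamma^{d_i}(X)$ shows this exhausts all natural maps to $X^{(1)} = V^{(1)} \otimes W^{(1)}$. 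Combining all these subcases forces $F \cong \Gamma^{p, I}$ for a unique perfect matching $I \in \Omega$, completing the proof.
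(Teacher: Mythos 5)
Your opening multidegree reduction agrees with the paper, but the technical core of your argument rests on a false claim: $\Gamma^{d}(V\otimes W)$ is \emph{not} a representable projective of the bivariate category $\mathcal{P}_\kk(2)$. The representable projective attached to evaluation at $(\kk,\kk)$ is $\Gamma^{d}(V)\otimes\Gamma^{e}(W)$, not $\Gamma^{d}(V\otimes W)$, and for $d=p$ the latter is not projective at all: by Lemma \ref{lm-calcul-prelim}, $\Ext^*_{\mathcal{P}_\kk(2)}(\Gamma^p(V_1\otimes V_2),V_1^{(1)}\otimes V_2^{(1)})\simeq E_1$ lives in degrees $0,2,\dots,2p-2$, so your assertions that ``$\Ext^*$ collapses to $\Hom$'' and that the $d=p$ case is one-dimensional contradict the very computation this lemma feeds into. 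Consequently your treatment of the problematic components is ungrounded. What is true, and what actually does the work, is that when \emph{all} the weights $d_i$ occurring in a factor are $<p$, each $\Gamma^{d_i}$ splits off its tensor power, so the source is a direct summand of the genuine projective $V^{\otimes p}\otimes W^{\otimes p}$; then higher $\Ext$ vanishes for that reason, and the degree-zero part vanishes because $\Hom_{\mathcal{P}_\kk}(\otimes^{p},I^{(1)})=0$ (a weight argument, or \cite[Cor 2.12]{FS}, exactly as in Lemma \ref{lm-annul1}). Your multinomial-coefficient observation about the multiplication $\Gamma^{d_1}\otimes\cdots\otimes\Gamma^{d_s}\to\Gamma^{p}$ is beside the point: knowing that one particular composite is zero does not bound all natural transformations, and the ``Yoneda-based inspection'' you invoke to close that gap relies on the false representability.

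There is also a combinatorial gap: your enumeration of bad components (self-loops, and multiple edges between one pair of vertices) is incomplete. The vertex constraint ``incident weight $=p$'' also allows, for instance, even cycles with alternating weights $a$ and $p-a$ on four or more vertices, or components mixing a self-loop of weight $c$ at each end with an edge of weight $p-2c$, and these are not covered by your case analysis. The paper's proof avoids any enumeration: if some $d_n<p$, then after renaming, every $d_k$ with $X_k$ involving the variable $V_1$ satisfies $d_k<p$, so $F$ is a retract of $V_1^{\otimes p}\otimes G(V_2,\dots,V_{2d})$; K\"unneth then isolates the factor $\Ext^*_{\mathcal{P}_\kk}(V_1^{\otimes p},V_1^{(1)})=0$, forcing all $d_k=p$, and the multidegree $(p,\dots,p)$ then forces the pairs $\{i_k,j_k\}$ to form a perfect matching, i.e. $F\simeq\Gamma^{p,I}$. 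Rewriting your component argument along these lines (summand of a tensor-power projective when some weight is $<p$, plus vanishing of $\Hom$ into the twist) would repair both defects at once.
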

\begin{proof}
Since there is no nonzero $\mathrm{Ext}$ between homogeneous multifunctors of different multidegrees, $F(V_1,\dots,V_{2d})$ must have multidegree $(p,\dots,p)$ in order that the $\mathrm{Ext}$ is nonzero. In particular, this implies that $d_k\le p$ for all $k$. We claim that the $\mathrm{Ext}$ is zero unless all the $d_k$ are equal to $p$. To this purpose, we follow the same strategy as in the proof of lemma \ref{lm-annul1}. Assume that $d_n<p$ for some $n$. Up to renumbering the variables we may assume that $X_n$ is non constant with respect to the variable $V_1$. Let $K\subset \{1,\dots,N\}$ the set of the indices $k$ such that $X_k$ is non constant with respect to $V_1$. As $F(V_1,\dots,V_{2d})$ has degree $p$ with respect to the variable $V_1$ we have $d_k<p$ for all $k\in K$ and in particular the canonical inclusion
$$\bigotimes_{k\in K}\Gamma^{d_k}X_{k}(V_1,\dots,V_{2d})\hookrightarrow
\bigotimes_{k\in K}X_{k}(V_1,\dots,V_{2d})^{\otimes d_k}$$
has a retract. Thus we can write $F(V_1,\dots,V_{2d})$ as a retract of some multifunctor of the form $V_{1}^{\otimes p}\otimes G(V_2,\dots,V_{2d})$. In particular by the K\"unneth formula this implies that $\mathrm{Ext}^*_{\mathcal{P}_\kk(2d)}(F(V_1,\dots,V_{2d}), V_1^{(1)}\otimes \cdots\otimes V_{2d}^{(1)})$
is isomorphic to the tensor product
$$\mathrm{Ext}^*_{\mathcal{P}_\kk}(V_1^{\otimes p}, V_1^{(1)})\otimes \mathrm{Ext}^*_{\mathcal{P}_\kk(2d-1)}(G(V_2,\dots,V_{2d}), V_2^{(1)}\otimes \cdots\otimes V_{2d}^{(1)})\;.$$
But the factor on the left is zero by lemma \ref{lm-annul1}. To sum up, we have proved that $F(V_1,\dots,V_{2d})$ must have multidegree $(p,\dots,p)$ and that all the $d_k$ are equal to $p$. This implies that $F(V_1,\dots,V_{2d})$ must be equal to some $\Gamma^{p,I}(V_1,\dots,V_{2d})$.
\end{proof}

Finally each term in the direct sum appearing at the source of $\alpha$ can be explicitly computed. Let $I=((i_1,j_1),\dots, (i_d,j_d))$ be an element of $\Omega$. Let $\sigma_I$ be the permutation of $\{1,\dots,2d\}$ defined by $\sigma_I(j_n)=2n$ and $\sigma_I(i_n)=2n-1$. There is a corresponding isomorphism of multifunctors (still denoted by $\sigma_I$):
$$\sigma_I: \bigotimes_{1\le n\le d} V_{i_n}^{(1)}\otimes V_{j_n}^{(1)}\xrightarrow[]{\simeq} \bigotimes_{1\le n\le d} V_{2n}^{(1)}\otimes V_{2n+1}^{(1)}\;. $$
If $T^{(1),I}(V_1,\dots,V_{2d})$ denotes the source of $\sigma_I$, we thus have a composite isomorphism (where the first isomorphism is provided by lemma \ref{lm-calcul-prelim} and where $\kappa$ refers to the K\"unneth map):
\begin{align*}
E_1^{\otimes d}\simeq \mathrm{\Ext}^*_{\mathcal{P}_\kk(2)}(&\Gamma^p(V_1\otimes V_2), V_1^{(1)}\otimes V_2^{(1)})^{\otimes d}\\&\xrightarrow[\simeq]{\kappa}
\mathrm{\Ext}^*_{\mathcal{P}_\kk(2d)}\left(\Gamma^{p,I}(V_1,\dots,V_{2d}), T^{(1),I}(V_1,\dots,V_{2d})\right)\\
&\xrightarrow[\simeq]{(\sigma_I)_*}\mathrm{\Ext}^*_{\mathcal{P}_\kk(2d)}\left(\Gamma^{p,I}(V_1,\dots,V_{2d}), V_1^{(1)}\otimes\cdots\otimes V_{2d}^{(1)}\right)\;.
\end{align*}
To sum up, we have constructed a completely explicit isomorphism:
\begin{align}
\bigoplus_{I\in \Omega} E^{(1)\,\otimes d}\otimes E_1^{\otimes d}\xrightarrow[]{\simeq}\mathrm{Ext}^*_{\mathcal{P}_\kk}\left(\Gamma^{pd}\mathrm{Hom}_\kk(E, X(V))\;,\; V^{(1)\,\otimes 2d}\right)\;. \label{eqn-iso-Ext}
\end{align}

By following the same reasoning, one gets a similar isomorphism (which could also be computed by more down-to-earth methods. Indeed $V^{\otimes 2d}$ is injective, hence this isomorphism is actually a mere $\mathrm{Hom}$ computation):
\begin{align}
\bigoplus_{I\in \Omega} E^{(1)\,\otimes d}\otimes E_1^{\otimes d}\xrightarrow[]{\simeq}\mathrm{Ext}^*_{\mathcal{P}_\kk}\left(\Gamma^{d}\mathrm{Hom}_\kk( E^{(1)}\otimes E_1, X(V))\;,\; V^{\otimes 2d}\right)\;. \label{eqn-iso-Ext2}
\end{align}
By comparing \eqref{eqn-iso-Ext} and \eqref{eqn-iso-Ext2}, we see that condition  \eqref{it-cond-3} of lemma \ref{lm-strateg} is satisfied.

To check condition \eqref{it-cond-2} of lemma \ref{lm-strateg}, we write down explicitly the effect of isomorphism \eqref{eqn-iso-Ext} on an element $x=b_{k_1}\otimes\cdots\otimes b_{k_n}\otimes a_{t_1}\otimes\cdots\otimes a_{t_n}$ belonging to the term $E^{(1)\,\otimes d}\otimes E_1^{\otimes d}$ indexed by $I$. To be more explicit, let $a'_{t_n}$ be the image of $a_{t_n}$ by the isomorphism $E_1\xrightarrow[]{\simeq} \mathrm{Ext}^*_{\mathcal{P}}(\Gamma^{p}X(V),V^{(1)\,\otimes 2})$ provided by lemma \ref{lm-calcul-prelim}.
If we define an $\ell$-tuple $\mu=(0,\dots,0,p,0,\dots,0)$ with `$p$' in $n$-th position, we can interpret $b_{k_n}\otimes a_{t_n}'$ as an element of 
$$\mathrm{Ext}^*_{\mathcal{P}}(\Gamma^{\mu}X(V),V^{(1)\,\otimes 2})\,[\deg(b_{k_n})]\; \subset\; \HH^*_{E, X(V)}(V^{(1)\,\otimes 2})$$
and by following carefully the explicit definition of isomorphism \eqref{eqn-iso-Ext}, we compute that isomorphism \eqref{eqn-iso-Ext} sends $x$ to
$$(\sigma_I)_*\left( (b_{k_1}\otimes a'_{t_1})\cup\cdots\cup (b_{k_d}\otimes a'_{t_d})\right) \in \HH^*_{E, X(V)}(V^{(1)\,\otimes 2d})\;.$$
As isomorphism \eqref{eqn-iso-Ext} is surjective, we obtain that condition \eqref{it-cond-2} of lemma \ref{lm-strateg} is satisfied.

\subsubsection{The maps $\phi_F$ are isomorphisms (Step 3)}\label{subsubsec-Step3}
To conclude the proof of theorem \ref{thm-untwist-bis}, we now prove that the maps $\phi_F$ are isomorphisms when $F$ is an arbitrary injective object of $\mathcal{F}$, homogeneous of degree $2d$. Since the source and the target of $\phi_F$ commute with arbitrary sums, we may restrict ourselves to proving this for an indecomposable injective $J$. If $\mathcal{F}=\PP_\kk$, any indecomposable injective $J$ homogeneous of degree $2d$ is a direct summand of a symmetric tensor $S^\mu$ for some composition $\mu$  of $2d$. If $\mathcal{F}=\PP_\kk(1,1)$, any indecomposable injective $J$ homogeneous of total degree $2d$ is a direct summand of a symmetric tensor $S^{\lambda,\mu}:(V,W)\mapsto S^\lambda(V^\sharp)\otimes S^\mu(W)$ for some tuples of nonnegative integers $\lambda=(\lambda_1,\dots,\lambda_s)$ and $\mu=(\mu_1,\dots,\mu_t)$ satisfying $\sum\lambda_i+\sum\mu_i=2d$ (and $V^\sharp$ is the $\kk$-linear dual of $V$). The next lemma records a consequence of this fact.
\begin{lemma}
The map $\phi_J$ is an isomorphism for all injectives $J$ if and only if it is an isomorphism for all symmetric tensors $J$.
\end{lemma}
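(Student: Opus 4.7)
The plan is to combine naturality of $\phi$ in $F$ with two facts just recalled: both $\HH^*_{E^{(1)}\otimes E_1, X}(-)$ and $\HH^*_{E, X}((-)^{(1)})$ commute with arbitrary direct sums, and every indecomposable injective in $\mathcal{F}$ homogeneous of degree $2d$ is a direct summand of a symmetric tensor $S^\mu$ (resp.\ $S^{\lambda,\mu}$ in the bifunctor case).

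The ``only if'' direction is immediate, since every $S^\mu$ (resp.\ $S^{\lambda,\mu}$) is itself injective. For the ``if'' direction, I would first reduce to the case of an indecomposable injective: an arbitrary injective $J\in\mathcal{F}$ homogeneous of degree $2d$ splits as a direct sum $\bigoplus_i J_i$ of indecomposables; since $\phi$ is a natural transformation between two functors commuting with arbitrary direct sums, $\phi_J$ is identified with $\bigoplus_i \phi_{J_i}$, so it is an isomorphism as soon as each $\phi_{J_i}$ is.

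Next, given an indecomposable injective $J$, the classification recalled above yields a symmetric tensor $S$ together with maps $\iota:J\to S$ and $\pi:S\to J$ such that $\pi\iota=\mathrm{Id}_J$. Naturality of $\phi$ applied to $\iota$ and $\pi$ gives a commutative diagram
\[
\xymatrix{
\HH^*_{E^{(1)}\otimes E_1, X}(J) \ar[r]^-{\phi_{J}} \ar[d]_-{\iota_*} & \HH^*_{E, X}(J^{(1)}) \ar[d]^-{(\iota^{(1)})_*} \\
\HH^*_{E^{(1)}\otimes E_1, X}(S) \ar[r]^-{\phi_{S}} \ar[d]_-{\pi_*} & \HH^*_{E, X}(S^{(1)}) \ar[d]^-{(\pi^{(1)})_*}\\
\HH^*_{E^{(1)}\otimes E_1, X}(J) \ar[r]^-{\phi_{J}} & \HH^*_{E, X}(J^{(1)})
}
\]
whose vertical compositions are the identity, because $\pi^{(1)}\iota^{(1)}=(\pi\iota)^{(1)}=\mathrm{Id}$. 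Thus $\phi_J$ is a retract of $\phi_S$, and any retract of an isomorphism is an isomorphism.

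Since the argument is purely formal (just naturality and additivity), there is no real obstacle in this lemma itself: the genuine difficulty has been displaced to the next step, namely proving that $\phi_{S^\mu}$ (resp.\ $\phi_{S^{\lambda,\mu}}$) is an isomorphism for every tuple. I would expect that step to be the actual hard part, presumably via the tensor-product decomposition of a symmetric tensor together with the compatibility of $\phi$ with cup products (Lemma \ref{lm-compat-classes}), reducing to the Step~2 computation for $Y^{\otimes d}$ in Proposition \ref{prop-step-2}.
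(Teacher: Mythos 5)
Your proof is correct and follows the same route the paper takes (the paper leaves the details implicit): reduce to indecomposable injectives using that both sides commute with arbitrary direct sums, then realize each indecomposable injective as a retract of a symmetric tensor and use naturality of $\phi$ to exhibit $\phi_J$ as a retract of $\phi_S$.
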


Moreover, if $\mathcal{F}=\PP_\kk(1,1)$, we may assume that $\lambda$ and $\mu$ are both compositions of $d$, otherwise we have $\HH^*_X(S^{\lambda,\mu\;(r)})=0=\HH^*_{E_r, X}(S^{\lambda,\mu})$ for degree reasons (there are no nonzero $\mathrm{Ext}$ between homogeneous bifunctors of different bidegrees), so that $\phi_{S^{\lambda,\mu}}$ is trivially an isomorphism. 

Now symmetric tensors are quotients of $Y^{\otimes d}$. Indeed if $\mathcal{F}=\PP_\kk$, then $Y^{\otimes d}(V)=V^{\otimes 2d}$, the symmetric group $\Si_{2d}$ acts on $Y^{\otimes d}$ by permuting the variables, and for all compositions $\mu$ of $2d$ we have an isomorphism $(Y^{\otimes d})_{\Si_\mu}\simeq S^\mu$. Similarly in the bifunctor case $Y^{\otimes d}(V,W)= (V^\sharp)^{\otimes d}\otimes W^{\otimes d}$, the group $\Si_d\times \Si_d$ acts on $Y^{\otimes d}$ by permuting the variables, and we have an isomorphism:
$(Y^{\otimes d})_{\Si_\lambda\times\Si_\mu}\simeq S^{\lambda,\mu}$. Let $(J,\Si)$ denote $(S^{\lambda,\mu},\Si_\lambda\times\Si_\mu)$ (in the bifunctor setting) or $(S^\mu,\Si_\mu)$ (in the functor setting). By naturality of $\phi_F$, we have a commutative diagram 
$$\xymatrix{
\HH^*_{E, X}(Y^{\otimes d\,(1)})_{\Si}\ar[r]^-{}&\HH^*_{E, X}(J^{(1)})
\\
\HH^*_{E^{(1)}\otimes E_1, X}(Y^{\otimes d})_{\Si}\ar[u]^-{
\text{\normalsize $\phi_{Y^{\otimes d}}$}
}_-\simeq\ar[r]^-{}&\HH^*_{E^{(1)}\otimes E_1, X}(J)\ar[u]^-{
\text{\normalsize $\phi_{J}$}}
}\;.$$
The vertical map on the left is an isomorphism as we established it in section \ref{subsubsec-Step2}. 
The horizontal maps of the diagrams are isomorphisms by proposition \ref{prop-step-3} below, hence $\phi_J$ is an isomorphism for all symmetric tensors $J$, which finishes the proof of theorem \ref{thm-untwist}.

\begin{proposition}\label{prop-step-3}
Assume that $(\mathcal{F},X)=(\PP_\kk(1,1),\mathrm{gl})$. If $\lambda$ and $\mu$ are compositions of $d$, then for all $r\ge 0$ and all finite dimensional graded vector spaces $E$, the quotient map $Y^{\otimes d}\to S^{\lambda,\mu}$ induces an isomorphism
$$\HH^*_{E, X}(Y^{\otimes d\,(r)})_{\Si_\lambda\times \Si_\mu}\xrightarrow[]{\simeq}\HH^*_{E, X}(S^{\lambda,\mu\;(r)})$$
If $(\mathcal{F},X)=(\PP_\kk,S^2)$ or $(\PP_\kk,\Lambda^2)$, $p$ is odd and $\mu$ is a composition of $2d$ the quotient map 
$Y^{\otimes d}\to S^{\mu}$ induces an isomorphism
$$\HH^*_{E, X}(Y^{\otimes d\,(r)})_{\Si_\mu}\xrightarrow[]{\simeq}\HH^*_{E, X}(S^{\mu\;(r)})\;. $$
\end{proposition}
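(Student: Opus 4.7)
The plan is to reduce the statement to an explicit $\Ext$-computation, following the template developed in Section \ref{subsubsec-Step2} for proposition \ref{prop-step-2}.

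First, I would invoke the parametrization adjunction $\HH^*_{E,X}(B)\simeq \HH^*_X(B_E)$ mentioned at the end of this very subsection (and whose bifunctor version is promised in the footnote of Section \ref{subsubsec-Step2}). The key point is that the parametrization operation $(-)_E$ is exact and acts on variables that are distinct from the tensor factors being permuted, so the $\Si_\lambda\times\Si_\mu$-action commutes with parametrization. It therefore suffices to establish the analogous isomorphism for $\HH^*_X$ applied to the parametrized functors $Y^{\otimes d\,(r)}_E$ and $S^{\lambda,\mu\,(r)}_E$, which effectively reduces the problem to the case $E=\kk$ (with a bookkeeping grading).

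Next, I would apply the sum-diagonal adjunction exactly as in Section \ref{subsubsec-Step2}. In the bifunctor case, write $Y^{\otimes d}(V,W)=\mathrm{gl}(V,W)^{\otimes d}$ as the diagonal of the multifunctor $Y^{\boxtimes d}(V_1,\dots,V_d,W_1,\dots,W_d)=\bigotimes_n \mathrm{gl}(V_n,W_n)$ in $\PP_\kk(2d)$. The adjunction converts the $\Ext$-group into a multifunctor $\Ext$ in which the source $\Gamma^{dp^r}\mathrm{gl}(\bigoplus V_n,\bigoplus W_n)$ decomposes via the exponential isomorphism for divided powers into a sum indexed by matrices. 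An annihilation lemma analogous to Lemma \ref{lm-annul2} discards every summand except those indexed by \emph{matchings} (pairing each source factor with a unique target factor), yielding an explicit decomposition of $\HH^*_X(Y^{\otimes d\,(r)}_E)$ on which $\Si_d\times\Si_d$ acts by permuting the matching indices. Applying the same machinery to $S^{\lambda,\mu}=(Y^{\otimes d})_{\Si_\lambda\times\Si_\mu}$ yields a parallel decomposition of $\HH^*_X(S^{\lambda,\mu\,(r)}_E)$ indexed by the corresponding $\Si_\lambda\times\Si_\mu$-orbits of matchings, and the quotient map $Y^{\otimes d}\to S^{\lambda,\mu}$ is manifestly the projection to coinvariants under this identification, giving the claimed isomorphism.

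For the cases $(\PP_\kk,S^2)$ and $(\PP_\kk,\Lambda^2)$ with $p$ odd, the same scheme applies, using the Cauchy-type filtration of \cite{ABW} (as in the proof of \cite[Lm 3.1]{TouzeUniv}) to handle $S^2$ and $\Lambda^2$ in place of $Y=\otimes^2$, together with the corresponding variant of Lemma \ref{lm-calcul-prelim} identifying the single nonzero summand.

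The main obstacle will be the combinatorial bookkeeping: tracking the $\Si_\lambda\times\Si_\mu$-action through the sum-diagonal adjunction and the exponential decomposition, and verifying that it corresponds term-by-term to the permutation of matchings and hence to the quotient $Y^{\otimes d}\to S^{\lambda,\mu}$. Frobenius twists will pass through the adjunction without trouble, since $I^{(r)}$ is exact and commutes with direct sums and with the tensor-power constructions involved.
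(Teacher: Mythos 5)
The outer reductions in your plan (parametrization adjunction to reduce to $E=\kk$, and treating $S^2$, $\Lambda^2$ via $\otimes^2$) are in the spirit of what the paper does, but the core of your argument has a genuine gap: the claim that ``applying the same machinery to $S^{\lambda,\mu}$ yields a parallel decomposition indexed by $\Si_\lambda\times\Si_\mu$-orbits of matchings, and the quotient map is manifestly the projection to coinvariants'' is precisely the statement to be proved, and it does not follow from the Step~2 template. The sum-diagonal/exponential/annihilation argument of section \ref{subsubsec-Step2} works because the target there, $Y^{\otimes d\,(1)}$, separates into a tensor product of \emph{additive} (twisted-linear) factors $V_n^{(1)}$ in distinct variables; additivity is what kills all non-matching summands of the exponential decomposition of the source. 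The factors $S^{\lambda_i\,(r)}$ and $S^{\mu_j\,(r)}$ of your new target are not additive, so the cross terms do \emph{not} vanish, and each surviving summand $\Ext^*_{\mathcal F}(\Gamma^{\nu}X, S^{\lambda,\mu\,(r)})$ is exactly the hard twisted $\Ext$-computation of \cite{FFSS} — not something an annihilation lemma disposes of. (Already for $\lambda=\mu=(2)$ there is a single variable of each variance, the matching combinatorics is empty, and your scheme produces nothing.) Note also that $\Si$-coinvariants are not exact in characteristic $p$, so there is no formal reason for $\HH^*_{E,X}(-)$ to commute with passing from $Y^{\otimes d}$ to its quotient $S^{\lambda,\mu}$; that commutation is the content of the proposition. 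The paper does not reprove this: for $E=\kk$ it quotes the known result that $\Ext^*_{\PP_\kk}(\Gamma^{\lambda\,(r)},S^{\mathbf 1\,(r)})\to\Ext^*_{\PP_\kk}(\Gamma^{\lambda\,(r)},S^{\mu\,(r)})$ becomes an isomorphism after taking $\Si_\mu$-coinvariants (\cite[Thm 4.5]{FFSS} as explained in \cite{Chalupnik1}, or \cite[Cor 4.7]{TouzeENS}), translated to bifunctors by \cite[Thm 1.5]{FF}, and then reduces general $E$ via the lower/upper parametrization adjunction $\HH^*_{E,\mathrm{gl}}(B)\simeq\HH^*_{\mathrm{gl}}(B_E)$ together with the splitting of $(S^{\lambda,\mu\,(r)})_E$ into symmetric tensors, and reduces the $S^2$, $\Lambda^2$ cases to $\mathrm{gl}$ via $\HH^*_{E,\otimes^2}(F)\simeq\HH^*_{E,\mathrm{gl}}(F_\boxplus)$.

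Two smaller points. First, even for the source side $\HH^*_{E,X}(Y^{\otimes d\,(r)})$ your ``annihilation lemma analogous to Lemma \ref{lm-annul2}'' cannot be proved by the same retract trick once $r\ge 2$: matrix entries between $p$ and $p^r-1$ occur, $\Gamma^k\subset\otimes^k$ no longer splits, and one must instead invoke additivity of $I^{(r)}$ (this is repairable, but it is not ``analogous'' in proof). Second, for the functor cases your appeal to the Cauchy filtration and a variant of Lemma \ref{lm-calcul-prelim} is vaguer than needed; the clean route is the one above, reducing to the bifunctor case, since $S^{\mu\,(r)}_{\boxplus}$ again decomposes into bifunctor symmetric tensors.
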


The remainder of the section is devoted to the proof of proposition \ref{prop-step-3}.
Assume first that $(\mathcal{F},X)=(\mathcal{P}_\kk(1,1),\mathrm{gl})$. If $E=\kk$ (concentrated in degree zero) then the statement follows from the results of \cite{FFSS}, or alternatively of  \cite{TouzeENS}. To be more specific, let $\mathbf{1}=(1,\dots,1)$ with `$1$' repeated $d$ times, hence $\otimes^d=S^{\mathbf{1}}$. The map $\HH^*_{\mathrm{gl}}(S^{\mu,\mathbf{1}\,(r)})\to \HH^*_{\mathrm{gl}}(S^{\mu,\lambda\,(r)})$ identifies \cite[Thm 1.5]{FF}
with the map $\Ext^*_{\PP_\kk}(\Gamma^{\lambda\,{(r)}},S^{\mathbf{1}\,(r)})\to \Ext^*_{\PP_\kk}(\Gamma^{\lambda\,{(r)}},S^{\mu\,(r)})$. The latter becomes an isomorphism after taking coinvariants under the action of $\Si_\mu$ at the source. This follows from \cite[Thm 4.5]{FFSS}, as explained in \cite{Chalupnik1}. An alternative proof without spectral sequences is given in \cite[Cor 4.7]{TouzeENS}. There is a similar result for the map $\HH^*_{\mathrm{gl}}(S^{\mathbf{1},\mathbf{1}\,(r)})\to \HH^*_{\mathrm{gl}}(S^{\mu,\mathbf{1}\,(r)})$. Thus we obtain the isomorphism in two steps:
$$\HH^*_{\mathrm{gl}}(S^{\lambda,\mu\,(r)})\simeq  \HH^*_{\mathrm{gl}}(S^{\lambda,\mathbf{1}\,(r)})_{\Si_\mu}\simeq (\HH^*_{\mathrm{gl}}(S^{\mathbf{1},\mathbf{1}\,(r)})_{\Si_\mu})_{\Si_\lambda}=\HH^*_{\mathrm{gl}}(S^{\mathbf{1},\mathbf{1}\,(r)})_{\Si_\lambda\times \Si_\mu}\;.$$

Now we prove the case of an arbitrary $E$. For this purpose we recall parametrizations of bifunctors by graded vector spaces. If $E$ is a graded vector space of finite total dimension, there is an exact lower parametrization functor \cite[Section 3.1]{TouzeUnivNew}
 $$\begin{array}{cccc}
-_E: &\PP_\kk(1,1)&\to & \PP_\kk(1,1)^*\\
& B & \mapsto & B_E
\end{array}$$ 
where $B_E$ is the bifunctor $(V,W)\mapsto B(V,(\bigoplus_{i} E^i)\otimes W)$, and the grading may be defined in the same fashion as in the functor case (explained just before theorem \ref{thm-Tformula}). Similarly, there is an exact upper parametrization functor 
 $$\begin{array}{cccc}
-^E: &\PP_\kk(1,1)&\to & \PP_\kk(1,1)^*\\
& B & \mapsto & B^E
\end{array}$$
where $B^E(V,W)=B(V,\mathrm{Hom}_\kk(\bigoplus_i E^i,W))$. Lower and upper parametrizations are adjoint, that is there is an isomorphism of graded $\kk$-vector spaces, natural with respect to $F$, $G$ and compatible with tensor products:
$$\mathrm{Hom}_{\mathcal{P}_\kk(1,1)}(F^E,G)\simeq \mathrm{Hom}_{\mathcal{P}_\kk(1,1)}(F,G_E)\;.$$
This property is easily checked when $F$ is a standard projective and $G$ is a standard injective by using the Yoneda lemma, and the general isomorphism follows by taking resolutions. Moreover as parametrization functors are exact, the adjunction isomorphism induces a similar adjunction on the $\Ext$-level. In particular, we have a graded isomorphism, natural with respect to $B$:
$$\HH^*_{E, \mathrm{gl}}(B)\simeq \HH^*_{\mathrm{gl}}(B_E)\;.$$
Since the bifunctor $(S^{\lambda,\mu\,(r)})_E$ splits as a direct sum of bifunctors of the form $S^{\nu,\mu\,(r)}$, it is now easy to prove that proposition \ref{prop-step-3} holds.   

Finally we prove the cases $(\mathcal{F},X)=(\PP_\kk,S^2)$ or $(\PP_\kk,\Lambda^2)$ and $p$ is odd. In these cases, $X$ is a direct summand of $\otimes^2$, hence the graded functor $\HH^*_{E, X}(-)$ is a direct summand of $\HH^*_{E, \otimes^2}(-)$. Hence it suffices to prove the isomorphism for $X=\otimes^2$. By using sum-diagonal adjunction and $\kk$-linear duality as in the proof of \cite[Thm 6.6]{TouzeClass} we obtain isomorphisms natural with respect to the functor $F$:
$$\HH^*_{E, \otimes^2}(F)\simeq \HH^*_{E, \mathrm{gl}}(F_{\boxplus})\;, $$
where $F_{\boxplus}$ denotes the bifunctor $(V,W)\mapsto F(V^\sharp\oplus W)$. But $S^{\mu\,(r)}_{\boxplus}$ decomposes as a direct sum of bifunctors of the form $S^{\lambda,\nu\,(r)}$. Thus the statement for $(\mathcal{F},X)=(\mathcal{P}_\kk,\otimes^2)$ can be deduced from the one for $(\mathcal{F},X)=(\PP_\kk(1,1),\mathrm{gl})$.

\end{document}